%%%%%%%%%%%%%%%%%%%%%%%%%%%%%%%%%
%somint-3.tex, May  2009
%p-spectrum and collapsing of connected sums
%Ann\'e and Takahashi
%%%%%%%%%%%%%%%%%%%%%%%%%%%%%%%%%%
\documentclass[12pt,reqno]{amsart}
\usepackage{a4}
\usepackage{amsmath}
\usepackage{amssymb}
\usepackage{amscd}                  % commutative diagrams
\usepackage{verbatim}

\newcounter{theorem}

\newtheorem{cor}[theorem]{Corollary}
\newtheorem{lem}[theorem]{Lemma}

\newtheorem{pro}[theorem]{Proposition}

\newtheorem{rem}[theorem]{Remark}

\newcounter{intro}

\newtheorem{theo}[intro]{Theorem} 
\newtheorem{prop}[intro]{Proposition}
\newtheorem{coro}[intro]{Corollary}
\newcounter{remintro}

\newtheorem{remi}[remintro]{Remark 1.\!\!}

\numberwithin{equation}{section}

%%%%%%%%%%%%%%%%%%%%%%%%%%%%%%%%%%%%%%%%%%%%%%%%%%%%%%%%%%%%%%%%%%%%%%%%
\newcommand{\cref}[1]{Corollary~\ref{#1}}

\newcommand{\pref}[1]{Proposition~\ref{#1}}

\newcommand{\tref}[1]{Theorem~\ref{#1}}

%%%%%%%%%%%%%%%%%%%%%%%%%%%%%%%%%%%%%%%%%%%%%%%%%%%%%%%%%%%%%%%%%%%%%%%%
\newcommand{\iprod}[3][{}]{\langle{#2},{#3}\rangle_{#1}}  % inner product
\newcommand{\bigiprod}[3][{}]{\bigl\langle{#2},{#3}\bigr\rangle_{#1}}

\newcommand{\tir}{\discretionary{.}{}{---\kern.7em}}

\newcommand\R{\mathbb R}

\newcommand{\Sphere}{\mathbb S} % shortcut
\newcommand{\Sn}{{\mathbb S}^n}
  \newcommand\mcC{\mathcal C}

\newcommand\mcD{\mathcal D}

%%%%%%%%%%%%%%%%%%%%%%%%%%%%%%%%%%%%%%%%%%%%%%%%%%%%%%%%%%%%%%%%%%%%%%%%
\newcommand{\dom}{\operatorname{dom}}
\newcommand{\Dom}{\operatorname{Dom}}
\newcommand\bargamma{{\bar{\gamma}}}
\newcommand\la{\langle} \newcommand\ra{\rangle}
\newcommand{\eps}{\varepsilon} % shortcut
\newcommand{\e}{\mathrm e} % Euler's constant
\renewcommand{\phi}{\varphi}   % shortcut
 % complex unit
\newcommand{\vol}{\text{vol}}
           % shortcut
    % font for quadratic forms (gothique)

\renewcommand{\Im}{\operatorname{Im}}
\newcommand{\Ker}{\operatorname{Ker}}

%%%%%%%%%%%%%%%%%%%%%%%%%%%%%%%%%%%%%%%%

%single space

%double space
%\renewcommand{\baselinestretch}{2.0}

\setlength{\topmargin}{-25pt}
\setlength{\oddsidemargin}{0.5cm}
\setlength{\evensidemargin}{0.5cm}
\setlength{\textheight}{23.7cm}
\setlength{\textwidth}{15cm}

%%%%%%%%%%%%%%%%%%%%%%%%%%%%%%%%%%%%%
\begin{document}
\title[collapsing of connected sums]
{$p$-spectrum and collapsing of connected sums \\
\vspace{0.3cm}
 {\small  ($p$-spectre et effondrement de sommes connexes) }
}

\date{\today}

\author{Colette ANN\'E}
\address{Laboratoire de Math\'eatiques Jean Leray, CNRS-Universit\'e de Nantes,  Facult\'e des Sciences, BP 92208, 44322 Nantes, France}
\email{colette.anne@univ-nantes.fr}
\author{Junya TAKAHASHI}
\address{Division of Mathematics, Graduate School of Information Sciences, 
  T\^ohoku University, Aoba 6-3-09, Sendai, 980-8579, Japan}
\email{junya@math.is.tohocu.ac.jp}
%\date{\today, \emph{File: }\texttt{\jobname.tex}}
%%%%%%%%%%%%%%%%%%%%%%%%%%%%%%
\vspace{3cm}
\begin{abstract}
 The goal of the present paper is to calculate the limit of spectrum of the 
 Hodge-de Rham operator under the perturbation of collapse of one part of 
 a connected sum. It takes place in the general problem of blowing up
conical singularities introduced in \cite{Maz} and \cite{Row}.

Le but de ce travail est de calculer la limite du spectre de l'op\'eateur 
 de Hodge-de Rham dans la perturbation obtenue par effondrement d'une moiti\'e
  d'une somme connexe. 
Ce probl\`eme rentre dans le cadre de l'{\'e}clatement des singularit\'es
 coniques introduites dans \cite{Maz} et \cite{Row}.
\end{abstract}

\footnote[0]{$2000$ {\it Mathematics Subject Classification.} Primary $58J50$; Secondary $35P15,$ $53C23,$ $58J32$. 
 {\it Key Words and Phrases.} Hodge-Laplacian, differential forms, eigenvalue, 
collapsing of Riemannian manifolds, Atiyah-Patodi-Singer type boundary 
condition.} 

%%%%%%%%%%%%%%%%%%%%%%%%%%%%%%%%%%%%

\maketitle

\section{Introduction}

It is a common problem in differential geometry to study the limit of spectrum
of Laplace type operators under singular perturbations of the metrics, 
 especially for the Hodge-de Rham operator acting on differential forms.
 The first reason is the topological meaning of this operator and the fact that by singular perturbations of the metric one can change the topology of the 
manifold. Among a lot of works on this subject we must recall  the study of the
adiabatic limit, started by Mazzeo and Melrose in \cite{MM} and developped by
many authors, we can mention also the collapse of  thin handles started in 
\cite{AC2} and accomplished in \cite{ACP}.

The singular perturbation we study here takes place in the general framework
 of resolution blowups presented in \cite{Maz}, our situation is the collapse 
 of one part in a connected sum, which is explained by the following figure.

%%%%%%%%%%%%  Figure  fig:connected sum    %%%%%%%%%%%%%%%%%%%%%%%%%%%%%%%
\begin{figure}[h]  \label{fig:conn-sum}

\vspace*{0.5cm}
%\hspace*{2.0cm}

%\input{conn-sum-fig.tex}

%WinTpicVersion3.08
\unitlength 0.1in
\begin{picture}( 41.4200,  9.9300)( 10.9800,-23.5200)
\special{pn 8}%
\special{pa 2194 1638}%
\special{pa 2174 1612}%
\special{pa 2156 1588}%
\special{pa 2136 1562}%
\special{pa 2114 1538}%
\special{pa 2092 1514}%
\special{pa 2070 1490}%
\special{pa 2046 1468}%
\special{pa 2020 1446}%
\special{pa 1994 1426}%
\special{pa 1966 1408}%
\special{pa 1938 1394}%
\special{pa 1910 1382}%
\special{pa 1880 1372}%
\special{pa 1848 1368}%
\special{pa 1818 1366}%
\special{pa 1786 1366}%
\special{pa 1754 1366}%
\special{pa 1722 1368}%
\special{pa 1690 1372}%
\special{pa 1656 1374}%
\special{pa 1624 1374}%
\special{pa 1590 1372}%
\special{pa 1556 1370}%
\special{pa 1524 1366}%
\special{pa 1490 1362}%
\special{pa 1458 1360}%
\special{pa 1426 1360}%
\special{pa 1394 1362}%
\special{pa 1364 1366}%
\special{pa 1336 1376}%
\special{pa 1308 1390}%
\special{pa 1282 1406}%
\special{pa 1256 1426}%
\special{pa 1230 1448}%
\special{pa 1206 1472}%
\special{pa 1184 1496}%
\special{pa 1162 1520}%
\special{pa 1142 1546}%
\special{pa 1126 1574}%
\special{pa 1114 1602}%
\special{pa 1104 1632}%
\special{pa 1100 1662}%
\special{pa 1098 1694}%
\special{pa 1100 1726}%
\special{pa 1102 1760}%
\special{pa 1106 1792}%
\special{pa 1112 1826}%
\special{pa 1120 1858}%
\special{pa 1126 1890}%
\special{pa 1134 1920}%
\special{pa 1142 1952}%
\special{pa 1152 1982}%
\special{pa 1164 2012}%
\special{pa 1176 2040}%
\special{pa 1190 2070}%
\special{pa 1206 2098}%
\special{pa 1224 2126}%
\special{pa 1244 2152}%
\special{pa 1266 2178}%
\special{pa 1288 2202}%
\special{pa 1312 2224}%
\special{pa 1338 2244}%
\special{pa 1364 2260}%
\special{pa 1392 2274}%
\special{pa 1420 2286}%
\special{pa 1450 2294}%
\special{pa 1480 2300}%
\special{pa 1510 2306}%
\special{pa 1542 2310}%
\special{pa 1574 2312}%
\special{pa 1608 2314}%
\special{pa 1642 2318}%
\special{pa 1676 2322}%
\special{pa 1712 2326}%
\special{pa 1746 2332}%
\special{pa 1782 2338}%
\special{pa 1818 2342}%
\special{pa 1852 2348}%
\special{pa 1886 2350}%
\special{pa 1918 2350}%
\special{pa 1948 2348}%
\special{pa 1978 2344}%
\special{pa 2004 2334}%
\special{pa 2028 2320}%
\special{pa 2048 2302}%
\special{pa 2066 2278}%
\special{pa 2082 2252}%
\special{pa 2096 2224}%
\special{pa 2108 2192}%
\special{pa 2120 2158}%
\special{pa 2130 2124}%
\special{pa 2138 2090}%
\special{pa 2146 2056}%
\special{pa 2154 2024}%
\special{pa 2160 1992}%
\special{pa 2168 1960}%
\special{pa 2174 1928}%
\special{pa 2182 1898}%
\special{pa 2188 1868}%
\special{pa 2194 1842}%
\special{sp}%
\special{pn 8}%
\special{pa 2194 1878}%
\special{pa 2176 1850}%
\special{pa 2160 1822}%
\special{pa 2146 1794}%
\special{pa 2134 1762}%
\special{pa 2136 1732}%
\special{pa 2148 1704}%
\special{pa 2166 1678}%
\special{pa 2192 1652}%
\special{pa 2194 1650}%
\special{sp}%
% SPLINE 2 0 3 0
\special{pn 8}%
\special{pa 2194 1650}%
\special{pa 2216 1626}%
\special{pa 2240 1604}%
\special{pa 2266 1584}%
\special{pa 2292 1570}%
\special{pa 2322 1564}%
\special{pa 2354 1560}%
\special{pa 2386 1560}%
\special{pa 2420 1562}%
\special{pa 2456 1566}%
\special{pa 2490 1572}%
\special{pa 2524 1578}%
\special{pa 2558 1586}%
\special{pa 2590 1596}%
\special{pa 2620 1610}%
\special{pa 2646 1624}%
\special{pa 2670 1644}%
\special{pa 2688 1666}%
\special{pa 2704 1692}%
\special{pa 2714 1722}%
\special{pa 2724 1754}%
\special{pa 2730 1788}%
\special{pa 2736 1822}%
\special{pa 2742 1856}%
\special{pa 2742 1888}%
\special{pa 2736 1918}%
\special{pa 2722 1944}%
\special{pa 2700 1966}%
\special{pa 2670 1984}%
\special{pa 2638 1998}%
\special{pa 2606 2008}%
\special{pa 2574 2014}%
\special{pa 2542 2016}%
\special{pa 2508 2016}%
\special{pa 2476 2014}%
\special{pa 2446 2008}%
\special{pa 2414 2000}%
\special{pa 2384 1988}%
\special{pa 2356 1976}%
\special{pa 2326 1962}%
\special{pa 2298 1946}%
\special{pa 2272 1930}%
\special{pa 2244 1912}%
\special{pa 2218 1894}%
\special{pa 2194 1878}%
\special{sp}%
% SPLINE 2 0 3 0
% 
\special{pn 8}%
\special{pa 1426 1866}%
\special{pa 1452 1886}%
\special{pa 1480 1904}%
\special{pa 1506 1918}%
\special{pa 1534 1930}%
\special{pa 1564 1936}%
\special{pa 1598 1938}%
\special{pa 1640 1934}%
\special{pa 1690 1928}%
\special{pa 1740 1920}%
\special{pa 1788 1910}%
\special{pa 1828 1900}%
\special{pa 1858 1890}%
\special{pa 1870 1882}%
\special{pa 1864 1878}%
\special{pa 1834 1878}%
\special{pa 1834 1878}%
\special{sp}%
\special{pn 8}%
\special{pa 1510 1902}%
\special{pa 1534 1882}%
\special{pa 1562 1862}%
\special{pa 1590 1848}%
\special{pa 1620 1838}%
\special{pa 1652 1834}%
\special{pa 1684 1836}%
\special{pa 1714 1848}%
\special{pa 1742 1864}%
\special{pa 1768 1884}%
\special{pa 1786 1902}%
\special{sp}%
\special{pn 8}%
\special{pa 2194 1638}%
\special{pa 2212 1664}%
\special{pa 2226 1692}%
\special{pa 2232 1724}%
\special{pa 2232 1758}%
\special{pa 2224 1788}%
\special{pa 2212 1818}%
\special{pa 2194 1846}%
\special{pa 2182 1866}%
\special{sp -0.045}%
\special{pn 8}%
\special{pa 4958 1640}%
\special{pa 4938 1614}%
\special{pa 4920 1590}%
\special{pa 4900 1564}%
\special{pa 4878 1540}%
\special{pa 4856 1516}%
\special{pa 4834 1492}%
\special{pa 4810 1470}%
\special{pa 4784 1448}%
\special{pa 4758 1428}%
\special{pa 4730 1410}%
\special{pa 4702 1396}%
\special{pa 4674 1384}%
\special{pa 4644 1374}%
\special{pa 4612 1370}%
\special{pa 4582 1368}%
\special{pa 4550 1368}%
\special{pa 4518 1368}%
\special{pa 4486 1370}%
\special{pa 4454 1374}%
\special{pa 4420 1376}%
\special{pa 4388 1376}%
\special{pa 4354 1374}%
\special{pa 4320 1372}%
\special{pa 4288 1368}%
\special{pa 4254 1364}%
\special{pa 4222 1362}%
\special{pa 4190 1362}%
\special{pa 4158 1364}%
\special{pa 4128 1368}%
\special{pa 4100 1378}%
\special{pa 4072 1392}%
\special{pa 4046 1408}%
\special{pa 4020 1428}%
\special{pa 3994 1450}%
\special{pa 3970 1474}%
\special{pa 3948 1498}%
\special{pa 3926 1522}%
\special{pa 3906 1548}%
\special{pa 3890 1576}%
\special{pa 3878 1604}%
\special{pa 3868 1634}%
\special{pa 3864 1664}%
\special{pa 3862 1696}%
\special{pa 3864 1728}%
\special{pa 3866 1762}%
\special{pa 3870 1794}%
\special{pa 3876 1828}%
\special{pa 3884 1860}%
\special{pa 3890 1892}%
\special{pa 3898 1922}%
\special{pa 3906 1954}%
\special{pa 3916 1984}%
\special{pa 3928 2014}%
\special{pa 3940 2042}%
\special{pa 3954 2072}%
\special{pa 3970 2100}%
\special{pa 3988 2128}%
\special{pa 4008 2154}%
\special{pa 4030 2180}%
\special{pa 4052 2204}%
\special{pa 4076 2226}%
\special{pa 4102 2246}%
\special{pa 4128 2262}%
\special{pa 4156 2276}%
\special{pa 4184 2288}%
\special{pa 4214 2296}%
\special{pa 4244 2302}%
\special{pa 4274 2308}%
\special{pa 4306 2312}%
\special{pa 4338 2314}%
\special{pa 4372 2316}%
\special{pa 4406 2320}%
\special{pa 4440 2324}%
\special{pa 4476 2328}%
\special{pa 4510 2334}%
\special{pa 4546 2340}%
\special{pa 4582 2344}%
\special{pa 4616 2350}%
\special{pa 4650 2352}%
\special{pa 4682 2352}%
\special{pa 4712 2350}%
\special{pa 4742 2346}%
\special{pa 4768 2336}%
\special{pa 4792 2322}%
\special{pa 4812 2304}%
\special{pa 4830 2280}%
\special{pa 4846 2254}%
\special{pa 4860 2226}%
\special{pa 4872 2194}%
\special{pa 4884 2160}%
\special{pa 4894 2126}%
\special{pa 4902 2092}%
\special{pa 4910 2058}%
\special{pa 4918 2026}%
\special{pa 4924 1994}%
\special{pa 4932 1962}%
\special{pa 4938 1930}%
\special{pa 4946 1900}%
\special{pa 4952 1870}%
\special{pa 4958 1844}%
\special{sp}%
\special{pn 8}%
\special{pa 4190 1868}%
\special{pa 4216 1888}%
\special{pa 4244 1906}%
\special{pa 4270 1920}%
\special{pa 4298 1932}%
\special{pa 4328 1938}%
\special{pa 4362 1940}%
\special{pa 4404 1936}%
\special{pa 4454 1930}%
\special{pa 4504 1922}%
\special{pa 4552 1912}%
\special{pa 4592 1902}%
\special{pa 4622 1892}%
\special{pa 4634 1884}%
\special{pa 4628 1880}%
\special{pa 4598 1880}%
\special{pa 4598 1880}%
\special{sp}%
\special{pn 8}%
\special{pa 4274 1904}%
\special{pa 4298 1884}%
\special{pa 4326 1864}%
\special{pa 4354 1850}%
\special{pa 4384 1840}%
\special{pa 4416 1836}%
\special{pa 4448 1838}%
\special{pa 4478 1850}%
\special{pa 4506 1866}%
\special{pa 4532 1886}%
\special{pa 4550 1904}%
\special{sp}%
% VECTOR 1 0 3 0
% 2 3033 1829 3549 1829
% 
\special{pn 13}%
\special{pa 3034 1830}%
\special{pa 3550 1830}%
\special{fp}%
\special{sh 1}%
\special{pa 3550 1830}%
\special{pa 3482 1810}%
\special{pa 3496 1830}%
\special{pa 3482 1850}%
\special{pa 3550 1830}%
\special{fp}%
\special{pn 8}%
\special{pa 4966 1706}%
\special{pa 4988 1684}%
\special{pa 5016 1668}%
\special{pa 5046 1662}%
\special{pa 5078 1662}%
\special{pa 5114 1666}%
\special{pa 5148 1674}%
\special{pa 5178 1686}%
\special{pa 5204 1702}%
\special{pa 5220 1728}%
\special{pa 5230 1758}%
\special{pa 5236 1792}%
\special{pa 5240 1824}%
\special{pa 5230 1852}%
\special{pa 5204 1872}%
\special{pa 5172 1884}%
\special{pa 5140 1888}%
\special{pa 5108 1888}%
\special{pa 5076 1880}%
\special{pa 5046 1868}%
\special{pa 5018 1854}%
\special{pa 4990 1836}%
\special{pa 4966 1820}%
\special{sp}%
% SPLINE 2 0 3 0
% 3 4965 1709 4953 1637 4953 1637
% 
\special{pn 8}%
\special{pa 4966 1820}%
\special{pa 4966 1820}%
\special{pa 4966 1820}%
\special{pa 4966 1820}%
\special{sp}%
% SPLINE 2 0 3 0
% 4 4965 1709 4965 1685 4953 1625 4953 1625
% 
\special{pn 8}%
\special{pa 4966 1710}%
\special{pa 4964 1678}%
\special{pa 4958 1646}%
\special{pa 4954 1626}%
\special{sp}%
% SPLINE 2 0 3 0
% 4 4953 1865 4953 1853 4953 1805 4953 1805
% 
\special{pn 8}%
\special{pa 4954 1866}%
\special{pa 4954 1834}%
\special{pa 4954 1806}%
\special{sp}%
\special{pn 8}%
\special{pa 4966 1818}%
\special{pa 4950 1788}%
\special{pa 4934 1760}%
\special{pa 4932 1732}%
\special{pa 4950 1708}%
\special{pa 4978 1686}%
\special{sp}%
\special{pn 8}%
\special{pa 4966 1686}%
\special{pa 4988 1708}%
\special{pa 5002 1738}%
\special{pa 5002 1770}%
\special{pa 4986 1798}%
\special{pa 4966 1818}%
\special{sp -0.045}%
% $\varepsilon \to 0$
\put(32.7300,-21.8900){\makebox(0,0){$\varepsilon \to 0$}}%
\special{pn 8}%
\special{pa 2326 1782}%
\special{pa 2348 1806}%
\special{pa 2372 1824}%
\special{pa 2402 1838}%
\special{pa 2434 1846}%
\special{pa 2468 1848}%
\special{pa 2500 1842}%
\special{pa 2530 1832}%
\special{pa 2558 1816}%
\special{pa 2584 1796}%
\special{pa 2590 1794}%
\special{sp}%
\special{pn 8}%
\special{pa 2398 1830}%
\special{pa 2424 1806}%
\special{pa 2450 1788}%
\special{pa 2478 1782}%
\special{pa 2506 1790}%
\special{pa 2536 1806}%
\special{pa 2554 1818}%
\special{sp}%
\special{pn 8}%
\special{pa 5036 1786}%
\special{pa 5066 1802}%
\special{pa 5096 1810}%
\special{pa 5126 1806}%
\special{pa 5154 1792}%
\special{pa 5174 1780}%
\special{sp}%
\special{pn 8}%
\special{pa 5078 1786}%
\special{pa 5108 1776}%
\special{pa 5138 1778}%
\special{pa 5162 1786}%
\special{sp}%
\end{picture}%

 \begin{center}
  \caption[collapsing of $M_{\varepsilon}$]{collapsing of $M_{\varepsilon}$}
 \end{center}
\end{figure}
%%%%%%%%%%%%%%%%%%%%%%%%%%%%%%%%%%%%%%%%%%%%%%%%%%%%%%%%%%%%%%%%%%%%%%%%

\vspace{- 0.8cm}

More precisely, if the manifold $M$, of dimension $m$, is the connected sum 
of $M_1$ and $M_2$ around the common point $p_0$, endowed with Riemannian 
metrics $g_1, g_2,$ then, for the collapse of one part of the connected 
 sum we study the dependence on $\eps\to 0$ for the manifold 
 $$ M_{\eps} := (M_1-B(p_0,\eps)) \cup \eps.(M_2-B(p_0,1)), $$ 
 where $\eps.(M_2 - B(p_0,1) )$ means $( M_2 - B(p_0,1), \eps^2 g_2)$.

To make this construction clear, we can suppose that the two metrics are flat 
around  the point $p_0,$ then the boundaries of $(M_1-B(p_0,\eps), g_1)$ and 
 $(M_2 - B(p_0,1), \eps^2 g_2)$ are isometric, and can be identified. One can 
then define geometrically  $M_{\eps}$ as a Riemannian manifold $C^\infty$ by 
part.

In the terminology of \cite{Maz}, $M_{\eps}$ is the resolution blowup of
 the (singular) space $M_1$ with $\Sn$ as {\em link} and $\widetilde{M}_2$ 
as {\em asymptotically conical manifold}, if $\widetilde{M}_2$ 
is the complete manifold obtained by gluing of the exterior of a ball in the 
 Euclidean space  to the boundary of $M_2 - B(p_0,1).$
In fact, Rowlett studies, in \cite{Row}, the convergence of the spectrum 
of generalized Laplacian on such a situation of blowup of one isolated 
conical singularity (Mazzeo presents more genral sinbularities in \cite{Maz}). 
Her result gives the convergence of the spectrum to
the spectrum of an operator on $M_1$, it requires an hypothesis on 
 $\widetilde{M}_2$. Our result is less general, applied only to the case of 
 the Hodge-de Rham operator, but it does not require this hypothesis and the 
limit spectrum takes care of $\widetilde{M}_2$, see Theorem \ref{C} below.

 Maybe, the more important interest of our study is that we introduce new 
 techniques: to solve this kind of problems, we have to identify a good 
 elliptic limit problem, this means for the $M_2$ part a good boundary 
problem on $M_2 - B(p_0,1)$ at the limit. It appears that, on difference 
 with the problem of thin handles in \cite{AC2} or the connected sum problem 
studied in \cite{T} for functions, this boundary problem is not a kind of local  but `global': we have to introduce a condition of the Atiyah-Patodi-Singer 
 (APS) type, as defined in \cite{APS}.

 Indeed, these APS boundary conditions are related to the Fredholm theory
 on the link $\widetilde{M}_2$, as explained by Carron in \cite{C}, details 
 are given below.

 We shall study the more general blowup of conical spaces in the future.
%%%%%%%%%%%%% 
\subsection{The results} 
 As mentioned above, the manifold $M$, of dimension $m \geq 3$ (there is no 
 problem in dimension 2), is the connected sum of two Riemannian manifolds 
 $(M_1,g_1)$ and $(M_2,g_2)$ around the common point $p_0,$  and we suppose 
 that the metrics are such that the boundaries of $(M_1 - B(p_0,\eps), g_1)$ 
 and $(M_2 - B(p_0,1), \eps^2 g_2)$ are isometric for all $\eps$ small enough. 
 As a consequence, $(M_1,g_1)$ is flat in a neighborhood of $p_0$ and 
 $\partial(M_2-B(p_0,1))$ is the standard sphere.
 Indeed one can write $g_1 = dr^2 + r^2 h(r)$ in the polar coordinates around 
$p_0\in M_1$ and the metric $h(r)$ on the sphere converges, as $r \to 0,$ to 
 the standard metric. 
 But if the boundaries of $(M_1 - B(p_0, \eps), g_1)$ and 
 $(M_2-B(p_0,1), \eps^2 g_2)$ are isometric for all $\eps$ small enough, then 
 $h(r)$ is constant for $r$ small enough, the conclusion follows.

 One can then define geometrically 
  $M_{\eps} := (M_1 - B(p_0,\eps)) \cup \eps. (M_2 - B(p_0,1))$
  as the connected sum obtained by the collapse of $M_2$ (the question of the 
metric on $M_{\eps}$ is discussed below).
 On such a manifold, the Gau{\ss}-Bonnet operator $D_\eps,$  Sobolev spaces  
and also the Hodge-de Rham operator $\Delta_\eps$ can be defined as follows 
 (the details are given in \cite{AC2}): on a manifold $X=X_1\cup X_2,$ 
 which is the union of two Riemannian manifolds with isometric boundaries, 
 if $D_1$ and $D_2$ are the Gau{\ss}-Bonnet ``$d+d^\ast$'' operators acting on
  the differential forms of each part, the quadratic form 
 $$ q(\phi) = \displaystyle \int_{X_1} | D_1(\phi \upharpoonright_{X_1})|^2
       \, d \mu_{X_1} + 
     \int_{X_2}  | D_2(\phi \upharpoonright_{X_2})|^2 \, d \mu_{X_2} $$
 is well-defined and closed on the domain
%%%%%%%%%%%
\begin{equation*}
 \Dom (q) := \{ \phi = (\phi_1,\phi_2) \in H^1(\Lambda T^\ast X_{1}) \times
   H^1(\Lambda T^\ast X_{2}) \, | \,
  \phi_1 \upharpoonright_{\partial X_1} \stackrel{L_2}{=}
  \phi_2 \upharpoonright_{\partial X_2}  \},
\end{equation*}
%%%%%%%%%%%%%
 where the boundary values $ \phi_i \upharpoonright_{\partial X_i}$ are 
considered in the sense of the trace operator, and on this space the total 
Gau\ss-Bonnet operator $D(\phi)=(D_1(\phi_1),D_2(\phi_2)) $ is defined and 
selfadjoint.
  For this definition, we have, in particular, to identify 
 $(\Lambda T^\ast X_{1}) \upharpoonright_{\partial X_1}$ and
 $(\Lambda T^\ast X_{2}) \upharpoonright_{\partial X_2}.$
  This can be done by decomposing the forms in tangential and normal parts 
(with inner normal), the equality above means then that the tangential parts
 are equal and the normal parts opposite. This definition generalizes the 
definition in the smooth case.

The Hodge-de Rham operator $(d+d^\ast)^2$ of $X$ is then defined as the 
operator obtained by the polarization of the quadratic form $q$. This gives 
compatibility conditions between $\phi_1$ and $\phi_2$ on the common boundary.
  We do not give details on these facts because, as remarked in the next 
section, it is sufficient to work with smooth metrics on $M.$

 The multiplicity of $0$ in the spectrum of $\Delta_\eps$ is given by the 
cohomology, it is  then independent of $\eps$ and can be related to the 
cohomology of each part by the Mayer-Vietoris argument. The point is to study 
the convergence of the other eigenvalues, the so-called 
{\em positive spectrum}, as $\eps \to 0.$
 The second author has shown in \cite{T2}, Theorem $4.4$, p.$21$, a result of boundedness
%%%%%%%%%%%%%%%%%%%%%%
\begin{prop}[Takahashi]\label{Taka}
 The superior limit of the $k$-th {\em positive } eigenvalue on $p$-forms of 
 $M_{\eps}$ is bounded, as $\eps \to 0$, by the $k$-th {\em positive } 
 eigenvalue on $p$-forms of $M_1.$ 
%% $$ \limsup_{\eps \to 0} \lambda^p_k (M_{\eps}) \le \lambda^p_k (M_1, g_1) $$
%%  for $k = 1,2, \ldots.$
\end{prop}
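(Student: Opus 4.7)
The natural strategy is the variational min-max. Let $h_p$ denote the dimension of the space of harmonic $p$-forms on $M_\eps$, which is $\eps$-independent by the Mayer--Vietoris remark already recalled. Fix an orthonormal family $\phi_1,\dots,\phi_k$ of eigen-$p$-forms of $\Delta_1$ associated with the first $k$ positive eigenvalues $\lambda_1^+\le\cdots\le\lambda_k^+$. Choose a smooth radial cut-off $\chi_\eps:M_1\to[0,1]$, depending only on $r=\mathrm{dist}(\cdot,p_0)$, with $\chi_\eps\equiv 0$ on $\overline{B(p_0,\eps)}$, $\chi_\eps\equiv 1$ outside $B(p_0,2\eps)$ and $|d\chi_\eps|\le C/\eps$; put $\widetilde\phi_i=\chi_\eps\phi_i$ on the $M_1$-part of $M_\eps$ and $\widetilde\phi_i=0$ on $\eps.(M_2\setminus B(p_0,1))$. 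Since $\chi_\eps$ vanishes on the gluing sphere $\partial B(p_0,\eps)$, the tangential and normal traces of $\widetilde\phi_i$ match (both being zero) across the boundary, so $\widetilde\phi_i\in\Dom(q_\eps)$.

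Using $d(\chi_\eps\phi_i)=d\chi_\eps\wedge\phi_i+\chi_\eps d\phi_i$, $d^\ast(\chi_\eps\phi_i)=-\iota_{\nabla\chi_\eps}\phi_i+\chi_\eps d^\ast\phi_i$ and the elementary inequality $|a+b|^2\le(1+\eta)|a|^2+(1+\eta^{-1})|b|^2$, one obtains
\[
q_\eps(\widetilde\phi_i)\le(1+\eta)\int_{M_1}\chi_\eps^2\bigl(|d\phi_i|^2+|d^\ast\phi_i|^2\bigr)\,d\mu+C(1+\eta^{-1})\int_{M_1}|d\chi_\eps|^2|\phi_i|^2\,d\mu.
\]
The last integral is supported in an annulus of volume $O(\eps^m)$ on which $|d\chi_\eps|^2=O(\eps^{-2})$, and $\phi_i$ is smooth near $p_0$, so it is $O(\eps^{m-2})\to 0$ thanks to the hypothesis $m\ge 3$. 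Choosing $\eta(\eps)\to 0$ slowly enough yields $q_\eps(\widetilde\phi_i)\to\lambda_i^+(\Delta_1)$, $\|\widetilde\phi_i\|^2_{L^2(M_\eps)}\to 1$, and by polarisation $\langle\widetilde\phi_i,\widetilde\phi_j\rangle_{L^2(M_\eps)}\to\delta_{ij}$ together with $\langle D\widetilde\phi_i,D\widetilde\phi_j\rangle_{L^2(M_\eps)}\to\lambda_i^+\delta_{ij}$.

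A similar integration by parts, exploiting $\Delta_1\phi_i=\lambda_i^+\phi_i$ and $dh=d^\ast h=0$ for any harmonic $p$-form $h$ of $M_\eps$ (the boundary contributions at $\partial B(p_0,\eps)$ vanishing because $\chi_\eps=0$ there), yields the near-orthogonality bound $|\langle\widetilde\phi_i,h\rangle|\le C\eps^{m/2-1}\|h\|_{L^2(M_\eps)}$, so the $L^2$-projection $P_\eps\widetilde\phi_i$ onto $\ker\Delta_\eps$ has norm $O(\eps^{m/2-1})\to 0$. Setting $\widetilde\phi_i^\perp=\widetilde\phi_i-P_\eps\widetilde\phi_i$, the family $\{\widetilde\phi_i^\perp\}$ still has Gram matrix and energy matrix converging respectively to $I$ and $\mathrm{diag}(\lambda_i^+)$, so for $\eps$ small it spans a $k$-dimensional subspace $V_\eps\subset(\ker\Delta_\eps)^\perp$. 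Inserting $V_\eps$ in the min-max characterisation
\[
\lambda_k^+(\Delta_\eps)=\inf_{\substack{W\subset(\ker\Delta_\eps)^\perp\\\dim W=k}}\ \sup_{0\neq v\in W}\frac{q_\eps(v)}{\|v\|_{L^2(M_\eps)}^2}
\]
gives $\lambda_k^+(\Delta_\eps)\le\lambda_k^+(\Delta_1)+o(1)$, whence the stated $\limsup$.

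\emph{Main obstacle.} The only genuinely delicate point is the near-orthogonality estimate of the cut-off test forms to the a priori unknown harmonic forms of $M_\eps$; its proof combines the integration-by-parts identity driven by the eigenvalue equation $\Delta_1\phi_i=\lambda_i^+\phi_i$ with the annular volume bound $\mathrm{vol}(B(p_0,2\eps)\setminus B(p_0,\eps))=O(\eps^m)$, and this is precisely the step in which the dimension hypothesis $m\ge 3$ plays an essential role.
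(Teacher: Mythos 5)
Your argument is correct. Note first that the paper offers no proof of this proposition at all: it is quoted from the second author's earlier work \cite{T2} (Theorem 4.4 there), so there is nothing internal to compare against; your write-up is a self-contained reconstruction of the standard transplantation argument that the cited reference carries out. All the steps check out: the cut-off test forms $\chi_\eps\phi_i$ extended by zero do satisfy the trace-matching condition defining $\Dom(q_\eps)$, the error $\int|d\chi_\eps|^2|\phi_i|^2=O(\eps^{m-2})$ vanishes precisely because $m\ge 3$ (consistent with the paper's standing hypothesis, and with its remark that dimension $2$ is excluded), and you correctly identify and handle the one genuinely delicate point for the \emph{positive} spectrum, namely that the test space must be pushed into $(\Ker\Delta_\eps)^\perp$; your integration-by-parts bound $|\langle\widetilde\phi_i,h\rangle|=O(\eps^{m/2-1})\|h\|$, combined with the $\eps$-independence of $\dim\Ker\Delta_\eps$ coming from the Hodge theorem and Mayer--Vietoris, makes that projection harmless. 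The only cosmetic caveat is that your $P_\eps$ (projection onto the kernel) clashes with the prolongation operator the paper later calls $P_\eps$; rename it if this text is to sit inside the paper.
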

%%%%%%%%%%%%%%%%%%%%%

We show here that it is also true for the lower bound. 
Let $\phi_\eps $ be a family of eigenforms on $M_{\eps}$ of degree $p$ for 
the Hodge-de Rham operator:
$$
 \Delta_\eps \phi_\eps = \lambda^p (M_{\eps}) \phi_\eps \; ;\;
  \lim_{\eps \to 0} \lambda^p (M_{\eps}) = \lambda^p < + \infty.
$$
%%%%%%%%%%%%%%%%%
\begin{prop}\label{B}
 If $\lambda^p (M_{\eps}) \neq 0,$ then $\lambda^p \neq 0$ and $\lambda^p$
  belongs to the spectrum of the Hodge-de Rham operator on $(M_1, g_1).$
\end{prop}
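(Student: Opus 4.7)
The plan is to normalize and compactify: take $\|\phi_\eps\|_{L^2(M_\eps)} = 1$. \pref{Taka} gives a uniform bound $\lambda^p(M_\eps) \leq \Lambda$, so the energy $\|D_\eps \phi_\eps\|_{L^2}^2 = \lambda^p(M_\eps)$ is bounded as well. For any fixed $r > 0$ and $\eps < r$, the set $M_1 \setminus \overline{B(p_0,r)}$ sits inside the unperturbed part of $M_\eps$, where $\phi_\eps$ solves the fixed elliptic equation $\Delta_{g_1}\phi_\eps = \lambda^p(M_\eps)\phi_\eps$. Interior elliptic regularity, Rellich compactness, and a diagonal extraction as $r \to 0$ then produce a subsequence with $\phi_\eps \to \phi_1$ in $L^2_{\mathrm{loc}}(M_1 \setminus \{p_0\})$ satisfying $\Delta_{g_1}\phi_1 = \lambda^p \phi_1$ and $\|\phi_1\|_{L^2(M_1)} \leq 1$. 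Since $\dim M_1 = m \geq 3$, the isolated point $p_0$ is a removable singularity for $L^2$-solutions of $(\Delta_{g_1} - \lambda^p)u = 0$, so $\phi_1$ extends smoothly to all of $M_1$.

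Once $\phi_1 \not\equiv 0$, the eigenvalue $\lambda^p$ lies in $\spec(\Delta_{g_1})$. Positivity then follows from an orthogonality argument: $\phi_\eps \perp \Ker\Delta_\eps$, and any harmonic $p$-form $\omega$ on $M_1$ admits an $\eps$-family of approximate harmonic extensions $\omega_\eps$ to $M_\eps$ (cut off $\omega$ near $p_0$ and correct by a small error controlled by the scale of collapse); passing to the limit in $\la \phi_\eps, \omega_\eps\ra_{L^2(M_\eps)} = 0$ gives $\phi_1 \perp \Ker\Delta_{g_1}$, and hence $\lambda^p \neq 0$.

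The main obstacle is therefore to prove $\phi_1 \not\equiv 0$. Suppose for contradiction that $\phi_1 \equiv 0$; then $\|\phi_\eps\|_{L^2(M_1 \setminus B(p_0,r))} \to 0$ for every $r > 0$, so essentially all the mass accumulates in the collapsed half. Undo the scaling: let $\psi_\eps$ be the pullback of $\phi_\eps$ restricted to $\eps\cdot(M_2 - B(p_0,1))$ under the homothety, viewed on $(M_2 - B(p_0,1), g_2)$ with the natural conformal weight on $p$-forms, so that $\|\psi_\eps\|_{L^2(M_2)} \to 1$ and $\Delta_{g_2}\psi_\eps = \eps^2\lambda^p(M_\eps)\,\psi_\eps$. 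A limit $\psi_\infty$ exists along a subsequence and is harmonic for $\Delta_{g_2}$ on $M_2 - B(p_0,1)$; the trace of $\phi_\eps$ on the junction sphere tends to zero, so the corresponding trace of $\psi_\infty$ is constrained to lie in the subspace of boundary values admitting an $L^2$ harmonic extension over the Euclidean exterior, producing a non-zero $L^2$ harmonic $p$-form on the asymptotically conical manifold $\widetilde M_2$.

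The contradiction then comes from the APS/Fredholm picture of \cite{C} on $\widetilde M_2$: the vanishing of the junction boundary data inherited from $\phi_1 \equiv 0$ imposes the APS-type spectral projection condition on the link $\mathbb S^{m-1}$ under which no non-zero $L^2$ harmonic $p$-form can exist. Turning the soft convergence ``the trace of $\phi_\eps$ on the junction sphere tends to zero'' into this genuine APS constraint on $\psi_\infty$, thereby contradicting $\|\psi_\infty\|_{L^2} = 1$, is the technical heart of the argument and the very reason for the Atiyah--Patodi--Singer boundary condition advertised in the introduction.
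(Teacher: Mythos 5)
Your overall architecture (normalize, extract a limit on $M_1\setminus\{p_0\}$, remove the point singularity, and reduce everything to showing $\phi_1\not\equiv 0$ by ruling out mass concentration on the collapsing side) is parallel to the paper's, but the step you yourself identify as the technical heart is wrong as stated. You claim that the APS-type condition inherited from the vanishing junction data forbids any non-zero $L^2$ harmonic $p$-form on $\widetilde M_2$. The paper proves exactly the opposite: the space of $L^2$ harmonic forms on $\widetilde M_2$ \tir equivalently $\Ker\mcD_2$, where $\mcD_2$ is the Gau{\ss}-Bonnet operator on $M_2(1)$ with the boundary condition $\Pi_{<0}\circ U=0$ \tir is isomorphic to $H^p(M_2;\R)$ (Corollary \ref{coho2}), and is therefore non-trivial whenever $M_2$ carries cohomology in degree $p$, which is the generic case. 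So there is no contradiction to be extracted from non-existence, and your proof that $\phi_1\not\equiv 0$ breaks down precisely there.

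The mechanism the paper actually uses is orthogonality, not emptiness. One shows (Proposition \ref{phi2}) that the rescaled restriction $\phi_{2,\eps}$ differs by an $O(\sqrt\eps)$ correction from a family $\psi_{2,\eps}$ in the domain of $\mcD_2$ with $\|D_2\psi_{2,\eps}\|=O(\sqrt\eps)$, so any $L^2$ limit lies in $\Ker\mcD_2$. Each element of $\Ker\mcD_2$ is then transplanted back to $M_\eps$ by the explicit extension $P_\eps$ (decaying like $r^{-\gamma}$ with $\gamma\geq n/2$ on the cone), producing test forms whose energy is $O(\eps^{n-1})$ and which are therefore $O(\eps)$-close to $\Ker\Delta_\eps$ thanks to the uniform spectral gap of Proposition \ref{McG}. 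Since $\phi_\eps\perp\Ker\Delta_\eps$, the limit of $\psi_{2,\eps}$ is orthogonal to $\Ker\mcD_2$ as well as contained in it, hence zero; this, not a vacuity statement, is what forces all the mass onto $M_1$. Two smaller omissions: the positivity $\lambda^p\neq 0$ is obtained in the paper directly from the uniform lower bound of Proposition \ref{McG} rather than from an orthogonality argument; and your $L^2_{\mathrm{loc}}$ compactness does not by itself exclude mass concentrating in the neck at scales between $\eps$ and $1$ \tir the paper needs the $\Pi_{<0}$/$\Pi_{>0}$ splitting, the boundary estimate of Proposition \ref{bord}, and the logarithmic cut-off $\xi_\eps$ to control that region.
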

%%%%%%%%%%%%%%%%%%

 The first point is a consequence of the application of the so-called 
 McGowan's  lemma; indeed $M_{\eps}$ has no small eigenvalues as is shown in 
Proposition \ref{McG} below.
 To prove the convergent part of the proposition, we shall decompose the 
eigenforms using the good control of the APS-boundary term.
 More precisely, there exists an elliptic extension $\mcD_2$ of the 
Gau\ss-Bonnet operator $D_2$ on $M_2(1) = M_2 - B(p_0,1)$ and a family 
$\psi_\eps$ bounded in  $H^1(M_1) \times \hbox{Dom} (\mcD_2)$ such that 
  $ \| \phi_\eps - \psi_\eps \| \to 0$ with $\eps$.

This extension is defined by {\em global} boundary conditions, the conditions 
of APS type, in relation with the works of Carron about operators non parabolic 
at infinity developped in \cite{C}, see proposition \ref{prop:D_2-elliptic}.

 If we make this construction for an orthonormal family of the $k$ first 
 eigenforms, we obtain, with the help of Proposition \ref{Taka}, 
 our main theorem
%%%%%%%%%%%%%%%%%%%
\begin{theo}\label{C}
  Let $M_{\eps} =(M_1 - B(p_0,\eps)) \cup \eps.(M_2 - B(p_0,1))$ be the 
 connected sum of the two Riemannian manifolds $M_1$ and $\eps. M_2$ of 
 dimension $m = n+1.$
 For $p \in \{ 1, \dots, n \},$
  let $ 0 < \lambda^p_1(M_1) \leq \lambda^p_2(M_1), \dots $ be the {\em positive} spectrum of the Hodge-de Rham operator on the $p$-forms of $M_1$ and 
  $0 < \lambda^p_1(M_{\eps}) \leq \lambda^p_2(M_{\eps}), \dots $ 
 the {\em positive} spectrum of the Hodge-de Rham operator on the $p$-forms of
  $M_{\eps}.$ Then, for all $k \ge 1$ we obtain 
 $$
     \lim_{\eps \to 0} \lambda^p_k (M_{\eps}) = \lambda^p_k (M_1).
 $$
 Moreover, the multiplicity of $0$ is given by the cohomology and
 $$ H^p( M_{\eps}; {\mathbb R} ) \cong
     H^p(M_1; {\mathbb R}) \oplus H^p(M_2; {\mathbb R}).
 $$
\end{theo}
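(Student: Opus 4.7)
The two-sided estimate for the $k$-th positive eigenvalue reduces, thanks to the preceding propositions, to a limiting orthonormality argument. The upper bound $\limsup_{\eps \to 0} \lambda_k^p(M_\eps) \le \lambda_k^p(M_1)$ is already Takahashi's \pref{Taka}. For the lower bound, \pref{B} guarantees that every subsequential limit of $\lambda_k^p(M_\eps)$ is a positive eigenvalue of $\Delta_1$ on $(M_1, g_1)$; what we must add is a count, namely that these limits produce exactly the first $k$ positive eigenvalues of $M_1$ with correct multiplicity.

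\textbf{Producing $k$ independent limit eigenforms.} Fix $k$ and take orthonormal eigenforms $\phi_\eps^{(1)}, \dots, \phi_\eps^{(k)}$ on $M_\eps$ associated with $\lambda_1^p(M_\eps) \le \cdots \le \lambda_k^p(M_\eps)$. Along a common subsequence we obtain $\lambda_j^p(M_\eps) \to \mu_j$ with $0 < \mu_j \le \lambda_k^p(M_1)$, positivity following from the absence of small eigenvalues (\pref{McG}), and $\mu_j \in \spec(\Delta_1) \setminus \{0\}$ by \pref{B}. Applying the decomposition behind the proof of \pref{B} to each $\phi_\eps^{(j)}$ yields $\psi_\eps^{(j)}$ bounded in $H^1(\Lambda T^\ast M_1) \times \Dom(\mcD_2)$ with $\|\phi_\eps^{(j)} - \psi_\eps^{(j)}\|_{L^2(M_\eps)} \to 0$. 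By Rellich, the $M_1$-component of $\psi_\eps^{(j)}$ has a subsequential $L^2(M_1)$-limit $\psi^{(j)}$, and passing to the limit in the variational identity $q_\eps(\phi_\eps^{(j)}, \varphi) = \lambda_j^p(M_\eps)\,\langle \phi_\eps^{(j)}, \varphi\rangle$ against test forms $\varphi \in C_c^\infty(\Lambda T^\ast(M_1 \setminus \{p_0\}))$ identifies $\psi^{(j)}$ as an eigenform of $\Delta_1$ with eigenvalue $\mu_j$.

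\textbf{Transferring orthonormality and conclusion.} The collapsing side $\eps \cdot (M_2 - B(p_0,1))$ has volume $O(\eps^m)$, and the $\Dom(\mcD_2)$-bound on the $M_2$-component of $\psi_\eps^{(j)}$ controls any concentration of mass there; consequently the relation $\langle \phi_\eps^{(i)}, \phi_\eps^{(j)}\rangle_{L^2(M_\eps)} = \delta_{ij}$ passes to $\langle \psi^{(i)}, \psi^{(j)}\rangle_{L^2(M_1)} = \delta_{ij}$ in the limit. We thus obtain $k$ orthonormal eigenforms of $\Delta_1$ with eigenvalues $\mu_1 \le \cdots \le \mu_k \le \lambda_k^p(M_1)$, and the min-max principle on $M_1$ forces $\mu_j = \lambda_j^p(M_1)$ for every $j$. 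Since the subsequential limit is uniquely determined, $\lambda_k^p(M_\eps) \to \lambda_k^p(M_1)$. The cohomology statement is purely topological: $M_\eps$ is diffeomorphic to the smooth connected sum $M_1 \# M_2$, so Mayer--Vietoris with $U_i$ a neighbourhood of the $M_i$-side and $U_1 \cap U_2 \simeq \Sn$ delivers the claimed splitting for $1 \le p \le n$, since $H^p(\Sn) = 0$ in this range and puncturing $M_i$ at $p_0$ does not alter cohomology below the top degree.

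The principal difficulty lies in the orthonormality transfer of the third paragraph: $L^2(M_\eps)$-orthonormality of the $\phi_\eps^{(j)}$ does not a priori localise to $L^2(M_1)$, and one must invoke the global APS-type structure hidden inside $\Dom(\mcD_2)$ to rule out the eigenforms concentrating on the collapsing handle as $\eps \to 0$.
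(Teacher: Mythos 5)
Your overall architecture coincides with the paper's: the upper bound is \pref{Taka}, and the lower bound comes from passing an orthonormal family of the first $k$ eigenforms through the decomposition underlying \pref{B} and then applying min--max on $M_1$. (The paper phrases the last step as an induction on $k$, while you apply min--max directly to the span of the $k$ limit eigenforms; that difference is cosmetic.) The genuine gap is in your justification of the orthonormality transfer --- the very step you flag as the principal difficulty. The argument ``the collapsing side has volume $O(\eps^m)$, and the $\Dom(\mcD_2)$-bound controls concentration of mass there'' does not work: the $M_2$-component is renormalised by ${\phi_\eps}_{|M_2(1)}=\eps^{p-m/2}\phi_{2,\eps}$ precisely so that its contribution to $\|\phi_\eps\|^2_{L_2(M_\eps)}$ equals $\|\phi_{2,\eps}\|^2_{L_2(M_2(1))}$; the volume collapse is exactly cancelled. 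A bound in $\Dom(\mcD_2)$ only yields a subsequential $L_2$-limit, which a priori is a \emph{nonzero} element of $\Ker\mcD_2$. Indeed, mass does persist on the collapsing side for harmonic forms --- that is exactly where the $H^p(M_2;\R)$ summand of $H^p(M_\eps;\R)$ lives --- so any argument excluding concentration by volume alone proves too much.

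What is actually needed, and what the paper proves in the Lemma opening its section on the proof of \pref{B} (leading to \cref{0phi2}), is that $\phi_{2,\eps}\to 0$ in $L_2$ precisely \emph{because} $\lambda\neq 0$. The mechanism is global: decompose $\psi_{2,\eps}$ along $\Ker\mcD_2$ and its orthogonal complement; the orthogonal part is $O(\sqrt\eps)$ by the ellipticity of $\mcD_2$ together with $\|D_2\phi_{2,\eps}\|=O(\eps)$; the $\Ker\mcD_2$-part, transplanted to $M_\eps$ via the extension operator $P_\eps$, satisfies $q(\psi^0_\eps)=O(\eps^{n-1})$ and is therefore $O(\eps)$-close to $\Ker\Delta_\eps$ by \pref{McG}, so the orthogonality of the eigenform $\phi_\eps$ to $\Ker\Delta_\eps$ gives $\langle\phi_\eps,\psi^0_\eps\rangle=O(\sqrt\eps)$ and kills this part as well. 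Once \cref{0phi2} is in hand, your limits $\psi^{(j)}$ are orthonormal and the min--max step goes through; without it you cannot even conclude $\psi^{(j)}\neq 0$. The Mayer--Vietoris computation is as in the paper, modulo the small caveat that $H^n(\Sn)\neq 0$, so for $p=1$ and $p=n$ one must examine the connecting maps rather than quote $H^p(\Sn)=0$.
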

%%%%%%%%%%%%%%%
\begin{remi}
 The result of convergence of the positive spectrum is also true for $p=0$ and 
 has been shown in \cite{T}. 
 Naturally $H^0 (M_{\eps}; \R) \cong H^0(M_1; \R)= \R.$ 
 By the Hodge duality this solves also the case $p=m.$
\end{remi}
%%%%%%%%%%%%%%%
\subsection{Applications}
%%%%%%%%%%%%%%%
 Results on spectral convergence in singular situations can be used
to give examples or counter examples, concerning possible links
 between spectral and geometric properties. 
 For instance, Colbois and El Soufi have introduced in \cite{CE2} 
 the notion of {\em conformal spectrum} as the supremum, for each integer $k$,
 of the value of the  $k$-th eigenvalue on a conformal class of metrics with
  fixed volume. Using the result of \cite{T}, they could show that 
 the conformal spectrum of a compact manifold is always bounded from below by 
 the conformal spectrum of the standard sphere of the same dimension.

 In the same way, applying the \tref{C} to the case $M_1 = \Sphere^m$ and 
$M_2=M,$ we obtain
%%%%%%%%%%%%%%%
\begin{coro}
 Let $(M,g)$ be a compact Riemannian manifold of dimension $m$, for any degree 
 $p$, any integer $N \geq 1$ and any $\eps > 0,$ there exists on $M$ a metric 
 $\overline g$ conformal to $g$ such that the $N$ first positive eigenvalues 
 on the $p$-forms are $\eps$-close to the $N$ first positive eigenvalues 
 on the $p$-forms of the standard sphere with the same dimension and the same 
volume as  $(M,g)$.
\end{coro}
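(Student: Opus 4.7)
The plan is to apply \tref{C} with $M_1=\Sphere^m$ (standard round sphere of the same dimension as $M$) and $M_2=M$ endowed with the given metric $g$. Denoting the collapse parameter by $t$ (to avoid clash with the $\eps$ of the statement), the theorem yields $\lim_{t\to 0}\lambda^p_k(M_t)=\lambda^p_k(\Sphere^m)$ for each $k$. Hence, for the prescribed $\eps>0$ and $N$ one can pick $t_0>0$ so that for every $t<t_0$ the first $N$ positive $p$-form eigenvalues of $M_t$ lie within $\eps/2$ of those of $\Sphere^m$. Since $M_t$ is diffeomorphic to $\Sphere^m\#M=M$, each $M_t$ equips the smooth manifold $M$ with a Riemannian metric whose $p$-spectrum is close to the target; the remaining issue is that these metrics are not a priori conformal to $g$.

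The task is thus to realise $M_t$, up to a further controlled spectral perturbation, by a metric inside the conformal class $[g]$. For this I would exploit the conformal equivalence, via stereographic projection from $p_0$, between the round piece $(\Sphere^m-B(p_0,t),g_{\mathrm{round}})$ and a large Euclidean ball $(B_{\R^m}(0,R_t),|dx|^2)$ with $R_t\to\infty$ as $t\to 0$. Fix a point $q_0\in M$ and geodesic normal coordinates on a ball $B(q_0,\delta)$, in which $g_{ij}=\delta_{ij}+O(\delta^2)$. Then define a smooth positive $f_{t,\delta}:M\to(0,\infty)$ which on $B(q_0,\delta)$ is a scaled copy of the stereographic conformal factor $4R_t^{2}/(R_t^{2}+|x|^{2})^{2}$, and on $M\setminus B(q_0,\delta)$ is the constant $c_t^{2}$, the two pieces being smoothly interpolated across $\partial B(q_0,\delta)$ with matching boundary data. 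The metric $f_{t,\delta}\cdot g$ is conformal to $g$; on the inner ball it approximates the round $\Sphere^m-B(p_0,t)$, while on the outer region it is isometric to $c_t\cdot(M-B(q_0,\delta),g)$, which plays the role of the collapsed $t.M_2$-factor in \tref{C}. A final global homothety restores the total volume to $\vol(M,g)$ without leaving the conformal class, using that a rescaling by $\lambda^2$ multiplies every $p$-form eigenvalue by $\lambda^{-2}$ independently of $p$.

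The main obstacle is to control two error sources: the near-flatness defect $g_{ij}-\delta_{ij}=O(\delta^{2})$ in normal coordinates, and the smooth gluing region near $\partial B(q_0,\delta)$ where the conformal model and the rescaled outer piece are joined. I would handle both by a two-scale limit: first choose $\delta$ so small that, by the continuity of the Hodge-de Rham spectrum under smooth perturbations of the metric on a fixed compact manifold, the first $N$ positive eigenvalues of $(M,f_{t,\delta}\cdot g)$ differ from those of the idealised $M_t$ by at most $\eps/4$; then choose $t<t_0$ so small that \tref{C} gives $\eps/2$-closeness of the spectrum of $M_t$ to that of $\Sphere^m$. Combining these two approximations with the volume-fixing homothety produces the desired conformal metric $\overline g$ on $M$ whose first $N$ positive $p$-form eigenvalues lie within $\eps$ of those of the standard sphere of equal volume.
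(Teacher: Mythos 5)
Your proposal is correct and follows the route the paper intends: the paper's entire argument for this corollary is the single remark that one applies \tref{C} with $M_1=\Sphere^m$ and $M_2=M$, leaving the conformal realization of the degenerating family $M_t$ inside the class $[g]$ implicit, and your stereographic-projection construction (inner ball carrying the stereographic conformal factor, outer part a constant multiple $c_t^2 g$, final volume-normalizing homothety) supplies exactly that missing step in the standard Colbois--El Soufi manner. One sharpening: for your two-scale limit ("first choose $\delta$, then $t$") you need the $\delta$-error to be uniform in $t$, which generic "continuity of the spectrum under smooth perturbations" does not obviously give but Dodziuk's quasi-isometry estimate \eqref{eq:dod}, already quoted in Section~2 of the paper, does, since the ratio of $f_{t,\delta}\cdot g$ to the idealised metric of $M_t$ is controlled by $\sup_{B(q_0,\delta)}\lvert g_{ij}-\delta_{ij}\rvert=O(\delta^2)$ independently of $t$.
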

%%%%%%%%%%%%%%%%
\begin{remi}
 For the completion of the panorama on this subject, let us recall that 
Jammes has shown, in \cite{J}, that in dimension $m \geq 4$ the 
infimum of the $p$-spectrum in a conformal class, with fixed volume, is $0$ 
for $2 \leq p \leq m-2$ and $p \neq \frac{m}{2}$ but has a positive lower 
bound for $p=\frac{m}{2}$.
\end{remi}
%%%%%%%%%%%%

%%%%%%%%%%%%%%%
 Another example is the {\em Prescription of the spectrum}.
  This question was introduced by Colin de Verdi\`ere in \cite{CdV,CdV1} 
  where he shows that he can impose any finite part of the spectrum of the 
  Laplace-Beltrami operator on certain manifolds. 
  To this goal, he introduced a very powerful technique of transversality,
and shows that this hypothesis is satisfied on certain graphs and on certain 
manifolds \cite{CdV2}. 
  The other necessary argument is a theorem of convergence.
 The solution of the problem of prescription, with limitation concerning 
 multiplicity, has been given by Guerini in \cite{Gu} for the Hodge-de Rham 
 operator, and Jammes has proved a result of prescription, without 
 multiplicity, in a conformal class of the metric in \cite{J2}, 
 for certain degrees of the differential forms, what is compatible with 
 the retricted result mentioned above.  
  In this context, our result gives, for example, 
%%%%%%%%%%%%%%%
\begin{coro}
 Let $g_0$ be a metric on the sphere of dimension $m$. If $g_0$  satisfies 
the Strong Arnol'd
  Hypothesis, following the terminology of \cite{CdV}, for the eigenvalue 
  $\lambda \neq 0$ on differential forms of degree $p$ on the sphere, 
  then for any closed manifold $M,$ there exists a metric such that
   $\lambda$ belongs to the spectrum of the Hodge-de Rham operator on 
  $p$-forms with the same multiplicity.
\end{coro}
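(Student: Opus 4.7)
The plan is to invoke \tref{C} with $M_1=\Sphere^m$ equipped with the given metric $g_0$ and $M_2=M$ equipped with an arbitrary metric. Since the connected sum $\Sphere^m \# M$ is diffeomorphic to $M$, the collapse construction produces a family of metrics $g_\eps$ on $M$ whose positive $p$-spectrum converges, as $\eps\to 0$, to the positive $p$-spectrum of $(\Sphere^m,g_0)$. Writing $\lambda$ as $\lambda^p_k(\Sphere^m,g_0)=\cdots=\lambda^p_{k+m_0-1}(\Sphere^m,g_0)$ with multiplicity $m_0$ and separated from its neighbours, it follows that for $\eps$ small enough the eigenvalues $\lambda^p_k(M,g_\eps),\dots,\lambda^p_{k+m_0-1}(M,g_\eps)$ form a cluster of total multiplicity $m_0$ lying in an arbitrarily small neighbourhood of $\lambda$ and separated from the rest of the spectrum.

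Next, I would invoke Colin de Verdi\`ere's transversality machinery from \cite{CdV}. By assumption, the Strong Arnol'd Hypothesis is satisfied at $g_0$ for the eigenvalue $\lambda$ on $p$-forms, which amounts to saying that the map sending a metric to the $m_0\times m_0$ symmetric matrix representing its Hodge-de Rham operator on the $\lambda$-eigenspace is submersive at $g_0$. Since SAH is an open condition and since the eigenforms of $(M,g_\eps)$ in the cluster near $\lambda$ converge (via the approximants $\psi_\eps$ constructed in the proof of \tref{C}) to the $\lambda$-eigenforms of $(\Sphere^m,g_0)$, the Strong Arnol'd Hypothesis persists for $(M,g_\eps)$ when $\eps$ is small enough. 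Applying Colin de Verdi\`ere's implicit function argument to the family $g_\eps$ of metrics on $M$, we then obtain a $C^\infty$-small perturbation $\bar g$ of $g_\eps$ (supported away from the collapsing neck) for which the cluster collapses to a single eigenvalue equal exactly to $\lambda$ with multiplicity $m_0$.

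The main obstacle is the persistence of SAH under the collapse, i.e.\ verifying that the derivatives of the clustered eigenvalues of $(M,g_\eps)$ with respect to variations of the metric supported in the $M_1$-part remain close to the corresponding derivatives on $(\Sphere^m,g_0)$. This rests on a quantitative convergence of the spectral projectors, which itself follows from the fine control of the APS-type boundary term and the $H^1$-convergence of eigenforms developed in the proof of \tref{C}: once the operator-valued map $g\mapsto \Delta^p_{g}$ converges in operator norm on the relevant finite-dimensional spectral subspaces, the transversality condition transfers from $(\Sphere^m,g_0)$ to $(M,g_\eps)$ and Colin de Verdi\`ere's theorem may be applied on $M$ itself to conclude.
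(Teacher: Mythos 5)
Your proposal follows essentially the same route as the paper: apply \tref{C} with $M_1=\Sphere^m$ (metric near $g_0$) and $M_2=M$ to get convergence of the positive $p$-spectrum, then use Colin de Verdi\`ere's transversality machinery — the Strong Arnol'd Hypothesis guaranteeing that the map from metrics to the spectral quadratic form on the cluster near $\lambda$ still has $\lambda\cdot\mathrm{Id}$ in its image for $\eps$ small. The paper's own argument is exactly this (phrased as stability of the image of the spectral-quadratic-form map rather than ``persistence of SAH''), and your proposal is if anything more explicit about the uniformity/$C^1$-convergence issue that both treatments ultimately delegate to \cite{CdV2}.
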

%%%%%%%%%%%%%
 Indeed, we take a metric $g_2$ on $M$, and for any metric $g_1$ close to 
 $g_0$,  the positive spectrum of $M_\eps = \Sphere^m \# \eps .M$ converges,
  as $\eps \to 0$ to the spectrum of $\Sphere^m.$
  Then, the Strong Arnol'd Hypothesis assures that the map which associates to
 $g_1$ the {\em spectral quadratic form}  relative to a small interval $I$ 
 around  $\lambda$ has also, for $\eps$ small enough, the matrix 
  $\lambda \cdot \text{Id}$ in its image.

 Here, by spectral quadratic form, we mean the quadratic form defined by
the Hodge-de Rham operator, restricted to the eigenspace of eigenforms 
with eigenvalues in $I$. To consider this space as a space of matrix, we have 
to construct small isometries between the different eigenspaces, the details 
are in \cite{CdV2}.

This result could be used to prescribe high multiplicity for the spectrum of
the Hodge-de Rham operator. Recall that Jammes had obtained partial results
on this subject in \cite{J3}, his work is based on a convergence theorem 
(theorem 2.8) where the limit is the Hodge-de Rham operator with absolute 
boundary condition on a domain, he uses also the fact that the Strong Arnol'd 
Hypothesis is satisfied on spheres of dimension 2, as proved in \cite{CdV2}.
It would be interesting to obtain such a result on spheres of bigger 
dimension, the result of \cite{CdV2} uses the conformal invariance specific
to this dimension.

\subsubsection*{Acknoledgement}
 This work started with a visit of the second author at the Laboratory Jean 
Leray in Nantes. He is grateful to the Laboratory and the University of Nantes 
for hospitality. 
 
%%%%%%%%%%%%
 \vspace{0.5cm}
 We now proceed to prove the theorems. Let us first describe the metrics 
precisely. 

%%%%%%%%%%%%%%%%%%%%%%%%% 
\section{Choice of the metric}

 From now on, we denote
  $$ M_2(1) := M_2 - B(p_0,1). $$
 It is supposed here that the ball $B(p_0, 1)$ can really be embedded in the 
 manifold $M_2$, this can always be satisfied by a scaling of the metric $g_2$  on $M_2$.

 Recall that Dodziuk has proved in ~\cite[Prop.~3.3]{D} that 
 if two metrics $g, \; \overline{g}$ on the same compact manifold satisfy
%%%%%%%%%%%
\begin{equation}\label{eq:dod}
  \e^{-\eta} g \le \overline{g} \le \e^\eta g.
\end{equation}
%%%%%%%%%%%
 Then, the corresponding eigenvalues of the Hodge-de Rham operator acting on 
 $p$-forms satisfy
\begin{equation*}
  \e^{ -(n+2p) \eta} \lambda_k^p (g) \le  \lambda_k^p (\overline{g}) \le 
  \e^{ (n+2p) \eta}  \lambda_k^p (g).
\end{equation*}
 This result is based on the fact that the multiplicity of $0$ is given by the 
 cohomology and the positive spectrum by exact forms, hence the min-max formula  does not involve derivatives of the metric; it stays valid if one of the two 
 metrics is only smooth by part, because in the last case the Hodge decomposition still holds true.

 Then, for a metric  $g_1$ on $M_1$ there exists, for each $\eta > 0$ a metric 
 $\overline{g}_1$ on $M_1$ which is flat on a ball $B_\eta$ centered at $p_0$ and such that
$$
  \e^{-\eta} g_1\le \overline{g}_1 \le \e^\eta g_1.
$$
 Then our result can be extended to any other construction which does not suppose that the metric $g_1$ is flat in a neighborhood of $p_0$. 

 Now, we regard $M_{\eps}$ as the union of $M_1 - B(p_0, 3 \eps)$ and 
  $\eps. \overline{M}_2(1)$, where
 $\overline{M}_2(1) = \big( B_{\R^m}( 0,3) - B_{\R^m}(0, 1) \big) \cup M_2(1)$
 is endowed with a metric only smooth by part: the Euclidean metric on the
 first part and the restriction of $g_2$ on the second part.
 But this metric can be approached, as close as we want, by a smooth metric which is still flat  on $B_{\R^m}( 0, 3) - B_{\R^m} (0, \frac{3}{2})$ and these two metrics will satisfy the estimate \eqref{eq:dod}. 
 Thus, replacing $3 \eps$ by $\eps$ for simplicity, we can suppose, without loss of generality, that we are in the following situation:
%%%%%%%%%%%%%%
\vskip5pt
{\em 
 The manifold $M_2(1)$ is endowed with a metric which is conical $($flat$)$ near the boundary,  namely $g_2 = ds^2 + (1-s)^2 h$, $h$ being the canonical metric of   the sphere $\Sn = \partial(M_2(1))$, and $s \in [0, \frac{1}{2} )$ being the distance from the boundary $( M_2(1)$ looks like a trumpet $)$ and 
 $M_1(\eps) =M_1 - B(p_0, \eps)$ with a conical metric $g_1 = dr^2 + r^2 h$ 
 around the point $p_0.$
  Thus, $M_{\eps} = M_1(\eps) \cup \eps.M_2(1)$ is a smooth Riemannian manifold.  }

%%%%%%%%%%%%%%%%%%%%%% Figure:smoothing %%%%%%%%%%%%%%%
\begin{figure}[h]  \label{fig:smoothing}

% \input{smoothing-fig.tex}

%WinTpicVersion3.08
\unitlength 0.1in
\begin{picture}( 19.0100,  9.5700)( 16.6000,-20.8700)
\special{pn 8}%
\special{ar 2530 1580 50 274  4.7711448 6.2831853}%
\special{ar 2530 1580 50 274  0.0000000 4.7123890}%
\special{pn 8}%
\special{ar 2530 1580 50 274  4.7711448 6.2831853}%
\special{ar 2530 1580 50 274  0.0000000 4.7123890}%
\special{pn 8}%
\special{ar 3084 1584 24 202  1.3909428 4.8845798}%
\special{pn 8}%
\special{ar 2530 1580 50 274  4.7711448 6.2831853}%
\special{ar 2530 1580 50 274  0.0000000 4.7123890}%
\special{pn 8}%
\special{ar 2954 1588 38 212  4.7804654 6.2831853}%
\special{ar 2954 1588 38 212  0.0000000 4.7123890}%
\special{pn 8}%
\special{pa 2682 1320}%
\special{pa 2710 1336}%
\special{pa 2738 1350}%
\special{pa 2766 1364}%
\special{pa 2796 1376}%
\special{pa 2826 1384}%
\special{pa 2860 1392}%
\special{pa 2894 1398}%
\special{pa 2926 1396}%
\special{pa 2954 1386}%
\special{pa 2964 1380}%
\special{sp}%
\special{pn 8}%
\special{pa 2534 1306}%
\special{pa 2514 1282}%
\special{pa 2494 1256}%
\special{pa 2472 1232}%
\special{pa 2448 1210}%
\special{pa 2424 1192}%
\special{pa 2396 1174}%
\special{pa 2368 1160}%
\special{pa 2338 1146}%
\special{pa 2308 1134}%
\special{pa 2278 1130}%
\special{pa 2246 1132}%
\special{pa 2214 1134}%
\special{pa 2182 1132}%
\special{pa 2150 1132}%
\special{pa 2118 1134}%
\special{pa 2086 1136}%
\special{pa 2054 1140}%
\special{pa 2022 1146}%
\special{pa 1990 1152}%
\special{pa 1960 1160}%
\special{pa 1928 1170}%
\special{pa 1898 1180}%
\special{pa 1870 1194}%
\special{pa 1842 1208}%
\special{pa 1814 1226}%
\special{pa 1788 1244}%
\special{pa 1762 1262}%
\special{pa 1738 1284}%
\special{pa 1716 1306}%
\special{pa 1696 1334}%
\special{pa 1682 1362}%
\special{pa 1674 1392}%
\special{pa 1668 1424}%
\special{pa 1666 1456}%
\special{pa 1664 1488}%
\special{pa 1664 1520}%
\special{pa 1664 1552}%
\special{pa 1664 1584}%
\special{pa 1664 1616}%
\special{pa 1664 1648}%
\special{pa 1662 1682}%
\special{pa 1662 1714}%
\special{pa 1666 1746}%
\special{pa 1676 1774}%
\special{pa 1692 1800}%
\special{pa 1714 1826}%
\special{pa 1736 1850}%
\special{pa 1758 1878}%
\special{pa 1778 1902}%
\special{pa 1800 1926}%
\special{pa 1824 1944}%
\special{pa 1852 1958}%
\special{pa 1882 1964}%
\special{pa 1914 1968}%
\special{pa 1948 1972}%
\special{pa 1980 1976}%
\special{pa 2012 1982}%
\special{pa 2044 1990}%
\special{pa 2074 1996}%
\special{pa 2106 2002}%
\special{pa 2136 2004}%
\special{pa 2168 2004}%
\special{pa 2200 2004}%
\special{pa 2232 2002}%
\special{pa 2268 2002}%
\special{pa 2302 2002}%
\special{pa 2334 2000}%
\special{pa 2364 1994}%
\special{pa 2386 1982}%
\special{pa 2408 1964}%
\special{pa 2432 1938}%
\special{pa 2466 1902}%
\special{pa 2504 1866}%
\special{pa 2530 1842}%
\special{pa 2530 1848}%
\special{pa 2528 1852}%
\special{sp}%
\special{pn 8}%
\special{ar 2530 1580 50 274  4.7711448 6.2831853}%
\special{ar 2530 1580 50 274  0.0000000 4.7123890}%
\special{pn 8}%
\special{pa 1894 1552}%
\special{pa 1920 1572}%
\special{pa 1946 1588}%
\special{pa 1976 1602}%
\special{pa 2006 1614}%
\special{pa 2036 1622}%
\special{pa 2068 1626}%
\special{pa 2100 1626}%
\special{pa 2132 1628}%
\special{pa 2164 1628}%
\special{pa 2196 1624}%
\special{pa 2228 1616}%
\special{pa 2256 1602}%
\special{pa 2282 1584}%
\special{pa 2306 1556}%
\special{pa 2308 1526}%
\special{pa 2302 1522}%
\special{sp}%
\special{pn 8}%
\special{pa 1966 1596}%
\special{pa 1992 1578}%
\special{pa 2022 1564}%
\special{pa 2052 1556}%
\special{pa 2084 1556}%
\special{pa 2116 1554}%
\special{pa 2148 1552}%
\special{pa 2180 1554}%
\special{pa 2212 1562}%
\special{pa 2240 1564}%
\special{pa 2276 1582}%
\special{pa 2272 1582}%
\special{sp}%
\special{pn 8}%
\special{ar 2530 1580 50 274  4.7711448 6.2831853}%
\special{ar 2530 1580 50 274  0.0000000 4.7123890}%
\special{pn 8}%
\special{ar 2530 1580 50 274  4.7711448 6.2831853}%
\special{ar 2530 1580 50 274  0.0000000 4.7123890}%
\special{pn 8}%
\special{ar 2684 1588 44 276  1.5707963 4.7123890}%
\special{pn 8}%
\special{ar 2530 1580 50 274  4.7711448 6.2831853}%
\special{ar 2530 1580 50 274  0.0000000 4.7123890}%
\special{pn 8}%
\special{ar 2530 1580 50 274  4.7711448 6.2831853}%
\special{ar 2530 1580 50 274  0.0000000 4.7123890}%
\special{pn 8}%
\special{ar 3084 1584 24 202  1.3909428 4.8845798}%
\special{pn 8}%
\special{ar 2530 1580 50 274  4.7711448 6.2831853}%
\special{ar 2530 1580 50 274  0.0000000 4.7123890}%
\special{pn 8}%
\special{ar 2530 1580 50 274  4.7711448 6.2831853}%
\special{ar 2530 1580 50 274  0.0000000 4.7123890}%
% ELLIPSE 2 0 3 0
% 4 2953 1588 2916 1376 2953 1373 2956 1336
% 
\special{pn 8}%
\special{ar 2954 1588 38 212  4.7804654 6.2831853}%
\special{ar 2954 1588 38 212  0.0000000 4.7123890}%
\special{pn 8}%
\special{pa 3084 1384}%
\special{pa 3114 1390}%
\special{pa 3146 1394}%
\special{pa 3178 1392}%
\special{pa 3210 1386}%
\special{pa 3240 1376}%
\special{pa 3270 1360}%
\special{pa 3300 1354}%
\special{pa 3332 1358}%
\special{pa 3364 1364}%
\special{pa 3396 1366}%
\special{pa 3426 1378}%
\special{pa 3450 1398}%
\special{pa 3472 1422}%
\special{pa 3492 1448}%
\special{pa 3510 1474}%
\special{pa 3524 1502}%
\special{pa 3536 1532}%
\special{pa 3546 1564}%
\special{pa 3556 1594}%
\special{pa 3560 1626}%
\special{pa 3560 1658}%
\special{pa 3554 1688}%
\special{pa 3542 1718}%
\special{pa 3530 1750}%
\special{pa 3516 1780}%
\special{pa 3498 1806}%
\special{pa 3472 1822}%
\special{pa 3442 1832}%
\special{pa 3408 1836}%
\special{pa 3376 1840}%
\special{pa 3346 1844}%
\special{pa 3314 1848}%
\special{pa 3280 1852}%
\special{pa 3250 1850}%
\special{pa 3222 1836}%
\special{pa 3196 1814}%
\special{pa 3174 1788}%
\special{pa 3150 1766}%
\special{pa 3122 1758}%
\special{pa 3090 1768}%
\special{pa 3078 1774}%
\special{sp}%
% ELLIPSE 2 0 3 0
% 4 2530 1579 2480 1306 2530 1306 2533 1300
% 
\special{pn 8}%
\special{ar 2530 1580 50 274  4.7711448 6.2831853}%
\special{ar 2530 1580 50 274  0.0000000 4.7123890}%
\special{pn 8}%
\special{pa 2534 1306}%
\special{pa 2514 1282}%
\special{pa 2494 1256}%
\special{pa 2472 1232}%
\special{pa 2448 1210}%
\special{pa 2424 1192}%
\special{pa 2396 1174}%
\special{pa 2368 1160}%
\special{pa 2338 1146}%
\special{pa 2308 1134}%
\special{pa 2278 1130}%
\special{pa 2246 1132}%
\special{pa 2214 1134}%
\special{pa 2182 1132}%
\special{pa 2150 1132}%
\special{pa 2118 1134}%
\special{pa 2086 1136}%
\special{pa 2054 1140}%
\special{pa 2022 1146}%
\special{pa 1990 1152}%
\special{pa 1960 1160}%
\special{pa 1928 1170}%
\special{pa 1898 1180}%
\special{pa 1870 1194}%
\special{pa 1842 1208}%
\special{pa 1814 1226}%
\special{pa 1788 1244}%
\special{pa 1762 1262}%
\special{pa 1738 1284}%
\special{pa 1716 1306}%
\special{pa 1696 1334}%
\special{pa 1682 1362}%
\special{pa 1674 1392}%
\special{pa 1668 1424}%
\special{pa 1666 1456}%
\special{pa 1664 1488}%
\special{pa 1664 1520}%
\special{pa 1664 1552}%
\special{pa 1664 1584}%
\special{pa 1664 1616}%
\special{pa 1664 1648}%
\special{pa 1662 1682}%
\special{pa 1662 1714}%
\special{pa 1666 1746}%
\special{pa 1676 1774}%
\special{pa 1692 1800}%
\special{pa 1714 1826}%
\special{pa 1736 1850}%
\special{pa 1758 1878}%
\special{pa 1778 1902}%
\special{pa 1800 1926}%
\special{pa 1824 1944}%
\special{pa 1852 1958}%
\special{pa 1882 1964}%
\special{pa 1914 1968}%
\special{pa 1948 1972}%
\special{pa 1980 1976}%
\special{pa 2012 1982}%
\special{pa 2044 1990}%
\special{pa 2074 1996}%
\special{pa 2106 2002}%
\special{pa 2136 2004}%
\special{pa 2168 2004}%
\special{pa 2200 2004}%
\special{pa 2232 2002}%
\special{pa 2268 2002}%
\special{pa 2302 2002}%
\special{pa 2334 2000}%
\special{pa 2364 1994}%
\special{pa 2386 1982}%
\special{pa 2408 1964}%
\special{pa 2432 1938}%
\special{pa 2466 1902}%
\special{pa 2504 1866}%
\special{pa 2530 1842}%
\special{pa 2530 1848}%
\special{pa 2528 1852}%
\special{sp}%
\special{pn 8}%
\special{ar 2530 1580 50 274  4.7711448 6.2831853}%
\special{ar 2530 1580 50 274  0.0000000 4.7123890}%
\special{pn 8}%
\special{pa 1894 1552}%
\special{pa 1920 1572}%
\special{pa 1946 1588}%
\special{pa 1976 1602}%
\special{pa 2006 1614}%
\special{pa 2036 1622}%
\special{pa 2068 1626}%
\special{pa 2100 1626}%
\special{pa 2132 1628}%
\special{pa 2164 1628}%
\special{pa 2196 1624}%
\special{pa 2228 1616}%
\special{pa 2256 1602}%
\special{pa 2282 1584}%
\special{pa 2306 1556}%
\special{pa 2308 1526}%
\special{pa 2302 1522}%
\special{sp}%
\special{pn 8}%
\special{pa 1966 1596}%
\special{pa 1992 1578}%
\special{pa 2022 1564}%
\special{pa 2052 1556}%
\special{pa 2084 1556}%
\special{pa 2116 1554}%
\special{pa 2148 1552}%
\special{pa 2180 1554}%
\special{pa 2212 1562}%
\special{pa 2240 1564}%
\special{pa 2276 1582}%
\special{pa 2272 1582}%
\special{sp}%
\special{pn 8}%
\special{ar 2530 1580 50 274  4.7711448 6.2831853}%
\special{ar 2530 1580 50 274  0.0000000 4.7123890}%
\special{pn 8}%
\special{ar 2530 1580 50 274  4.7711448 6.2831853}%
\special{ar 2530 1580 50 274  0.0000000 4.7123890}%
\special{pn 8}%
\special{ar 2684 1588 44 276  1.5707963 4.7123890}%
\special{pn 8}%
\special{ar 2530 1580 50 274  4.7711448 6.2831853}%
\special{ar 2530 1580 50 274  0.0000000 4.7123890}%
\special{pn 8}%
\special{ar 2530 1580 50 274  4.7711448 6.2831853}%
\special{ar 2530 1580 50 274  0.0000000 4.7123890}%
\special{pn 8}%
\special{ar 3084 1584 24 202  1.3909428 4.8845798}%
\special{pn 8}%
\special{ar 2530 1580 50 274  4.7711448 6.2831853}%
\special{ar 2530 1580 50 274  0.0000000 4.7123890}%
\special{pn 8}%
\special{ar 2530 1580 50 274  4.7711448 6.2831853}%
\special{ar 2530 1580 50 274  0.0000000 4.7123890}%
% ELLIPSE 2 0 3 0
% 4 2953 1588 2916 1376 2953 1373 2956 1336
% 
\special{pn 8}%
\special{ar 2954 1588 38 212  4.7804654 6.2831853}%
\special{ar 2954 1588 38 212  0.0000000 4.7123890}%
\special{pn 8}%
\special{pa 3188 1598}%
\special{pa 3212 1620}%
\special{pa 3240 1636}%
\special{pa 3270 1644}%
\special{pa 3302 1650}%
\special{pa 3336 1652}%
\special{pa 3366 1646}%
\special{pa 3392 1628}%
\special{pa 3414 1604}%
\special{pa 3426 1574}%
\special{pa 3426 1572}%
\special{sp}%
\special{pn 8}%
\special{pa 3224 1622}%
\special{pa 3252 1606}%
\special{pa 3282 1596}%
\special{pa 3314 1592}%
\special{pa 3346 1592}%
\special{pa 3380 1590}%
\special{pa 3410 1596}%
\special{pa 3420 1598}%
\special{sp}%
\special{pn 8}%
\special{ar 2530 1580 50 274  4.7711448 6.2831853}%
\special{ar 2530 1580 50 274  0.0000000 4.7123890}%
\special{pn 8}%
\special{pa 2534 1306}%
\special{pa 2514 1282}%
\special{pa 2494 1256}%
\special{pa 2472 1232}%
\special{pa 2448 1210}%
\special{pa 2424 1192}%
\special{pa 2396 1174}%
\special{pa 2368 1160}%
\special{pa 2338 1146}%
\special{pa 2308 1134}%
\special{pa 2278 1130}%
\special{pa 2246 1132}%
\special{pa 2214 1134}%
\special{pa 2182 1132}%
\special{pa 2150 1132}%
\special{pa 2118 1134}%
\special{pa 2086 1136}%
\special{pa 2054 1140}%
\special{pa 2022 1146}%
\special{pa 1990 1152}%
\special{pa 1960 1160}%
\special{pa 1928 1170}%
\special{pa 1898 1180}%
\special{pa 1870 1194}%
\special{pa 1842 1208}%
\special{pa 1814 1226}%
\special{pa 1788 1244}%
\special{pa 1762 1262}%
\special{pa 1738 1284}%
\special{pa 1716 1306}%
\special{pa 1696 1334}%
\special{pa 1682 1362}%
\special{pa 1674 1392}%
\special{pa 1668 1424}%
\special{pa 1666 1456}%
\special{pa 1664 1488}%
\special{pa 1664 1520}%
\special{pa 1664 1552}%
\special{pa 1664 1584}%
\special{pa 1664 1616}%
\special{pa 1664 1648}%
\special{pa 1662 1682}%
\special{pa 1662 1714}%
\special{pa 1666 1746}%
\special{pa 1676 1774}%
\special{pa 1692 1800}%
\special{pa 1714 1826}%
\special{pa 1736 1850}%
\special{pa 1758 1878}%
\special{pa 1778 1902}%
\special{pa 1800 1926}%
\special{pa 1824 1944}%
\special{pa 1852 1958}%
\special{pa 1882 1964}%
\special{pa 1914 1968}%
\special{pa 1948 1972}%
\special{pa 1980 1976}%
\special{pa 2012 1982}%
\special{pa 2044 1990}%
\special{pa 2074 1996}%
\special{pa 2106 2002}%
\special{pa 2136 2004}%
\special{pa 2168 2004}%
\special{pa 2200 2004}%
\special{pa 2232 2002}%
\special{pa 2268 2002}%
\special{pa 2302 2002}%
\special{pa 2334 2000}%
\special{pa 2364 1994}%
\special{pa 2386 1982}%
\special{pa 2408 1964}%
\special{pa 2432 1938}%
\special{pa 2466 1902}%
\special{pa 2504 1866}%
\special{pa 2530 1842}%
\special{pa 2530 1848}%
\special{pa 2528 1852}%
\special{sp}%
\special{pn 8}%
\special{ar 2530 1580 50 274  4.7711448 6.2831853}%
\special{ar 2530 1580 50 274  0.0000000 4.7123890}%
\special{pn 8}%
\special{pa 1894 1552}%
\special{pa 1920 1572}%
\special{pa 1946 1588}%
\special{pa 1976 1602}%
\special{pa 2006 1614}%
\special{pa 2036 1622}%
\special{pa 2068 1626}%
\special{pa 2100 1626}%
\special{pa 2132 1628}%
\special{pa 2164 1628}%
\special{pa 2196 1624}%
\special{pa 2228 1616}%
\special{pa 2256 1602}%
\special{pa 2282 1584}%
\special{pa 2306 1556}%
\special{pa 2308 1526}%
\special{pa 2302 1522}%
\special{sp}%
\special{pn 8}%
\special{pa 1966 1596}%
\special{pa 1992 1578}%
\special{pa 2022 1564}%
\special{pa 2052 1556}%
\special{pa 2084 1556}%
\special{pa 2116 1554}%
\special{pa 2148 1552}%
\special{pa 2180 1554}%
\special{pa 2212 1562}%
\special{pa 2240 1564}%
\special{pa 2276 1582}%
\special{pa 2272 1582}%
\special{sp}%
% ELLIPSE 2 0 3 0
% 4 2530 1579 2480 1306 2530 1306 2533 1300
% 
\special{pn 8}%
\special{ar 2530 1580 50 274  4.7711448 6.2831853}%
\special{ar 2530 1580 50 274  0.0000000 4.7123890}%
% ELLIPSE 2 0 3 0
% 4 2530 1579 2480 1306 2530 1306 2533 1300
% 
\special{pn 8}%
\special{ar 2530 1580 50 274  4.7711448 6.2831853}%
\special{ar 2530 1580 50 274  0.0000000 4.7123890}%
% ELLIPSE 2 0 3 0
% 4 2684 1588 2640 1312 2684 1312 2684 1843
% 
\special{pn 8}%
\special{ar 2684 1588 44 276  1.5707963 4.7123890}%
% ELLIPSE 2 0 3 0
% 4 2530 1579 2480 1306 2530 1306 2533 1300
% 
\special{pn 8}%
\special{ar 2530 1580 50 274  4.7711448 6.2831853}%
\special{ar 2530 1580 50 274  0.0000000 4.7123890}%
\special{pn 8}%
\special{ar 2530 1580 50 274  4.7711448 6.2831853}%
\special{ar 2530 1580 50 274  0.0000000 4.7123890}%
\special{pn 8}%
\special{ar 3084 1584 24 202  1.3909428 4.8845798}%
\special{pn 8}%
\special{ar 2530 1580 50 274  4.7711448 6.2831853}%
\special{ar 2530 1580 50 274  0.0000000 4.7123890}%
\special{pn 8}%
\special{ar 2530 1580 50 274  4.7711448 6.2831853}%
\special{ar 2530 1580 50 274  0.0000000 4.7123890}%
\special{pn 8}%
\special{ar 2954 1588 38 212  4.7804654 6.2831853}%
\special{ar 2954 1588 38 212  0.0000000 4.7123890}%
\special{pn 8}%
\special{ar 2530 1580 50 274  4.7711448 6.2831853}%
\special{ar 2530 1580 50 274  0.0000000 4.7123890}%
\special{pn 8}%
\special{pa 2534 1306}%
\special{pa 2514 1282}%
\special{pa 2494 1256}%
\special{pa 2472 1232}%
\special{pa 2448 1210}%
\special{pa 2424 1192}%
\special{pa 2396 1174}%
\special{pa 2368 1160}%
\special{pa 2338 1146}%
\special{pa 2308 1134}%
\special{pa 2278 1130}%
\special{pa 2246 1132}%
\special{pa 2214 1134}%
\special{pa 2182 1132}%
\special{pa 2150 1132}%
\special{pa 2118 1134}%
\special{pa 2086 1136}%
\special{pa 2054 1140}%
\special{pa 2022 1146}%
\special{pa 1990 1152}%
\special{pa 1960 1160}%
\special{pa 1928 1170}%
\special{pa 1898 1180}%
\special{pa 1870 1194}%
\special{pa 1842 1208}%
\special{pa 1814 1226}%
\special{pa 1788 1244}%
\special{pa 1762 1262}%
\special{pa 1738 1284}%
\special{pa 1716 1306}%
\special{pa 1696 1334}%
\special{pa 1682 1362}%
\special{pa 1674 1392}%
\special{pa 1668 1424}%
\special{pa 1666 1456}%
\special{pa 1664 1488}%
\special{pa 1664 1520}%
\special{pa 1664 1552}%
\special{pa 1664 1584}%
\special{pa 1664 1616}%
\special{pa 1664 1648}%
\special{pa 1662 1682}%
\special{pa 1662 1714}%
\special{pa 1666 1746}%
\special{pa 1676 1774}%
\special{pa 1692 1800}%
\special{pa 1714 1826}%
\special{pa 1736 1850}%
\special{pa 1758 1878}%
\special{pa 1778 1902}%
\special{pa 1800 1926}%
\special{pa 1824 1944}%
\special{pa 1852 1958}%
\special{pa 1882 1964}%
\special{pa 1914 1968}%
\special{pa 1948 1972}%
\special{pa 1980 1976}%
\special{pa 2012 1982}%
\special{pa 2044 1990}%
\special{pa 2074 1996}%
\special{pa 2106 2002}%
\special{pa 2136 2004}%
\special{pa 2168 2004}%
\special{pa 2200 2004}%
\special{pa 2232 2002}%
\special{pa 2268 2002}%
\special{pa 2302 2002}%
\special{pa 2334 2000}%
\special{pa 2364 1994}%
\special{pa 2386 1982}%
\special{pa 2408 1964}%
\special{pa 2432 1938}%
\special{pa 2466 1902}%
\special{pa 2504 1866}%
\special{pa 2530 1842}%
\special{pa 2530 1848}%
\special{pa 2528 1852}%
\special{sp}%
\special{pn 8}%
\special{ar 2530 1580 50 274  4.7711448 6.2831853}%
\special{ar 2530 1580 50 274  0.0000000 4.7123890}%
\special{pn 8}%
\special{pa 1894 1552}%
\special{pa 1920 1572}%
\special{pa 1946 1588}%
\special{pa 1976 1602}%
\special{pa 2006 1614}%
\special{pa 2036 1622}%
\special{pa 2068 1626}%
\special{pa 2100 1626}%
\special{pa 2132 1628}%
\special{pa 2164 1628}%
\special{pa 2196 1624}%
\special{pa 2228 1616}%
\special{pa 2256 1602}%
\special{pa 2282 1584}%
\special{pa 2306 1556}%
\special{pa 2308 1526}%
\special{pa 2302 1522}%
\special{sp}%
\special{pn 8}%
\special{pa 1966 1596}%
\special{pa 1992 1578}%
\special{pa 2022 1564}%
\special{pa 2052 1556}%
\special{pa 2084 1556}%
\special{pa 2116 1554}%
\special{pa 2148 1552}%
\special{pa 2180 1554}%
\special{pa 2212 1562}%
\special{pa 2240 1564}%
\special{pa 2276 1582}%
\special{pa 2272 1582}%
\special{sp}%
\special{pn 8}%
\special{ar 2530 1580 50 274  4.7711448 6.2831853}%
\special{ar 2530 1580 50 274  0.0000000 4.7123890}%
\special{pn 8}%
\special{ar 2530 1580 50 274  4.7711448 6.2831853}%
\special{ar 2530 1580 50 274  0.0000000 4.7123890}%
\special{pn 8}%
\special{ar 2684 1588 44 276  1.5707963 4.7123890}%
\special{pn 8}%
\special{ar 2530 1580 50 274  4.7711448 6.2831853}%
\special{ar 2530 1580 50 274  0.0000000 4.7123890}%
\special{pn 8}%
\special{ar 2530 1580 50 274  4.7711448 6.2831853}%
\special{ar 2530 1580 50 274  0.0000000 4.7123890}%
\special{pn 8}%
\special{ar 3084 1584 24 202  1.3909428 4.8845798}%
\special{pn 8}%
\special{ar 2530 1580 50 274  4.7711448 6.2831853}%
\special{ar 2530 1580 50 274  0.0000000 4.7123890}%
\special{pn 8}%
\special{pa 3084 1390}%
\special{pa 3098 1418}%
\special{pa 3102 1450}%
\special{pa 3104 1482}%
\special{pa 3108 1514}%
\special{pa 3110 1546}%
\special{pa 3110 1578}%
\special{pa 3110 1608}%
\special{pa 3108 1642}%
\special{pa 3108 1674}%
\special{pa 3106 1708}%
\special{pa 3100 1738}%
\special{pa 3084 1766}%
\special{pa 3078 1774}%
\special{sp -0.045}%
\special{pn 8}%
\special{pa 2684 1326}%
\special{pa 2690 1358}%
\special{pa 2694 1390}%
\special{pa 2698 1422}%
\special{pa 2698 1454}%
\special{pa 2698 1486}%
\special{pa 2698 1518}%
\special{pa 2698 1550}%
\special{pa 2700 1582}%
\special{pa 2702 1614}%
\special{pa 2702 1646}%
\special{pa 2702 1678}%
\special{pa 2700 1710}%
\special{pa 2696 1742}%
\special{pa 2694 1774}%
\special{pa 2690 1806}%
\special{pa 2684 1836}%
\special{pa 2678 1858}%
\special{sp -0.045}%
\special{pn 8}%
\special{ar 2530 1580 50 274  4.7711448 6.2831853}%
\special{ar 2530 1580 50 274  0.0000000 4.7123890}%
\special{pn 8}%
\special{pa 2534 1306}%
\special{pa 2514 1282}%
\special{pa 2494 1256}%
\special{pa 2472 1232}%
\special{pa 2448 1210}%
\special{pa 2424 1192}%
\special{pa 2396 1174}%
\special{pa 2368 1160}%
\special{pa 2338 1146}%
\special{pa 2308 1134}%
\special{pa 2278 1130}%
\special{pa 2246 1132}%
\special{pa 2214 1134}%
\special{pa 2182 1132}%
\special{pa 2150 1132}%
\special{pa 2118 1134}%
\special{pa 2086 1136}%
\special{pa 2054 1140}%
\special{pa 2022 1146}%
\special{pa 1990 1152}%
\special{pa 1960 1160}%
\special{pa 1928 1170}%
\special{pa 1898 1180}%
\special{pa 1870 1194}%
\special{pa 1842 1208}%
\special{pa 1814 1226}%
\special{pa 1788 1244}%
\special{pa 1762 1262}%
\special{pa 1738 1284}%
\special{pa 1716 1306}%
\special{pa 1696 1334}%
\special{pa 1682 1362}%
\special{pa 1674 1392}%
\special{pa 1668 1424}%
\special{pa 1666 1456}%
\special{pa 1664 1488}%
\special{pa 1664 1520}%
\special{pa 1664 1552}%
\special{pa 1664 1584}%
\special{pa 1664 1616}%
\special{pa 1664 1648}%
\special{pa 1662 1682}%
\special{pa 1662 1714}%
\special{pa 1666 1746}%
\special{pa 1676 1774}%
\special{pa 1692 1800}%
\special{pa 1714 1826}%
\special{pa 1736 1850}%
\special{pa 1758 1878}%
\special{pa 1778 1902}%
\special{pa 1800 1926}%
\special{pa 1824 1944}%
\special{pa 1852 1958}%
\special{pa 1882 1964}%
\special{pa 1914 1968}%
\special{pa 1948 1972}%
\special{pa 1980 1976}%
\special{pa 2012 1982}%
\special{pa 2044 1990}%
\special{pa 2074 1996}%
\special{pa 2106 2002}%
\special{pa 2136 2004}%
\special{pa 2168 2004}%
\special{pa 2200 2004}%
\special{pa 2232 2002}%
\special{pa 2268 2002}%
\special{pa 2302 2002}%
\special{pa 2334 2000}%
\special{pa 2364 1994}%
\special{pa 2386 1982}%
\special{pa 2408 1964}%
\special{pa 2432 1938}%
\special{pa 2466 1902}%
\special{pa 2504 1866}%
\special{pa 2530 1842}%
\special{pa 2530 1848}%
\special{pa 2528 1852}%
\special{sp}%
\special{pn 8}%
\special{ar 2530 1580 50 274  4.7711448 6.2831853}%
\special{ar 2530 1580 50 274  0.0000000 4.7123890}%
\special{pn 8}%
\special{pa 1894 1552}%
\special{pa 1920 1572}%
\special{pa 1946 1588}%
\special{pa 1976 1602}%
\special{pa 2006 1614}%
\special{pa 2036 1622}%
\special{pa 2068 1626}%
\special{pa 2100 1626}%
\special{pa 2132 1628}%
\special{pa 2164 1628}%
\special{pa 2196 1624}%
\special{pa 2228 1616}%
\special{pa 2256 1602}%
\special{pa 2282 1584}%
\special{pa 2306 1556}%
\special{pa 2308 1526}%
\special{pa 2302 1522}%
\special{sp}%
\special{pn 8}%
\special{pa 1966 1596}%
\special{pa 1992 1578}%
\special{pa 2022 1564}%
\special{pa 2052 1556}%
\special{pa 2084 1556}%
\special{pa 2116 1554}%
\special{pa 2148 1552}%
\special{pa 2180 1554}%
\special{pa 2212 1562}%
\special{pa 2240 1564}%
\special{pa 2276 1582}%
\special{pa 2272 1582}%
\special{sp}%
\special{pn 8}%
\special{ar 2530 1580 50 274  4.7711448 6.2831853}%
\special{ar 2530 1580 50 274  0.0000000 4.7123890}%
\special{pn 8}%
\special{ar 2530 1580 50 274  4.7711448 6.2831853}%
\special{ar 2530 1580 50 274  0.0000000 4.7123890}%
\special{pn 8}%
\special{ar 2684 1588 44 276  1.5707963 4.7123890}%
\special{pn 8}%
\special{ar 2530 1580 50 274  4.7711448 6.2831853}%
\special{ar 2530 1580 50 274  0.0000000 4.7123890}%
\special{pn 8}%
\special{ar 2530 1580 50 274  4.7711448 6.2831853}%
\special{ar 2530 1580 50 274  0.0000000 4.7123890}%
\special{pn 8}%
\special{ar 3084 1584 24 202  1.3909428 4.8845798}%
\special{pn 8}%
\special{ar 2530 1580 50 274  4.7711448 6.2831853}%
\special{ar 2530 1580 50 274  0.0000000 4.7123890}%
\special{pn 8}%
\special{ar 2530 1580 50 274  4.7711448 6.2831853}%
\special{ar 2530 1580 50 274  0.0000000 4.7123890}%
\special{pn 8}%
\special{ar 2954 1588 38 212  4.7804654 6.2831853}%
\special{ar 2954 1588 38 212  0.0000000 4.7123890}%
\special{pn 8}%
\special{pa 2682 1320}%
\special{pa 2710 1336}%
\special{pa 2738 1350}%
\special{pa 2766 1364}%
\special{pa 2796 1376}%
\special{pa 2826 1384}%
\special{pa 2860 1392}%
\special{pa 2894 1398}%
\special{pa 2926 1396}%
\special{pa 2954 1386}%
\special{pa 2964 1380}%
\special{sp}%
\special{pn 8}%
\special{pa 2684 1858}%
\special{pa 2714 1846}%
\special{pa 2744 1832}%
\special{pa 2772 1818}%
\special{pa 2802 1804}%
\special{pa 2830 1794}%
\special{pa 2862 1790}%
\special{pa 2894 1788}%
\special{pa 2926 1792}%
\special{pa 2950 1794}%
\special{sp}%
\special{pn 8}%
\special{ar 2530 1580 50 274  4.7711448 6.2831853}%
\special{ar 2530 1580 50 274  0.0000000 4.7123890}%
\special{pn 8}%
\special{pa 2534 1306}%
\special{pa 2514 1282}%
\special{pa 2494 1256}%
\special{pa 2472 1232}%
\special{pa 2448 1210}%
\special{pa 2424 1192}%
\special{pa 2396 1174}%
\special{pa 2368 1160}%
\special{pa 2338 1146}%
\special{pa 2308 1134}%
\special{pa 2278 1130}%
\special{pa 2246 1132}%
\special{pa 2214 1134}%
\special{pa 2182 1132}%
\special{pa 2150 1132}%
\special{pa 2118 1134}%
\special{pa 2086 1136}%
\special{pa 2054 1140}%
\special{pa 2022 1146}%
\special{pa 1990 1152}%
\special{pa 1960 1160}%
\special{pa 1928 1170}%
\special{pa 1898 1180}%
\special{pa 1870 1194}%
\special{pa 1842 1208}%
\special{pa 1814 1226}%
\special{pa 1788 1244}%
\special{pa 1762 1262}%
\special{pa 1738 1284}%
\special{pa 1716 1306}%
\special{pa 1696 1334}%
\special{pa 1682 1362}%
\special{pa 1674 1392}%
\special{pa 1668 1424}%
\special{pa 1666 1456}%
\special{pa 1664 1488}%
\special{pa 1664 1520}%
\special{pa 1664 1552}%
\special{pa 1664 1584}%
\special{pa 1664 1616}%
\special{pa 1664 1648}%
\special{pa 1662 1682}%
\special{pa 1662 1714}%
\special{pa 1666 1746}%
\special{pa 1676 1774}%
\special{pa 1692 1800}%
\special{pa 1714 1826}%
\special{pa 1736 1850}%
\special{pa 1758 1878}%
\special{pa 1778 1902}%
\special{pa 1800 1926}%
\special{pa 1824 1944}%
\special{pa 1852 1958}%
\special{pa 1882 1964}%
\special{pa 1914 1968}%
\special{pa 1948 1972}%
\special{pa 1980 1976}%
\special{pa 2012 1982}%
\special{pa 2044 1990}%
\special{pa 2074 1996}%
\special{pa 2106 2002}%
\special{pa 2136 2004}%
\special{pa 2168 2004}%
\special{pa 2200 2004}%
\special{pa 2232 2002}%
\special{pa 2268 2002}%
\special{pa 2302 2002}%
\special{pa 2334 2000}%
\special{pa 2364 1994}%
\special{pa 2386 1982}%
\special{pa 2408 1964}%
\special{pa 2432 1938}%
\special{pa 2466 1902}%
\special{pa 2504 1866}%
\special{pa 2530 1842}%
\special{pa 2530 1848}%
\special{pa 2528 1852}%
\special{sp}%
% ELLIPSE 2 0 3 0
% 4 2530 1579 2480 1306 2530 1306 2533 1300
% 
\special{pn 8}%
\special{ar 2530 1580 50 274  4.7711448 6.2831853}%
\special{ar 2530 1580 50 274  0.0000000 4.7123890}%
\special{pn 8}%
\special{pa 1894 1552}%
\special{pa 1920 1572}%
\special{pa 1946 1588}%
\special{pa 1976 1602}%
\special{pa 2006 1614}%
\special{pa 2036 1622}%
\special{pa 2068 1626}%
\special{pa 2100 1626}%
\special{pa 2132 1628}%
\special{pa 2164 1628}%
\special{pa 2196 1624}%
\special{pa 2228 1616}%
\special{pa 2256 1602}%
\special{pa 2282 1584}%
\special{pa 2306 1556}%
\special{pa 2308 1526}%
\special{pa 2302 1522}%
\special{sp}%
\special{pn 8}%
\special{pa 1966 1596}%
\special{pa 1992 1578}%
\special{pa 2022 1564}%
\special{pa 2052 1556}%
\special{pa 2084 1556}%
\special{pa 2116 1554}%
\special{pa 2148 1552}%
\special{pa 2180 1554}%
\special{pa 2212 1562}%
\special{pa 2240 1564}%
\special{pa 2276 1582}%
\special{pa 2272 1582}%
\special{sp}%
% ELLIPSE 2 0 3 0
% 4 2530 1579 2480 1306 2530 1306 2533 1300
% 
\special{pn 8}%
\special{ar 2530 1580 50 274  4.7711448 6.2831853}%
\special{ar 2530 1580 50 274  0.0000000 4.7123890}%
% ELLIPSE 2 0 3 0
% 4 2530 1579 2480 1306 2530 1306 2533 1300
% 
\special{pn 8}%
\special{ar 2530 1580 50 274  4.7711448 6.2831853}%
\special{ar 2530 1580 50 274  0.0000000 4.7123890}%
\special{pn 8}%
\special{ar 2684 1588 44 276  1.5707963 4.7123890}%
\special{pn 8}%
\special{ar 2530 1580 50 274  4.7711448 6.2831853}%
\special{ar 2530 1580 50 274  0.0000000 4.7123890}%
\special{pn 8}%
\special{ar 2530 1580 50 274  4.7711448 6.2831853}%
\special{ar 2530 1580 50 274  0.0000000 4.7123890}%
\special{pn 8}%
\special{ar 3084 1584 24 202  1.3909428 4.8845798}%
\special{pn 8}%
\special{ar 2530 1580 50 274  4.7711448 6.2831853}%
\special{ar 2530 1580 50 274  0.0000000 4.7123890}%
\special{pn 8}%
\special{ar 2530 1580 50 274  4.7711448 6.2831853}%
\special{ar 2530 1580 50 274  0.0000000 4.7123890}%
\special{pn 8}%
\special{ar 2954 1588 38 212  4.7804654 6.2831853}%
\special{ar 2954 1588 38 212  0.0000000 4.7123890}%
\special{pn 8}%
\special{pa 3084 1384}%
\special{pa 3114 1390}%
\special{pa 3146 1394}%
\special{pa 3178 1392}%
\special{pa 3210 1386}%
\special{pa 3240 1376}%
\special{pa 3270 1360}%
\special{pa 3300 1354}%
\special{pa 3332 1358}%
\special{pa 3364 1364}%
\special{pa 3396 1366}%
\special{pa 3426 1378}%
\special{pa 3450 1398}%
\special{pa 3472 1422}%
\special{pa 3492 1448}%
\special{pa 3510 1474}%
\special{pa 3524 1502}%
\special{pa 3536 1532}%
\special{pa 3546 1564}%
\special{pa 3556 1594}%
\special{pa 3560 1626}%
\special{pa 3560 1658}%
\special{pa 3554 1688}%
\special{pa 3542 1718}%
\special{pa 3530 1750}%
\special{pa 3516 1780}%
\special{pa 3498 1806}%
\special{pa 3472 1822}%
\special{pa 3442 1832}%
\special{pa 3408 1836}%
\special{pa 3376 1840}%
\special{pa 3346 1844}%
\special{pa 3314 1848}%
\special{pa 3280 1852}%
\special{pa 3250 1850}%
\special{pa 3222 1836}%
\special{pa 3196 1814}%
\special{pa 3174 1788}%
\special{pa 3150 1766}%
\special{pa 3122 1758}%
\special{pa 3090 1768}%
\special{pa 3078 1774}%
\special{sp}%
\special{pn 8}%
\special{ar 2530 1580 50 274  4.7711448 6.2831853}%
\special{ar 2530 1580 50 274  0.0000000 4.7123890}%
\special{pn 8}%
\special{pa 2534 1306}%
\special{pa 2514 1282}%
\special{pa 2494 1256}%
\special{pa 2472 1232}%
\special{pa 2448 1210}%
\special{pa 2424 1192}%
\special{pa 2396 1174}%
\special{pa 2368 1160}%
\special{pa 2338 1146}%
\special{pa 2308 1134}%
\special{pa 2278 1130}%
\special{pa 2246 1132}%
\special{pa 2214 1134}%
\special{pa 2182 1132}%
\special{pa 2150 1132}%
\special{pa 2118 1134}%
\special{pa 2086 1136}%
\special{pa 2054 1140}%
\special{pa 2022 1146}%
\special{pa 1990 1152}%
\special{pa 1960 1160}%
\special{pa 1928 1170}%
\special{pa 1898 1180}%
\special{pa 1870 1194}%
\special{pa 1842 1208}%
\special{pa 1814 1226}%
\special{pa 1788 1244}%
\special{pa 1762 1262}%
\special{pa 1738 1284}%
\special{pa 1716 1306}%
\special{pa 1696 1334}%
\special{pa 1682 1362}%
\special{pa 1674 1392}%
\special{pa 1668 1424}%
\special{pa 1666 1456}%
\special{pa 1664 1488}%
\special{pa 1664 1520}%
\special{pa 1664 1552}%
\special{pa 1664 1584}%
\special{pa 1664 1616}%
\special{pa 1664 1648}%
\special{pa 1662 1682}%
\special{pa 1662 1714}%
\special{pa 1666 1746}%
\special{pa 1676 1774}%
\special{pa 1692 1800}%
\special{pa 1714 1826}%
\special{pa 1736 1850}%
\special{pa 1758 1878}%
\special{pa 1778 1902}%
\special{pa 1800 1926}%
\special{pa 1824 1944}%
\special{pa 1852 1958}%
\special{pa 1882 1964}%
\special{pa 1914 1968}%
\special{pa 1948 1972}%
\special{pa 1980 1976}%
\special{pa 2012 1982}%
\special{pa 2044 1990}%
\special{pa 2074 1996}%
\special{pa 2106 2002}%
\special{pa 2136 2004}%
\special{pa 2168 2004}%
\special{pa 2200 2004}%
\special{pa 2232 2002}%
\special{pa 2268 2002}%
\special{pa 2302 2002}%
\special{pa 2334 2000}%
\special{pa 2364 1994}%
\special{pa 2386 1982}%
\special{pa 2408 1964}%
\special{pa 2432 1938}%
\special{pa 2466 1902}%
\special{pa 2504 1866}%
\special{pa 2530 1842}%
\special{pa 2530 1848}%
\special{pa 2528 1852}%
\special{sp}%
% ELLIPSE 2 0 3 0
% 4 2530 1579 2480 1306 2530 1306 2533 1300
% 
\special{pn 8}%
\special{ar 2530 1580 50 274  4.7711448 6.2831853}%
\special{ar 2530 1580 50 274  0.0000000 4.7123890}%
\special{pn 8}%
\special{pa 1894 1552}%
\special{pa 1920 1572}%
\special{pa 1946 1588}%
\special{pa 1976 1602}%
\special{pa 2006 1614}%
\special{pa 2036 1622}%
\special{pa 2068 1626}%
\special{pa 2100 1626}%
\special{pa 2132 1628}%
\special{pa 2164 1628}%
\special{pa 2196 1624}%
\special{pa 2228 1616}%
\special{pa 2256 1602}%
\special{pa 2282 1584}%
\special{pa 2306 1556}%
\special{pa 2308 1526}%
\special{pa 2302 1522}%
\special{sp}%
\special{pn 8}%
\special{pa 1966 1596}%
\special{pa 1992 1578}%
\special{pa 2022 1564}%
\special{pa 2052 1556}%
\special{pa 2084 1556}%
\special{pa 2116 1554}%
\special{pa 2148 1552}%
\special{pa 2180 1554}%
\special{pa 2212 1562}%
\special{pa 2240 1564}%
\special{pa 2276 1582}%
\special{pa 2272 1582}%
\special{sp}%
\special{pn 8}%
\special{ar 2530 1580 50 274  4.7711448 6.2831853}%
\special{ar 2530 1580 50 274  0.0000000 4.7123890}%
% ELLIPSE 2 0 3 0
% 4 2530 1579 2480 1306 2530 1306 2533 1300
% 
\special{pn 8}%
\special{ar 2530 1580 50 274  4.7711448 6.2831853}%
\special{ar 2530 1580 50 274  0.0000000 4.7123890}%
\special{pn 8}%
\special{ar 2684 1588 44 276  1.5707963 4.7123890}%
\special{pn 8}%
\special{ar 2530 1580 50 274  4.7711448 6.2831853}%
\special{ar 2530 1580 50 274  0.0000000 4.7123890}%
\special{pn 8}%
\special{ar 2530 1580 50 274  4.7711448 6.2831853}%
\special{ar 2530 1580 50 274  0.0000000 4.7123890}%
\special{pn 8}%
\special{ar 3084 1584 24 202  1.3909428 4.8845798}%
\special{pn 8}%
\special{ar 2530 1580 50 274  4.7711448 6.2831853}%
\special{ar 2530 1580 50 274  0.0000000 4.7123890}%
\special{pn 8}%
\special{ar 2530 1580 50 274  4.7711448 6.2831853}%
\special{ar 2530 1580 50 274  0.0000000 4.7123890}%
\special{pn 8}%
\special{ar 2954 1588 38 212  4.7804654 6.2831853}%
\special{ar 2954 1588 38 212  0.0000000 4.7123890}%
\special{pn 8}%
\special{pa 3188 1598}%
\special{pa 3212 1620}%
\special{pa 3240 1636}%
\special{pa 3270 1644}%
\special{pa 3302 1650}%
\special{pa 3336 1652}%
\special{pa 3366 1646}%
\special{pa 3392 1628}%
\special{pa 3414 1604}%
\special{pa 3426 1574}%
\special{pa 3426 1572}%
\special{sp}%
\special{pn 8}%
\special{pa 3224 1622}%
\special{pa 3252 1606}%
\special{pa 3282 1596}%
\special{pa 3314 1592}%
\special{pa 3346 1592}%
\special{pa 3380 1590}%
\special{pa 3410 1596}%
\special{pa 3420 1598}%
\special{sp}%
\special{pn 8}%
\special{ar 2530 1580 50 274  4.7711448 6.2831853}%
\special{ar 2530 1580 50 274  0.0000000 4.7123890}%
\special{pn 8}%
\special{pa 2534 1306}%
\special{pa 2514 1282}%
\special{pa 2494 1256}%
\special{pa 2472 1232}%
\special{pa 2448 1210}%
\special{pa 2424 1192}%
\special{pa 2396 1174}%
\special{pa 2368 1160}%
\special{pa 2338 1146}%
\special{pa 2308 1134}%
\special{pa 2278 1130}%
\special{pa 2246 1132}%
\special{pa 2214 1134}%
\special{pa 2182 1132}%
\special{pa 2150 1132}%
\special{pa 2118 1134}%
\special{pa 2086 1136}%
\special{pa 2054 1140}%
\special{pa 2022 1146}%
\special{pa 1990 1152}%
\special{pa 1960 1160}%
\special{pa 1928 1170}%
\special{pa 1898 1180}%
\special{pa 1870 1194}%
\special{pa 1842 1208}%
\special{pa 1814 1226}%
\special{pa 1788 1244}%
\special{pa 1762 1262}%
\special{pa 1738 1284}%
\special{pa 1716 1306}%
\special{pa 1696 1334}%
\special{pa 1682 1362}%
\special{pa 1674 1392}%
\special{pa 1668 1424}%
\special{pa 1666 1456}%
\special{pa 1664 1488}%
\special{pa 1664 1520}%
\special{pa 1664 1552}%
\special{pa 1664 1584}%
\special{pa 1664 1616}%
\special{pa 1664 1648}%
\special{pa 1662 1682}%
\special{pa 1662 1714}%
\special{pa 1666 1746}%
\special{pa 1676 1774}%
\special{pa 1692 1800}%
\special{pa 1714 1826}%
\special{pa 1736 1850}%
\special{pa 1758 1878}%
\special{pa 1778 1902}%
\special{pa 1800 1926}%
\special{pa 1824 1944}%
\special{pa 1852 1958}%
\special{pa 1882 1964}%
\special{pa 1914 1968}%
\special{pa 1948 1972}%
\special{pa 1980 1976}%
\special{pa 2012 1982}%
\special{pa 2044 1990}%
\special{pa 2074 1996}%
\special{pa 2106 2002}%
\special{pa 2136 2004}%
\special{pa 2168 2004}%
\special{pa 2200 2004}%
\special{pa 2232 2002}%
\special{pa 2268 2002}%
\special{pa 2302 2002}%
\special{pa 2334 2000}%
\special{pa 2364 1994}%
\special{pa 2386 1982}%
\special{pa 2408 1964}%
\special{pa 2432 1938}%
\special{pa 2466 1902}%
\special{pa 2504 1866}%
\special{pa 2530 1842}%
\special{pa 2530 1848}%
\special{pa 2528 1852}%
\special{sp}%
\special{pn 8}%
\special{ar 2530 1580 50 274  4.7711448 6.2831853}%
\special{ar 2530 1580 50 274  0.0000000 4.7123890}%
\special{pn 8}%
\special{pa 1894 1552}%
\special{pa 1920 1572}%
\special{pa 1946 1588}%
\special{pa 1976 1602}%
\special{pa 2006 1614}%
\special{pa 2036 1622}%
\special{pa 2068 1626}%
\special{pa 2100 1626}%
\special{pa 2132 1628}%
\special{pa 2164 1628}%
\special{pa 2196 1624}%
\special{pa 2228 1616}%
\special{pa 2256 1602}%
\special{pa 2282 1584}%
\special{pa 2306 1556}%
\special{pa 2308 1526}%
\special{pa 2302 1522}%
\special{sp}%
\special{pn 8}%
\special{pa 1966 1596}%
\special{pa 1992 1578}%
\special{pa 2022 1564}%
\special{pa 2052 1556}%
\special{pa 2084 1556}%
\special{pa 2116 1554}%
\special{pa 2148 1552}%
\special{pa 2180 1554}%
\special{pa 2212 1562}%
\special{pa 2240 1564}%
\special{pa 2276 1582}%
\special{pa 2272 1582}%
\special{sp}%
\special{pn 8}%
\special{ar 2530 1580 50 274  4.7711448 6.2831853}%
\special{ar 2530 1580 50 274  0.0000000 4.7123890}%
\special{pn 8}%
\special{ar 2530 1580 50 274  4.7711448 6.2831853}%
\special{ar 2530 1580 50 274  0.0000000 4.7123890}%
\special{pn 8}%
\special{ar 2684 1588 44 276  1.5707963 4.7123890}%
\special{pn 8}%
\special{ar 2530 1580 50 274  4.7711448 6.2831853}%
\special{ar 2530 1580 50 274  0.0000000 4.7123890}%
\special{pn 8}%
\special{ar 2530 1580 50 274  4.7711448 6.2831853}%
\special{ar 2530 1580 50 274  0.0000000 4.7123890}%
\special{pn 8}%
\special{ar 3084 1584 24 202  1.3909428 4.8845798}%
% ELLIPSE 2 0 3 0
% 4 2530 1579 2480 1306 2530 1306 2533 1300
% 
\special{pn 8}%
\special{ar 2530 1580 50 274  4.7711448 6.2831853}%
\special{ar 2530 1580 50 274  0.0000000 4.7123890}%
\special{pn 8}%
\special{ar 2530 1580 50 274  4.7711448 6.2831853}%
\special{ar 2530 1580 50 274  0.0000000 4.7123890}%
\special{pn 8}%
\special{ar 2954 1588 38 212  4.7804654 6.2831853}%
\special{ar 2954 1588 38 212  0.0000000 4.7123890}%
\special{pn 8}%
\special{ar 2530 1580 50 274  4.7711448 6.2831853}%
\special{ar 2530 1580 50 274  0.0000000 4.7123890}%
\special{pn 8}%
\special{pa 2534 1306}%
\special{pa 2514 1282}%
\special{pa 2494 1256}%
\special{pa 2472 1232}%
\special{pa 2448 1210}%
\special{pa 2424 1192}%
\special{pa 2396 1174}%
\special{pa 2368 1160}%
\special{pa 2338 1146}%
\special{pa 2308 1134}%
\special{pa 2278 1130}%
\special{pa 2246 1132}%
\special{pa 2214 1134}%
\special{pa 2182 1132}%
\special{pa 2150 1132}%
\special{pa 2118 1134}%
\special{pa 2086 1136}%
\special{pa 2054 1140}%
\special{pa 2022 1146}%
\special{pa 1990 1152}%
\special{pa 1960 1160}%
\special{pa 1928 1170}%
\special{pa 1898 1180}%
\special{pa 1870 1194}%
\special{pa 1842 1208}%
\special{pa 1814 1226}%
\special{pa 1788 1244}%
\special{pa 1762 1262}%
\special{pa 1738 1284}%
\special{pa 1716 1306}%
\special{pa 1696 1334}%
\special{pa 1682 1362}%
\special{pa 1674 1392}%
\special{pa 1668 1424}%
\special{pa 1666 1456}%
\special{pa 1664 1488}%
\special{pa 1664 1520}%
\special{pa 1664 1552}%
\special{pa 1664 1584}%
\special{pa 1664 1616}%
\special{pa 1664 1648}%
\special{pa 1662 1682}%
\special{pa 1662 1714}%
\special{pa 1666 1746}%
\special{pa 1676 1774}%
\special{pa 1692 1800}%
\special{pa 1714 1826}%
\special{pa 1736 1850}%
\special{pa 1758 1878}%
\special{pa 1778 1902}%
\special{pa 1800 1926}%
\special{pa 1824 1944}%
\special{pa 1852 1958}%
\special{pa 1882 1964}%
\special{pa 1914 1968}%
\special{pa 1948 1972}%
\special{pa 1980 1976}%
\special{pa 2012 1982}%
\special{pa 2044 1990}%
\special{pa 2074 1996}%
\special{pa 2106 2002}%
\special{pa 2136 2004}%
\special{pa 2168 2004}%
\special{pa 2200 2004}%
\special{pa 2232 2002}%
\special{pa 2268 2002}%
\special{pa 2302 2002}%
\special{pa 2334 2000}%
\special{pa 2364 1994}%
\special{pa 2386 1982}%
\special{pa 2408 1964}%
\special{pa 2432 1938}%
\special{pa 2466 1902}%
\special{pa 2504 1866}%
\special{pa 2530 1842}%
\special{pa 2530 1848}%
\special{pa 2528 1852}%
\special{sp}%
\special{pn 8}%
\special{ar 2530 1580 50 274  4.7711448 6.2831853}%
\special{ar 2530 1580 50 274  0.0000000 4.7123890}%
\special{pn 8}%
\special{pa 1894 1552}%
\special{pa 1920 1572}%
\special{pa 1946 1588}%
\special{pa 1976 1602}%
\special{pa 2006 1614}%
\special{pa 2036 1622}%
\special{pa 2068 1626}%
\special{pa 2100 1626}%
\special{pa 2132 1628}%
\special{pa 2164 1628}%
\special{pa 2196 1624}%
\special{pa 2228 1616}%
\special{pa 2256 1602}%
\special{pa 2282 1584}%
\special{pa 2306 1556}%
\special{pa 2308 1526}%
\special{pa 2302 1522}%
\special{sp}%
\special{pn 8}%
\special{pa 1966 1596}%
\special{pa 1992 1578}%
\special{pa 2022 1564}%
\special{pa 2052 1556}%
\special{pa 2084 1556}%
\special{pa 2116 1554}%
\special{pa 2148 1552}%
\special{pa 2180 1554}%
\special{pa 2212 1562}%
\special{pa 2240 1564}%
\special{pa 2276 1582}%
\special{pa 2272 1582}%
\special{sp}%
\special{pn 8}%
\special{ar 2530 1580 50 274  4.7711448 6.2831853}%
\special{ar 2530 1580 50 274  0.0000000 4.7123890}%
\special{pn 8}%
\special{ar 2530 1580 50 274  4.7711448 6.2831853}%
\special{ar 2530 1580 50 274  0.0000000 4.7123890}%
\special{pn 8}%
\special{ar 2684 1588 44 276  1.5707963 4.7123890}%
\special{pn 8}%
\special{ar 2530 1580 50 274  4.7711448 6.2831853}%
\special{ar 2530 1580 50 274  0.0000000 4.7123890}%
\special{pn 8}%
\special{ar 2530 1580 50 274  4.7711448 6.2831853}%
\special{ar 2530 1580 50 274  0.0000000 4.7123890}%
\special{pn 8}%
\special{ar 3084 1584 24 202  1.3909428 4.8845798}%
\special{pn 8}%
\special{ar 2530 1580 50 274  4.7711448 6.2831853}%
\special{ar 2530 1580 50 274  0.0000000 4.7123890}%
\special{pn 8}%
\special{ar 2530 1580 50 274  4.7711448 6.2831853}%
\special{ar 2530 1580 50 274  0.0000000 4.7123890}%
\special{pn 8}%
\special{ar 2954 1588 38 212  4.7804654 6.2831853}%
\special{ar 2954 1588 38 212  0.0000000 4.7123890}%
\put(17.7900,-22.5700){\makebox(0,0)[lb]{$M_1(3 \eps)$}}%
\put(33.3300,-19.8400){\makebox(0,0){$\eps . M_2(1)$}}%
\end{picture}%

\begin{center}
 \caption[smoothing of $(M_{\eps}, g_{\eps})$]{smoothing of $(M_{\eps}, g_{\eps})$}
\end{center}

\end{figure}
%%%%%%%%%%%%%%%%%%%%%%

\vspace{-0.8cm}

Let $\mcC_{a,b}$ be the cone $(a,b) \times \Sn$ endowed with the (conical) 
 metric $dr^2+r^2 h$.
%%%%%%%%%%%%%%%%%%%%%%%%%

\section{Small eigenvalues}
%%%%%%%%%%%%%%%%%%%%%%%%%%%
Let's show that $M(\eps)$ has no small eigenvalues. 
\begin{pro}\label{McG}If $1\leq p\leq n,$
There is a constant $\lambda_0>0$ such that, if $1\leq p\leq n,$
$$\lambda_\eps\neq 0\Rightarrow \lambda_\eps\geq\lambda_0.
$$
\end{pro}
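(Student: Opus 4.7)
The plan is to apply McGowan's lemma to an appropriate two-set cover of $M_\eps$. Set $U_1 := M_1(\eps/2)$ and $U_2 := \mcC_{\eps/2,\,3\eps/2}\cup \eps.M_2(1)$, so that $U_1\cap U_2$ is an annular cone of thickness $\sim \eps$. Choose a partition of unity $\chi_1,\chi_2$ subordinate to this cover with $\|d\chi_i\|_\infty = O(\eps^{-1})$. McGowan's lemma (in the form exploited in \cite{AC2}) then delivers an inequality of the shape
$$
\frac{1}{\lambda_1^p(M_\eps)} \leq C \Bigl( \frac{1}{\lambda_1^p(U_1)} + \frac{1}{\lambda_1^p(U_2)} + \frac{\|d\chi_1\|_\infty^2 + \|d\chi_2\|_\infty^2}{\lambda_1^{p-1}(U_1\cap U_2)} \Bigr),
$$
where each $\lambda_1^p(U_i)$ is the first \emph{positive} Hodge-de Rham eigenvalue on $U_i$ with absolute (or relative) boundary conditions.

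The decisive point is the scale invariance on the collapsing side. The homothety $x\mapsto \eps^{-1}x$ is an isometry from $(U_2,g_\eps)$ onto a fixed manifold $\tilde U_2 := \mcC_{1/2,\,3/2}\cup M_2(1)$, and likewise sends $U_1\cap U_2$ onto $\mcC_{1/2,\,3/2}$. Hence
$$
\lambda_1^p(U_2) = \eps^{-2}\,\lambda_1^p(\tilde U_2), \qquad
\lambda_1^{p-1}(U_1\cap U_2) = \eps^{-2}\,\lambda_1^{p-1}(\mcC_{1/2,\,3/2}),
$$
both constants on the right being strictly positive by standard Hodge theory on compact manifolds with boundary. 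On the other side, $U_1$ converges smoothly to $M_1$ minus a fixed conical ball, so $\lambda_1^p(U_1)\geq c_1>0$ by continuity of eigenvalues under such a perturbation. Plugging everything back, the last term in McGowan's inequality is $O(\eps^{-2}/\eps^{-2})=O(1)$, the right-hand side is bounded uniformly in $\eps$, and we conclude the existence of $\lambda_0 > 0$.

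The main obstacle is to set up McGowan's lemma so that its cohomological hypotheses are met. The intersection $U_1\cap U_2$ is homotopy equivalent to $\Sn$, so its $(p-1)$-th cohomology is nonzero for $p=1$ (and, symmetrically, for $p=n+1$, which is excluded here). In the sharp form of the lemma one must isolate these harmonic directions in the Mayer-Vietoris sequence and control the complementary (exact) part separately, typically by replacing $\lambda_1^{p-1}$ on the intersection by the first eigenvalue on $d$-exact $(p-1)$-forms there. The assumption $1\leq p\leq n$ together with the hypothesis $\lambda_\eps\neq 0$ is exactly what ensures that every such harmonic contribution is absorbed into the kernel of $\Delta_\eps$ (fixed by Mayer-Vietoris, as recorded in Theorem \ref{C}), so that only the positive part needs to be estimated; the scaling argument above then produces the uniform gap $\lambda_0$.
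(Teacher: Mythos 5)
Your overall strategy is the one the paper uses (McGowan's lemma applied to the two-set cover $M_1$-part $\cup$ rescaled $M_2$-part, with the scaling observation that $c_\rho/\mu^{p-1}(U_1\cap U_2)=O(\eps^{-2}/\eps^{-2})=O(1)$ and $1/\mu^p(U_2)=O(\eps^2)$), but the step you dispose of in one line is in fact the crux, and your justification of it is wrong. You write that $U_1=M_1(\eps/2)$ ``converges smoothly to $M_1$ minus a fixed conical ball, so $\lambda_1^p(U_1)\geq c_1>0$ by continuity of eigenvalues.'' The removed ball has radius $\eps/2\to 0$, so $U_1$ degenerates to $M_1$ minus a point; this is a singular perturbation, not a smooth one, and the uniform lower bound is a theorem of Ann\'e--Colbois \cite{AC}, not a continuity statement. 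Moreover that theorem does \emph{not} give a uniform bound for the full positive spectrum with absolute boundary conditions: there is a genuinely small eigenvalue in degree $m-1$, which happens to live in the coexact spectrum. This is why the paper is careful to run McGowan's lemma on the \emph{exact} $p$-spectrum only (where $\mu^p(U_1)$ is uniformly bounded below for $p\le n$ by \cite{AC}) and then to recover the full positive spectrum afterwards via the exact/coexact splitting and Hodge duality. Your version, stated for $\lambda_1^p(U_i)$ with no exactness restriction, would be defeated by precisely that small coexact eigenvalue.

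The second gap is the case $p=1$. You correctly identify that $H^{0}(U_1\cap U_2)\neq 0$ violates the cohomological hypothesis of the lemma, but your claim that the harmonic directions are ``absorbed into the kernel of $\Delta_\eps$'' is not an argument: the obstruction sits on the intersection $U_1\cap U_2\simeq\Sn$ (constants there cannot be primitives controlled by the exact-form eigenvalue), not in $\Ker\Delta_\eps$, and nothing in the Mayer--Vietoris computation of $H^*(M_\eps)$ removes it. The paper instead bypasses McGowan entirely for $p=1$: the exact $1$-form spectrum is identified with the positive spectrum on functions, whose uniform lower bound on $M_\eps$ was established in \cite{T}. You would need either that reduction or a refined version of the lemma that tolerates nonvanishing $H^{p-1}$ of the intersection; as written the $p=1$ case is not proved.
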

\begin{proof}We shall use the  McGowan's lemma as enonciated in \cite{GP}.
Recall that this lemma, in the spirit of Mayer Vietoris theorem, gives
control of positive eigenvalues in terms of positive eigenvalues of certain 
covers with certain boundary conditions.
We use the cover $M_\eps=M_1(\eps)\cup \eps. (M_2(1)\cup\mcC_{1,2}).$ Let
$$U_1=M_1(\eps)\,\text{ and }\, U_2=\eps. (M_2(1)\cup\mcC_{1,2})
$$
then $U_{1,2}=U_1\cap U_2=\eps.\mcC_{1,2}$ and $H^{p-1}(U_1\cap U_2)=0$ for
$1< p\leq n.$

The lemma 1 of \cite{GP} asserts that, in this case and for these values of $p,$ 
the first positive eigenvalue of the Hodge-Laplace operator on exact $p$-forms of
$M_\eps$ is, up to a power of 2, bounded from below by
$$\lambda_0(\eps)=\Big((\frac{1}{\mu^p(U_1)}+\frac{1}{\mu^p(U_2)})
(\frac{\omega_{p,m}c_\rho}{\mu^{p-1}(U_{1,2})}+1)
\Big)^{-1}
$$ 
where $\mu^k(U)$ is the first positive eigenvalue of the Laplacian acting on
exact $k$-forms of $U$ and satisfying absolute boundary conditions, $\omega_{p,m}$
is a combinatorial constant and $c_\rho$ is the square of an upper bound of the first 
derivative of a partition of 1 subordinate to the cover.

For us $c_\rho, \mu^p(U_2)$ and $\mu^{p-1}(U_{1,2})$ are all of order $\eps^{-2},$ but
$\mu^p(U_1)$ is bounded for $p\leq n$ as was shown in \cite{AC} (remark that the
small eigenvalue exibited here in degree $m-1$ is in the coexact spectrum).
This give a uniform bound for the exact spectrum of degree $p$
with $1<p\leq n$ but the exact spectrum for 1-forms
comes from the spectrum on function which has been studied in \cite{T}, thus
the exact spectrum is controled for $1\leq p\leq n$, by Hodge duality it gives
a control for all the positive spectrum in these degrees. Finally we can assert
that there exists $\lambda_0>0$ such that 
$\forall \eps,\, \lambda_0(\eps)>\lambda_0.$
\end{proof}
The proof of the main Proposition \ref{B} needs some useful notations and estimates, it is
the goal of the following section.

%%%%%%%%%%%%
\section{Estimates and tools}
%%%%%%%%%%%%

As in \cite{ACP} we use the following change of variables~: with
$${\phi_\eps}_{|M_1(\eps)} =\phi_{1,\eps} \;\text{ and } \;
{\phi_\eps}_{|M_2(1)}=\eps^{p-m/2}\phi_{2,\eps}$$ 
we write on the cone
$$\phi_{1,\eps} =dr \wedge r^{-(n/2-p+1)}\beta_{1,\eps} + 
              r^{-(n/2-p)}\alpha_{1,\eps} 
$$
and define $\sigma_1=(\beta_1,\alpha_1)=U(\phi_1).$

On the other part, it is more convenient to define $r=1-s$ for $s\in[0,1/2]$ and write
$\phi_{2,\eps} =(dr\wedge r^{-(n/2-p+1)} \beta_{2,\eps}+
r^{-(n/2-p)}\alpha_{2,\eps})$ near the boundary. Then we can define, for $r\in[1/2,1]$ 
(the boundary of $M_2(1)$ corresponds to $r=1$)
$$
\sigma_2(r)=(\beta_2(r),\alpha_2(r))=U(\phi_2).
$$

The $L_2$ norm, for a form supported on $M_1$ in the cone $\mcC_{\eps,1}$, has 
the expression
$$
\|\phi\|^2=\int_{M_1}|\sigma_1|^2 dr\wedge d\vol_{\Sn}+\int_{M_2}|\phi_2|^2
d\vol_{M_2}
$$
and the quadratic form on study is
\begin{equation}\label{quadform}
q(\phi)=\int_{M(\eps)}|(d+d^\ast)\phi|^2=\int_{M_1(\eps)}|UD_1U^\ast(\sigma_1)|^2+
\frac{1}{\eps^2}\int_{M_2(1)}|D_2(\phi_{2})|^2
\end{equation}
where $D_1$, resp. $D_2$, are the Gau\ss-Bonnet operator of $M_1$, resp. $M_2$, namely
$D_j=d+d^\ast$ acting on differential forms. In terms of 
$\sigma_1,$ which, a priori, belongs to 
$C^\infty([\eps,1[,C^\infty(\Lambda^{p-1}T^\ast\Sn)\oplus C^\infty(\Lambda^{p}T^\ast\Sn))$ 
the operator has, on the cone of $M_1,$ the expression
$$
 UD_1U^\ast =\begin{pmatrix} 0&1\\
                               -1&0   
                \end{pmatrix}
       \Big( \partial_r+\frac 1 r A\Big) \;\text{ with }\; A=
                 \begin{pmatrix} \dfrac n 2 -P&-D_0\\
                                         -D_0 & P-\dfrac n 2 \end{pmatrix} 
 $$
where $P$ is the operator of degree which multiplies by $p$ a $p$-form, and $D_0$ is the 
Gau\ss-Bonnet operator of the sphere $\Sn$.

While the Hodge-deRham operator has, in these coordinates, the expression
\begin{equation}\label{laplace}
U\Delta_1U^\ast=-\partial_r^2+\frac{1}{r^2}A(A+1).
\end{equation}

The same expressions are valid for $UD_2U^\ast$ and $U\Delta_2U^\ast$ near the boundary 
of $M_2(1)$ but we shall not use them because we need global estimates on this part.
% while near the boundary of $M_2(1)$
% $$D_2 =\begin{pmatrix} 0&1\\
%                                -1&0   
%                 \end{pmatrix}\Big( \partial_t+ A_0\Big) \;\text{ with }\; A_0=
%                  \begin{pmatrix}  0&-D_0\\
%                                          -D_0 & 0\end{pmatrix} 
% $$

The compatibility condition is, for the quadratic form,
$\eps^{1/2}\alpha_1(\eps)=\alpha_2(1)$ and $\eps^{1/2}\beta_1=\beta_2(1)$ or
\begin{equation}\label{recol0}
\sigma_2(1)=\eps^{1/2}\sigma_1(\eps) .
\end{equation}
The compatibility condition for the Hodge-deRham operator, of first order, is obtained 
by expressing that
$D\phi\sim (UD_1U^\ast\sigma_1,\frac 1\eps UD_2U^\ast\sigma_2)$ belongs to the 
domain of $D$. In terms of $\sigma$ it gives 
\begin{equation}\label{recol1}
\sigma_2'(1)=\eps^{3/2}\sigma_1'(\eps).
\end{equation}

Let $\xi_1$ be a cut-off function on $M_1$ around $p_0$:
$$0\leq r\leq 1/2\Rightarrow\xi_1(r)=1\hbox{ and }r\geq 1\Rightarrow\xi_1(r)=0.
$$
\begin{pro}For our given family $\phi_{\eps}$ satisfying 
$\Delta (\phi_{\eps})=\lambda_{\eps}\phi_{\eps}$ with $\lambda_\eps$ bounded,
the family $(1-\xi_1).\phi_{1,\eps}$ is bounded in $H^1(M_1).$
\end{pro}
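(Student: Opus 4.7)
The plan is to use the eigenvalue equation together with the decomposition \eqref{quadform} of the quadratic form to get a uniform $L^2$-bound on $D_1 \phi_{1,\eps}$, and then transfer this control through the cut-off, reducing the problem to standard elliptic regularity on the closed manifold $M_1$ (not on the $\eps$-dependent $M_1(\eps)$).

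First, normalize the family by $\|\phi_\eps\|_{L^2(M_\eps)} = 1$. Formula \eqref{quadform} and $\Delta_\eps\phi_\eps=\lambda_\eps\phi_\eps$ give at once
\begin{equation*}
\int_{M_1(\eps)} |D_1 \phi_{1,\eps}|^2 \, d\vol_{g_1} \;\leq\; q(\phi_\eps) \;=\; \lambda_\eps \;\leq\; C,
\end{equation*}
uniformly in $\eps$. Thus both $\phi_{1,\eps}$ and $D_1\phi_{1,\eps}$ are uniformly bounded in $L^2(M_1(\eps))$.

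Next, since $\xi_1\equiv 1$ on $\{r\le 1/2\}$, for every $\eps<1/2$ the form $(1-\xi_1)\phi_{1,\eps}$ is supported in $\{r\ge 1/2\}\subset M_1(\eps)$, hence extends by zero to a smooth form on the closed manifold $M_1$. By the Leibniz rule for $d$ and $d^\ast$,
\begin{equation*}
D_1\bigl((1-\xi_1)\phi_{1,\eps}\bigr) \;=\; (1-\xi_1)\,D_1\phi_{1,\eps} \;+\; [D_1,\,1-\xi_1]\,\phi_{1,\eps},
\end{equation*}
and the commutator is a zeroth-order operator whose pointwise norm is controlled by $|d\xi_1|$. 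Choosing $\xi_1$ smooth we obtain
\begin{equation*}
\bigl\|D_1\bigl((1-\xi_1)\phi_{1,\eps}\bigr)\bigr\|_{L^2(M_1)}
\;\leq\; \|D_1\phi_{1,\eps}\|_{L^2(M_1(\eps))} + \|d\xi_1\|_\infty\,\|\phi_{1,\eps}\|_{L^2(M_1(\eps))} \;\leq\; C'.
\end{equation*}

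Finally, $M_1$ is closed, so the Gau\ss-Bonnet operator $D_1=d+d^\ast$ is elliptic and essentially self-adjoint there, and its graph norm is equivalent to the $H^1$ norm (G\aa rding's inequality). Combined with the trivial bound $\|(1-\xi_1)\phi_{1,\eps}\|_{L^2(M_1)}\le 1$, this gives the desired uniform $H^1(M_1)$ estimate. There is no genuine obstacle here; the one point that must be checked is that $(1-\xi_1)\phi_{1,\eps}$ really lives on the fixed manifold $M_1$ rather than on the $\eps$-dependent $M_1(\eps)$. This is guaranteed by the fact that its support stays a fixed positive distance from $p_0$, so the boundary $\partial M_1(\eps)$ never enters the estimate and no $\eps$-dependent boundary term must be handled.
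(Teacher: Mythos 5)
Your argument is correct: the uniform bound $\int_{M_1(\eps)}|D_1\phi_{1,\eps}|^2\le q(\phi_\eps)=\lambda_\eps$, the support of $1-\xi_1$ away from $p_0$ allowing extension by zero to the closed manifold $M_1$, the zeroth-order commutator $[D_1,1-\xi_1]$, and G{\aa}rding's inequality on $M_1$ give exactly the claimed $H^1$ bound. The paper states this proposition without proof, treating it as immediate, and your write-up is precisely the standard argument the authors evidently had in mind.
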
  
Then it remains to study $\xi_1.\phi_{1,\eps}$ which can be expressed with the polar 
coordinates, this is the goal of the next section.
\begin{rem}The same cannot be done with the componant on $M_2$ or more precisely
this does not give what we want to prove, namely that this componant 
goes to 0 with $\eps$. To do so we have first to concider $\phi_{2,\eps}$ in the
domain of an elliptic operator, this is the main difficulty, in contrast with the 
case concerning functions. In fact we will decompose $\phi_{2,\eps}$ in a part which
clearly goes to 0 and an other part which belongs to the domain of an elliptic operator,
this operator is naturally $D_2$ but the point is to determine the boundary conditions.
\end{rem}

%Let  $\chi_1$ and $\chi_2$ be two cut-off functions which are zero near 
%the boundary
%of one of the two parts of the manifold $M(\eps),$ namely $\chi_1=0$ for $r\in[0,1/2]$
%and $\chi_2=0$ for $r\in[3/4,1[.$
%The families 
%$\chi_j.\phi_{j,\eps}$ are bounded in $H^1(M_j)$. Now we concentrate ourself to the
%behaviour at the interface, and suppose that the cut-off functions are chosen
%such that $(1-\chi_1)\phi_{1,\eps}$ is supported in 
%the cone $\mcC_{\eps,1}\subset M_1$ and $(1-\chi_2)\phi_{2,\eps}$ in the 
%cone $\mcC_{1/2,1}\subset M_2(1).$ Unfortunately we can not say that
%the $q$-norm of $(1-\chi_2)\phi_{2,\eps}$ is bounded. So we stay with 
%$\phi_{2,\eps}.$ We fix $$\xi=1-\chi_1.$$ 
%This cut-off function  is 1 for $0\leq r\leq 1/2$ and 0 for $r\geq 1$
%%%%%%%%%%%%%%
\subsection{Expression of the quadratic form}\tir For any $\phi$ such 
that the componant $\phi_{1}$ is supported in the cone $\mcC_{1,\eps}$, 
one has, with  $\sigma_1=U\phi_1$ and by the same calculus as in \cite{ACP}~: 
\begin{align*}
  \int_{\mcC_{\eps,1}} |D_1\phi|^2 d \vol_{g_\eps}
    &= \int_\eps^1 
     \left| \Bigl(\partial_r + \frac{1}{r} A \Bigr) 
            \sigma_1\right|^2 dr\\
    &= \int_\eps^1
     \Bigl[\, |\sigma_1'|^2+\frac{2}{r} \iprod {\sigma_1'} {A\sigma_1}
             +\frac{1}{r^2} |A\sigma_1|^2\,\Bigr] dr\\
    &= \int_\eps^1
     \Bigl[\, |\sigma_1'|^2+\partial_r 
           \Bigl( \frac{1}{r} \iprod {\sigma_1} {A\sigma_1} \Bigr)
       + \frac{1}{r^2} \bigl( \iprod {\sigma_1} {A\sigma_r} 
       + |A\sigma_1|^2 \bigr)\,\Bigr] dr\\
    &= \int_\eps^1
     \Bigl[\, |\sigma_1'|^2+\frac{1}{r^2}
              \iprod{\sigma_1} {(A+A^2)\sigma_1} \,\Bigr] dr
       - \frac{1}{\eps} \bigiprod {\sigma_1(\eps)}{A\sigma_1(\eps)}.
\end{align*}
we then have
\begin{multline}\label{fq1}
q(\phi)=\int_\eps^1
     \Bigl[\, |\sigma_1'|^2+\frac{1}{r^2}
              \iprod{\sigma_1} {(A+A^2)\sigma_1} \,\Bigr] dr
- \frac{1}{\eps} \bigiprod {\sigma_1(\eps)}{A\sigma_1(\eps)}\\
+ \frac{1}{\eps^2}\int_{M_2(1)}|D_2\phi_2|^2
\end{multline}
On the other hand we have, as well,
\begin{align*}
  \int_{\mcC_{1/2,1}} |D_2\phi|^2 d \vol_{g_\eps}
    &= \int_{1/2}^1 
     \left| \Bigl(\partial_r + \frac{1}{r} A \Bigr) 
            \sigma_2\right|^2 dr\\
      &= \int_{1/2}^1
     \Bigl[\, |\sigma_2'|^2+\frac{1}{r^2}
              \iprod{\sigma_2} {(A+A^2)\sigma_2} \,\Bigr] dr\\
      &\hspace{1cm} +\bigiprod {\sigma_2(1)}{A\sigma_2(1)}-
        \bigiprod {\sigma_2(1/2)}{A\sigma_2(1/2)}.
\end{align*}

Thus the first boundary terms annihilate, and one has also
\begin{multline}\label{fq0}
q(\phi)=\int_\eps^1
     \Bigl[\, |\sigma_1'|^2+\frac{1}{r^2}
              \iprod{\sigma_1} {(A+A^2)\sigma_1} \,\Bigr] dr+\\
 \frac{1}{\eps^2}\int_{1/2}^1
     \Bigl[\, |\sigma_2'|^2+\frac{1}{r^2}
              \iprod{\sigma_2} {(A+A^2)\sigma_2} \,\Bigr] dr
-\frac{1}{\eps^2}\bigiprod {\sigma_2(1/2)}{A\sigma_2(1/2)}.
\end{multline}
We remark that the boundary term
$-\bigiprod {\sigma_2(1/2)}{A\sigma_2(1/2)}$ is positif if 
$\sigma_2$ belongs to the eigenspace of $A$ with negative eigenvalues.
In fact we know the spectrum of $A$:
%%%%%%%%%%%%%%
\subsection{Spectrum of $A$}\tir It has been calculated in  \cite{BS}.
By their result, we have that the spectrum of $A$ is given
by the values $\gamma=\pm\frac 12\pm\sqrt{\mu^2+(\frac{n-1}{2}-p)^2}$ for $\mu^2$ 
covering the spectrum of $\Delta_{\Sn}$ acting on the coclosed p-forms.

Now the spectrum for the standard sphere has been calculated in \cite{GM} and
as a consequence one has $\mu^2\geq (n-p)(p+1)$ on coclosed $p$-forms, unless
$p=0$ for which we have in fact $\mu^2\geq (n-p)(p+1)$ on coexact $p$-forms 
({\it ie. } non constant functions). As a consequence
 \begin{equation*}
\mu^2+(\frac{n-1}{2}-p)^2\geq(n-p)(p+1)+(\frac{n+1}{2}-(p+1))^2=
(\frac{n+1}{2})^2
\end{equation*}
and then 
\begin{equation}\label{inf}
|\gamma|\geq\frac{n}{2}.
\end{equation}
For $p=0,$ the eigenvalues of $A$ corresponding to the constant function
are in fact $\pm\frac n2$ 
as we can see with the expression of $A$, so the minoration (\ref{inf}) is allways
valid and, in particular, $0\notin\text{Spec}(A).$
\subsubsection*{consequence}\tir The elliptic operator $A(A+1)$ is non negative 
(and positive if $n\geq 3$). Indeed $A(A+1)=(A+1/2)^2-1/4$ and the values of the 
eigenvalues of $A$ give the conclusion.

%%%%%%%%%%%%
\subsection{Equations satisfied}\tir  On the cones,
$\sigma=(\sigma_1,\sigma_2)$ satisfies the equations
\begin{align}\label{eq1}
\Big(-\partial_r^2+\frac{1}{r^2}A(A+1)\Big)\sigma_1&=\lambda_\eps\sigma_1\\
%\Big(-\partial_t^2+\Delta_\Sn\Big)
\label{eq2}\Delta_2 U^\ast\sigma_2&=\eps^2\lambda_\eps U^\ast\sigma_2
\end{align}

and the compatibility conditions have been given in (\ref{recol0}) and (\ref{recol1}):
\begin{equation}\sigma_2(1)=\eps^{1/2}\sigma_1(\eps),\quad
\sigma_2'(1)=\eps^{3/2}\sigma_1'(\eps).
%+\eps^{1/2}\begin{pmatrix} \dfrac n 2 -P & 0\\
%                      0 & P-\dfrac n 2 \end{pmatrix}\sigma_1(\eps)
\end{equation}
%we remark that $\begin{pmatrix} \dfrac n 2 -P & 0\\
%                       0 & P-\dfrac n 2 \end{pmatrix} =A-A_0.$

We decompose $\sigma_1$ along a base of eigenvectors of $A$~: 
$\sigma_1=\sum\sigma_1^\gamma $
and $A\sigma_1^\gamma =\gamma\sigma_1^\gamma .$

%%%%%%%%%%%%
\subsection{Boundary control}\tir 
We know that $\int_\eps^1|(\partial_r+\frac {A}{r})\sigma_1|^2\leq \lambda+1$
for $\eps$ small enough.This inequality stays valid for ${\xi_1}\sigma_1$ with a 
bigger constant: there exists $\Lambda>0$ such that for any $\eps>0$

$$\sum_{\gamma\in\text{Spec}(A)}\int_\eps^1|\partial_r({\xi_1}\sigma_1^\gamma)+
\frac {\gamma}{r}({\xi_1}\sigma_1^\gamma)|^2\leq \Lambda.
$$
Then, if we remark that
$\partial_r\sigma+\frac {\gamma}{r}\sigma=r^{-\gamma}\partial_r(r^\gamma\sigma)$
we can write, for $\gamma<0\Rightarrow\gamma\leq-\frac n2 ,$ 
\begin{equation}
(\eps^\gamma\sigma_1^\gamma(\eps))^2=
\Big(\int_\eps^1\partial_r(r^\gamma{\xi_1}\sigma_1^\gamma)\Big)^2
\leq \int_\eps^1 r^{2\gamma}\int_\eps^1|\partial_r({\xi_1}\sigma_1^\gamma)+
\frac {\gamma}{r}({\xi_1}\sigma_1^\gamma)|^2
\end{equation}
So $\sigma_1^\gamma(\eps)=O(\eps^{1/2}/\sqrt{|2\gamma+1|}).$
%For  $\gamma=-1/2,$ possible value for $n=2$, the same argument shows
%that $\sigma_1^\gamma(\eps)=O((\eps|\log\eps|)^{1/2}).$
This suggests that
the limit $\sigma$ is harmonic on $M_2(1)$ with boundary condition
$\Pi_{<0}\sigma_2=0,$ 
if $\Pi_{<0}$ denote the spectral projector of $A$ on the total eigenspace
of negative eigenvalues. The limit problem appearing here has a boundary 
condition of Atiyah-Patodi-Singer type \cite {APS}.
Indeed we have
\begin{pro}\label{bord}
There exists a constant $C$ such  that the boundary value satisfies, for all $\eps>0$
$$\|\Pi_{<0}\Big(\sigma_{1,\eps}(\eps)\Big)\|^2\leq C \eps.
$$
\end{pro}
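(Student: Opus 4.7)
The plan is to upgrade the pointwise one-mode estimate, already derived in the preceding ``Boundary control'' paragraph, into a summed bound over all negative-eigenvalue modes of $A$. All the ingredients are in place: the uniform $H^1$-type bound
$$ \sum_{\gamma\in\Spec(A)}\int_\eps^1\Big|\partial_r(\xi_1\sigma_1^\gamma)+\frac{\gamma}{r}(\xi_1\sigma_1^\gamma)\Big|^2\,dr \;\le\;\Lambda, $$
the identity $\partial_r\sigma+\frac{\gamma}{r}\sigma=r^{-\gamma}\partial_r(r^{\gamma}\sigma)$, the spectral lower bound $|\gamma|\ge n/2$, and the fact that the $\sigma_1^\gamma$'s form an orthogonal decomposition in $L^2(\Sn)$. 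The task is simply to combine them carefully.

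First I would fix a negative eigenvalue $\gamma$ of $A$ and apply Cauchy--Schwarz to the identity
$$ \eps^{\gamma}\sigma_1^\gamma(\eps) \;=\; -\int_{\eps}^{1}\partial_r\bigl(r^{\gamma}\xi_1\sigma_1^\gamma\bigr)\,dr \;=\; -\int_{\eps}^{1}r^{\gamma}\Bigl(\partial_r(\xi_1\sigma_1^\gamma)+\tfrac{\gamma}{r}(\xi_1\sigma_1^\gamma)\Bigr)\,dr, $$
which is legal since $\xi_1(1)=0$. This produces
$$ \bigl(\eps^{\gamma}\sigma_1^\gamma(\eps)\bigr)^2 \;\le\; \Bigl(\int_\eps^1 r^{2\gamma}\,dr\Bigr)\,\Lambda_\gamma, \qquad \Lambda_\gamma:=\int_\eps^1\Big|\partial_r(\xi_1\sigma_1^\gamma)+\tfrac{\gamma}{r}(\xi_1\sigma_1^\gamma)\Big|^2\,dr. $$
Since $\gamma\le -n/2$ forces $2\gamma+1<0$, we have $\int_\eps^1 r^{2\gamma}\,dr=\frac{\eps^{2\gamma+1}-1}{|2\gamma+1|}\le \frac{\eps^{2\gamma+1}}{|2\gamma+1|}$. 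Dividing through by $\eps^{2\gamma}$ yields the key one-mode bound
$$ |\sigma_1^\gamma(\eps)|^2 \;\le\; \frac{\eps}{|2\gamma+1|}\,\Lambda_\gamma. $$

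Next I would sum over $\gamma<0$. By the spectral gap \eqref{inf}, every negative eigenvalue satisfies $|2\gamma+1|\ge n-1\ge 1$, so
$$ \|\Pi_{<0}(\sigma_{1,\eps}(\eps))\|^2 \;=\; \sum_{\gamma<0}|\sigma_1^\gamma(\eps)|^2 \;\le\; \eps\sum_{\gamma<0}\frac{\Lambda_\gamma}{|2\gamma+1|} \;\le\; \eps\cdot\Lambda, $$
which is the announced estimate with $C=\Lambda$.

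I do not anticipate a real obstacle: the computation is a direct orthogonal-mode analogue of the unweighted $r^{\gamma}$-trick already performed in the excerpt. The only point of vigilance is to verify that the energy bound $\sum_\gamma\Lambda_\gamma\le\Lambda$ holds for the \emph{truncated} field $\xi_1\sigma_1^\gamma$ uniformly in $\eps$; this follows from $\Delta_\eps\phi_\eps=\lambda_\eps\phi_\eps$ with $\lambda_\eps$ bounded together with the product rule for $\partial_r(\xi_1\sigma_1)$, since $\xi_1$ and $\xi_1'$ are supported away from $r=\eps$ and the contribution of $\xi_1'$ is controlled by $\|\phi_\eps\|_{L^2}=1$.
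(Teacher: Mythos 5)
Your argument is correct, and it reaches the stated bound with an admissible constant: the mode-by-mode Cauchy--Schwarz estimate gives $|\sigma_1^\gamma(\eps)|^2\le \eps\,\Lambda_\gamma/|2\gamma+1|$, and since every negative eigenvalue satisfies $\gamma\le -n/2$, hence $|2\gamma+1|\ge n-1\ge 1$ for $n\ge 2$, the sum over $\gamma<0$ is dominated by $\eps\Lambda$. This is, however, not the route the paper takes for the proposition itself. The per-mode computation you rely on appears in the paper only as a heuristic in the ``Boundary control'' paragraph (ending with ``$\sigma_1^\gamma(\eps)=O(\eps^{1/2}/\sqrt{|2\gamma+1|})$''), whereas the actual proof works globally with the projector $\Pi_{<0}$: it inserts $\Pi_{<0}(\xi_1\sigma_{1,\eps})$ into the integrated-by-parts expression \eqref{fq1} of the quadratic form, discards the nonnegative bulk term using $A(A+1)\ge 0$, and reads the boundary term off directly via $-A\circ\Pi_{<0}\ge \tfrac n2$, obtaining $q\ge \tfrac{n}{2\eps}\|\Pi_{<0}\sigma_{1,\eps}(\eps)\|^2$ and hence $C=2\Lambda/n$. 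The two proofs consume exactly the same energy bound $\Lambda$; yours is more elementary (fundamental theorem of calculus plus Cauchy--Schwarz on each eigenmode, then summation) at the cost of tracking the mode-dependent factor $|2\gamma+1|^{-1}$ and the summability of the $\Lambda_\gamma$, while the paper's version avoids any mode decomposition in the proof proper and produces the constant in one stroke from the operator inequality on $\Im\Pi_{<0}$. Your closing remark about verifying $\sum_\gamma\Lambda_\gamma\le\Lambda$ for the truncated field is exactly the uniform bound the paper asserts at the start of the ``Boundary control'' paragraph, so nothing is missing.
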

\begin{proof} We know that $q({\xi_1}\phi_{1,\eps},\phi_{2,\eps})$
is bounded by $\Lambda,$ on the other hand the expression
of the quadratic form (\ref{fq1}) can be done with respect to the 
decomposition along $\Im\Pi_{>0}$ and $\Im\Pi_{<0}.$ Namely:
\begin{align*}q({\xi_1}\phi_{1,\eps},\phi_{2,\eps})
&=\int_\eps^1 \left| \Bigl(\partial_r + \frac{1}{r} A \Bigr)\Pi_{<0}
(\xi_1 \sigma_{1,\eps})\right|^2 dr\\
&\hspace{2cm}+\int_\eps^1 \left| \Bigl(\partial_r + \frac{1}{r} A \Bigr)\Pi_{>0}
(\xi_1 \sigma_{1,\eps})\right|^2 dr
+ \frac{1}{\eps^2}\int_{M_2(1)}|D_2\phi_2|^2\\
&\geq\int_\eps^1 \left| \Bigl(\partial_r + \frac{1}{r} A \Bigr)\Pi_{<0}
(\xi_1 \sigma_{1,\eps})\right|^2 dr\\
&\geq\int_\eps^1
     \Bigl[\, |\Pi_{<0}(\xi_1\sigma_{1,\eps})'|^2+\frac{1}{r^2}
\iprod{\Pi_{<0}(\xi_1\sigma_{1,\eps})} {(A+A^2)\Pi_{<0}(\xi_1\sigma_{1,\eps})} \,\Bigr] dr\\
&\hspace{2cm}- \frac{1}{\eps} \bigiprod {\Pi_{<0}\sigma_1(\eps)}
{A\circ\Pi_{<0}\sigma_1(\eps)}\\
&\geq\frac{n}{2\eps}\|\Pi_{<0}\sigma_{1,\eps}(\eps)\|^2
\end{align*}
because $A(A+1)$ is non negative and $-A\circ\Pi_{<0}\geq\frac{n}{2}.$
\end{proof}
%%%%%%%%%%%%
\subsection{Limit problem}\label{lim} \tir We study here good candidates for the limit
Gau\ss-Bonnet operator. On $M_1$ the problem is clear, the question here 
is to identify the boundary conditions on $M_2(1).$ \\
$\bullet$ On $M_1$ the natural problem is the Friedrich extension of $D_1$ 
on the cone, it is not a real conical singularity and $\Delta_1=D_1^\ast\circ D_1$ is 
the usual Hodge-de Rham operator
(we can see with the expression of $\phi_1$ using the Bessel functions, see 
appendix, that
$\sum_\gamma |d_{\eps,\bargamma}|^2\eps^{-2\bargamma+1}/2\bargamma-1$
is bounded so $\lim_{\eps\to 0}\sum_\gamma |d_{\eps,\bargamma}|^2=0$ 
and the limit $U\phi$ has only regular components, {\it ie.} in terms of 
$f_\bargamma(r)$).
\\$\bullet$ For $n\geq 2$ the  forms on $M_2(1)$ satisfying $D_2(\phi)=0$
$\Pi_{<0}\circ U(\phi)=0$ on the boundary are precisely the $L_2$  
forms in $\Ker(D_2)$ on the large manifold $\widetilde{M_2}$  obtained 
from $M_2(1)$ by gluing a conic cylinder $[1,\infty[\times \Sn$ with metric 
$dr^2+r^2 h$, {\it ie.} the exterior of the sphere in $\R^{n+1}.$

Indeed, these $L_2$ forms must satisfy $(\partial_r+\frac 1r A)\sigma=0$ 
or, $\forall\gamma\in\text{Spec}(A),\,\exists
\sigma_0^\gamma\in \ker(A-\gamma)$ such that 
$\sigma^\gamma=r^{-\gamma} \sigma_0^\gamma\in L_2$ which is possible only
for $\gamma>1/2.$ This limit problem is of the category \emph{non parabolic
at infinity} in the terminology of Carron \cite{C}, see particularly the theorem
2.1 there, then as a consequence of theorem 0.4 of the same paper we know
that its kernel is finite dimensional, more precisely it gives:

\begin{pro}\label{prop:D_2-elliptic}
The operator $D_2$ acting on the forms of $M_2(1),$ with
the boundary condition $\Pi_{<0}\circ U=0,$ is elliptic in the sens that
the $H_1$ norm of elements of the domain is controled by the norm of the graph. 
Let's $\mcD_2$ denote this operator.
\end{pro}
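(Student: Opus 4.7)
The plan is to establish the estimate
\begin{equation*}
\|\phi\|_{H^1(M_2(1))}\le C\bigl(\|\phi\|_{L^2}+\|D_2\phi\|_{L^2}\bigr)\quad\text{for every }\phi\in\Dom(\mcD_2),
\end{equation*}
which is precisely the claimed ellipticity. Via a partition of unity it suffices to treat two separate regions: a compact subset of $\inte(M_2(1))$ and the collar $\mcC_{1/2,1}=[1/2,1]\times\Sn$. On the interior piece, $D_2=d+d^{\ast}$ is a first-order elliptic operator with no boundary condition involved, so standard interior elliptic regularity gives the bound at once, and in particular controls the radial trace $\sigma(1/2)=U\phi(1/2)$ of $\sigma=U\phi$.

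For the collar I would reuse the integration by parts already carried out in the derivation of \eqref{fq0}. Restricted to $\mcC_{1/2,1}$ that computation yields
\begin{equation*}
\int_{\mcC_{1/2,1}}\!|D_2\phi|^{2}\,d\vol=\int_{1/2}^{1}\!\Bigl[|\sigma'|^{2}+\frac{1}{r^{2}}\bigiprod{\sigma}{A(A+1)\sigma}\Bigr]dr+\bigiprod{\sigma(1)}{A\sigma(1)}-\bigiprod{\sigma(1/2)}{A\sigma(1/2)}.
\end{equation*}
The APS condition $\Pi_{<0}\sigma(1)=0$ retains only the $\gamma>0$ spectral components at $r=1$, so
\begin{equation*}
\bigiprod{\sigma(1)}{A\sigma(1)}=\sum_{\gamma>0}\gamma\,|\sigma^{\gamma}(1)|^{2}\ge 0,
\end{equation*}
and the boundary term at $r=1$ now helps rather than obstructs the estimate. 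Since by \eqref{inf} the spectrum of $A$ satisfies $|\gamma|\ge n/2$, the operator $A(A+1)$ is non-negative and, together with $|A|\ge n/2$, provides angular coercivity, whence
\begin{equation*}
\int_{1/2}^{1}\!\Bigl[|\sigma'|^{2}+r^{-2}\bigl(|\sigma|^{2}+|A\sigma|^{2}\bigr)\Bigr]dr\le C\bigl(\|D_2\phi\|_{L^{2}}^{2}+\bigl|\bigiprod{\sigma(1/2)}{A\sigma(1/2)}\bigr|\bigr).
\end{equation*}
Via Parseval and the conical form of the metric the left-hand side is equivalent to $\|\phi\|_{H^{1}}^{2}$ on the collar, while the boundary term at $r=1/2$ is controlled by the interior estimate. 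Combining the two regions closes the argument.

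The main obstacle is the sign of the boundary term at $r=1$: without any condition it is indefinite and the quadratic form is not coercive, yet imposing local (Dirichlet or Neumann) conditions on every mode would be much too restrictive and incompatible with the traces of limiting eigenforms coming from $M_{\eps}$. The APS projection $\Pi_{<0}\sigma(1)=0$ is the unique (non-local) boundary condition that both makes $\bigiprod{\sigma(1)}{A\sigma(1)}$ non-negative, turning the integration-by-parts identity into a true coercivity estimate, and is consistent, by \pref{bord}, with the traces of the limiting eigenforms. This is the concrete local counterpart of the Fredholm picture for the non-parabolic operator on $\widetilde{M}_2$ in Carron's \cite{C}.
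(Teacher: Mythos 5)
Your proof is a genuinely different route from the paper's. The paper establishes the $H^1$ bound by extending $\phi$ harmonically to the complete asymptotically conical manifold $\widetilde{M}_2$ and invoking Carron's theory of operators non-parabolic at infinity (his Theorem 1.2 gives the local $H^1$ control from the graph norm, and an explicit mode-by-mode computation bounds the $L^2$ norm of the extension on the added cylinder by $\|D_2\phi\|$). You instead prove the estimate directly by a Rellich-type energy identity on the collar $\mcC_{1/2,1}$, using the sign of the APS boundary term at $r=1$, and glue this to interior elliptic regularity which controls the trace at $r=1/2$. Your route is more elementary and self-contained (it is closer in spirit to the paper's ``alternative proof'' via the APS parametrix, but replaces the parametrix by a coercivity estimate); the paper's route buys, in addition, the identification of $\Ker\mcD_2$ with the $L^2$ kernel of $D_2$ on $\widetilde{M}_2$, which is exploited later for the cohomological interpretation. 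Note also that, like the paper's own argument, yours is an a priori estimate for $\phi$ already in $H^1$; that matches the level of rigour of the original.

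There is, however, one concrete gap in the step ``$A(A+1)\ge 0$ together with $|A|\ge n/2$ provides angular coercivity, whence \dots''. To pass from $\int r^{-2}\iprod{\sigma}{A(A+1)\sigma}\,dr$ to $\int r^{-2}(|\sigma|^2+|A\sigma|^2)\,dr$ you need a pointwise spectral inequality $\gamma(\gamma+1)\ge c\,(1+\gamma^2)$ on $\Spec(A)$. This holds for $n\ge 3$ (where $|\gamma|\ge 3/2$), but fails for $n=2$, i.e.\ $m=3$, which the paper allows: the constant-function modes give $\gamma=\pm n/2=\pm 1$, and for $\gamma=-1$ one has $\gamma(\gamma+1)=0$ while $1+\gamma^2=2$. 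This is exactly why the paper is careful to say that $A(A+1)$ is non-negative ``and positive if $n\ge 3$''. Your inequality as displayed is therefore false for $n=2$. The gap is repairable: for the finitely many eigenvalues $\gamma$ with $\gamma(\gamma+1)$ below any fixed threshold, the term $\int_{1/2}^1 r^{-2}(1+\gamma^2)|\sigma^\gamma|^2\,dr$ can be recovered from $\int_{1/2}^1|(\sigma^\gamma)'|^2\,dr$ (which your identity does control) together with the trace $|\sigma^\gamma(1/2)|^2$ (controlled by the interior estimate), via the one-dimensional Poincar\'e inequality $|\sigma^\gamma(r)|\le|\sigma^\gamma(1/2)|+\sqrt{r-1/2}\,\|(\sigma^\gamma)'\|_{L^2}$. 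You should add this step explicitly; without it the argument does not cover $m=3$.
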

\begin{cor} The kernel of $\mcD_2$ is of finite dimension and can be
identify with a subspace of the  total space
$\sum_p H^p(M_2(1))$ of absolute cohomology.
\end{cor}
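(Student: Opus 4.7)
The plan is to derive both parts of the corollary from Proposition~\ref{prop:D_2-elliptic} together with the extension-to-$\widetilde{M_2}$ construction sketched in the paragraph preceding it. For the finite-dimensionality I would argue as follows. By Proposition~\ref{prop:D_2-elliptic} the $H^1$ norm is controlled by the graph norm of $\mcD_2$; on $\ker \mcD_2$ the graph norm reduces to the $L^2$ norm, so any $L^2$-bounded sequence in $\ker \mcD_2$ is $H^1$-bounded on the compact manifold with boundary $M_2(1)$. By Rellich's theorem it is then precompact in $L^2$, so the closed unit ball of $\ker \mcD_2$ is compact in $\ker \mcD_2$ itself, forcing $\dim \ker \mcD_2 < \infty$.

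For the cohomological statement, an element $\phi \in \ker \mcD_2$ satisfies both $d\phi = 0$ and $d^\ast \phi = 0$ (the two operators shift degree in opposite directions, so $(d+d^\ast)\phi = 0$ decouples), hence each homogeneous component $\phi_p$ is closed and defines a de Rham class $[\phi_p] \in H^p(M_2(1);\R)$. This assembles into a linear map $\mathfrak{c} : \ker \mcD_2 \to \bigoplus_p H^p(M_2(1); \R)$ and the task is to show $\mathfrak{c}$ is injective. The key observation, already recorded just before Proposition~\ref{prop:D_2-elliptic}, is that the APS condition $\Pi_{<0}\circ U = 0$ selects exactly those modes whose radial continuation $r^{-\gamma}\sigma_0^\gamma$ is $L^2$ on the conical end $[1,\infty)\times \Sn$; this yields an isomorphism between $\ker \mcD_2$ and the $L^2$ kernel of $D_2$ on the complete asymptotically conical manifold $\widetilde{M_2}$. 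The spectral bound \eqref{inf}, namely $|\gamma|\geq n/2 > 1/2$, shows that $D_2$ is non-parabolic at infinity in the sense of \cite{C}, so Carron's theorem 0.4 embeds this $L^2$ kernel in the absolute de Rham cohomology of $\widetilde{M_2}$. Since $\widetilde{M_2}$ deformation retracts onto $M_2(1)$ by contracting the adjoined cone, one has $H^\ast(\widetilde{M_2};\R) \cong H^\ast(M_2(1); \R)$, and under this chain of identifications the Carron embedding coincides with $\mathfrak{c}$.

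The main obstacle is not the Rellich argument but the cohomological injectivity, which rests entirely on Carron's machinery for operators non-parabolic at infinity: one must verify that the extension procedure $\phi \mapsto \tilde\phi$ is really a vector space isomorphism between $\ker \mcD_2$ and $\ker_{L^2} D_{\widetilde{M_2}}$, and that the non-parabolicity hypothesis of \cite{C} genuinely applies to $\widetilde{M_2}$ (the spectral gap \eqref{inf} is exactly what supplies it). Once these two points are in place the rest is formal.
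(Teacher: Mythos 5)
Your proposal is essentially correct and, for the cohomological half, follows the same route the paper intends: the paper states the corollary without a separate proof, deriving it from the preceding discussion in which $\Ker\mcD_2$ is identified with the $L_2$ kernel of the Gau\ss--Bonnet operator on the complete manifold $\widetilde M_2$, which is non-parabolic at infinity, and then invoking Carron's Theorem~0.4. Where you genuinely diverge is on finite-dimensionality: the paper gets it directly from Carron's Theorem~0.4, whereas you deduce it from the elliptic estimate of Proposition~\ref{prop:D_2-elliptic} via Rellich on the compact manifold with boundary $M_2(1)$ (graph norm $=$ $L_2$ norm on the kernel, hence the unit ball is $H^1$-bounded, hence $L_2$-compact, hence the kernel is finite-dimensional by Riesz). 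That argument is correct and is arguably preferable here, since it is self-contained once the proposition is proved and does not re-import the non-parabolicity machinery; what the paper's route buys instead is that finite-dimensionality comes for free from the same citation that sets up the whole limit problem.

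One step deserves more care. You justify $d\phi=0$ and $d^\ast\phi=0$ for $\phi\in\Ker\mcD_2$ by saying that $d$ and $d^\ast$ shift degree in opposite directions, so $(d+d^\ast)\phi=0$ decouples. That is immediate only when $\phi$ is homogeneous of a single degree $p$ (then $d\phi$ and $d^\ast\phi$ live in degrees $p+1$ and $p-1$). For a mixed-degree $\phi=\sum_p\phi_p$ the degree-$q$ component of $(d+d^\ast)\phi$ is $d\phi_{q-1}+d^\ast\phi_{q+1}$, and these two terms have the \emph{same} degree and could a priori cancel; the decoupling then requires an integration by parts, $\iprod{d\phi_{q-1}}{d^\ast\phi_{q+1}}=0$, which on $M_2(1)$ produces a boundary term that the APS condition does not obviously kill. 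The clean fix is exactly the passage to $\widetilde M_2$ that you already make: on the complete manifold Gaffney's theorem gives $\Ker_{L_2}(d+d^\ast)=\Ker_{L_2}d\cap\Ker_{L_2}d^\ast$, and this is degree-graded. (Alternatively, note that the boundary condition $\Pi_{<0}\circ U=0$ is imposed degree by degree, so one may work with each homogeneous component separately from the outset.) With that repaired, your chain of identifications --- $\Ker\mcD_2\cong\Ker_{L_2}D$ on $\widetilde M_2$ using the bound \eqref{inf} to see that only the $\gamma>0$ modes extend in $L_2$, then the embedding into $H^\ast(\widetilde M_2;\R)\cong H^\ast(M_2(1);\R)$ --- is exactly what the paper relies on, and is consistent with Corollary~\ref{coho2} later identifying the image precisely.
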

We shall see in Corollary \ref{coho2} below that this kernel is in fact the  total 
space $\sum_p H^p(M_2).$
\begin{proof}
We show that there exists a constant $C>0$ such that for each
$\phi\in H^1(\Lambda T^\ast M_2(1))$ satisfying $\Pi_{<0}\circ U(\phi)=0,$
then
$$\|\phi\|_{H^1}\leq C(\|\phi\|_{L_2}+\|D_2(\phi)\|_{L_2}).
$$
Thus $\mcD_2$ is closable. 

Denote, for such a $\phi,$ by $\tilde\phi$ its
harmonic prolongation on $\widetilde M_2.$ Then $\tilde\phi$ is in the domain of
the Dirac operator on $\widetilde M_2$ which is elliptic, it means that for each 
smooth function $f$ with compact support there exists a constant $C_f>0$
such that 
$$\forall\psi \in\dom(D_2) \quad\|f.\psi \|_{H^1}\leq C_f
( \|\psi \|_{L_2}+\|D_2(\psi) \|_{L_2})
$$
(it is the fact that an operator 'non parabolic at infinity' is continue
from its domain to $H^1_{loc}$, Theorem 1.2 of Carron)

If we apply this inequality for some $f=1$ on $M_2(1)$ and $\psi=\tilde\phi$
we obtain in particular that
$$\|\phi \|_{H^1(M_2(1))}\leq C( \|\tilde\phi \|_{L_2}+\|D_2(\tilde\phi) \|_{L_2})
$$
with $C=C_f.$ We remark first that 
$$
\|D_2(\tilde\phi) \|_{L_2(\widetilde M_2)}=\|D_2(\phi) \|_{L_2(M_2(1))}.
$$ 
Now we can write, by the use of cut-off functions, $\phi=\phi_0+\bar\phi$ with 
$\phi_0$ null near the boundary and $\bar\phi$ supported in $1/2\leq r\leq 1$. Then 
$\tilde\phi_0=0$ so, for the control of $\|\tilde\phi \|_{L_2},$ we can suppose that 
$\phi=\bar\phi.$ We write $U\phi=\sigma$ and $\sigma=\sum_\gamma \sigma^\gamma$ on the 
eigenspaces of $A.$ We have
$$\|\tilde\phi\|^2_{L_2(\R^m-B(0,1))}=\sum_{\gamma>0}\frac{1}{2\gamma-1}|\sigma^\gamma(1)|^2,
$$
now $\gamma\geq 1$ and $\sigma^\gamma(1/2)=0,$ so one has 
$\sigma^\gamma(1)=\int_{1/2}^1\partial_r(r^{\gamma}\sigma^\gamma)$ and
by Cauchy-Schwarz inequality
$$|\sigma^\gamma(1)|^2\leq\int_{1/2}^1 (r^{-\gamma}\partial_r(r^{\gamma}\sigma^\gamma))^2
\int_{1/2}^1 r^{2\gamma}
$$
or
$$|\sigma^\gamma(1)|^2\leq\|(\partial_r+\frac{1}{r}A)(\sigma^\gamma)\|^2\frac{1}{2\gamma+1}
$$
as a consequence
$$\sum_{\gamma>0}\frac{1}{2\gamma-1}|\sigma^\gamma(1)|^2\leq
\sum_{\gamma>0}\|(\partial_r+\frac{1}{r}A)(\sigma^\gamma)\|^2\frac{1}{4\gamma^2-1}\leq\|D_2(\phi)\|^2
$$
then, changing the constant, we have also
$$\|\phi \|_{H^1(M_2(1))}\leq C( \|\phi \|_{L_2(M_2(1)}+\|D_2(\phi) \|_{L_2(M_2(1))}.
$$
\end{proof}
\subsubsection*{alternative proof of the proposition, in the spirit of \cite{APS}}\tir 
To study this boundary condition it is better to write again the
$p$-form near the boundary as $\phi_{2}=dr \wedge r^{-(n/2-p+1)}\beta_2 + 
              r^{-(n/2-p)}\alpha_{2}$
with, as before, $U(\phi_{2})=\sigma_2=(\beta_2,\alpha_{2})$ .
On the cone $r\in[1/2,1],$  $UD_2 U^\ast=\partial_r+\frac 1r A$
and we can construct, as in \cite{APS} a parametrix of $D_2$ by 
gluing an interior parametrix with one constructed on the 'long' cone
$r\in]0,1]$ as follows~:

Given a form $\psi$ on $M_2(1),$ if we look for a form $\phi$ such that 
$D_2\phi=\psi,$ we write $\psi$ as the sum of two terms, the first one 
with support in the neighborood of the boundary and the second one nul
near the boundary. On the second term we apply an interior parametrix $Q_0$
of the elliptic operator $D_2$. Let's now supposes that $\phi$ is 
supported in the cone $r\in [1/2,1]$. We decompose $U\psi$ along the
eigenspaces of $A:U\psi=\sum_\gamma \psi^\gamma$ and if also
$U\phi=\sum_\gamma \phi^\gamma$, then $\phi^\gamma$ must satisfy
$$\partial_r \phi^\gamma+\frac{\gamma}{r}\phi^\gamma=
r^{-\gamma}\partial_r (r^{\gamma}\phi^\gamma)=\psi^\gamma.
$$
We take the solution
\begin{align}
\phi^\gamma=r^{-\gamma}\int_1^r \rho^\gamma \psi^\gamma(\rho)d\rho 
&\text{ if }\gamma<0\\
\phi^\gamma=r^{-\gamma}\int_0^r \rho^\gamma \psi^\gamma(\rho)d\rho 
&\text{ if }\gamma>0
\end{align}
Thus $\gamma<0\Rightarrow \phi^\gamma(1)=0.$
It is now easy to verify that $\mcD_2$ satisfies the property (SE) of
\cite{L} p. 54 (with $\rho(x)=\sqrt x$). 

This fact and the vacuity of 
Spec$(A)\cap]-1,+1[$ assure the construction of the parametrix on the cone, 
see \cite{L} and also \cite{BS} who make this construction. In fact the 
parametrix on the cone gives only $H^1$ regularity with weight function, but 
we will cut the singular point for $M_2(1)$, these results are in \cite{L}
Proposition 1.3.12 and following.
%%%%%%%%%%%%
\subsection{Boundedness}\tir Recall that $A(A+1)$ is non negative.

\begin{pro}\label{phi2}
Let $\chi$ be a cut-off function supported in $[3/4,1[ $ equal
to 1 on $[7/8,1[ $ and $\sigma_{2,\eps}=U(\phi_{2,\eps}).$
The family 
$\psi_{2,\eps}=\phi_{2,\eps}-U^\ast\Big(\Pi_{<0}(\chi\sigma_{2,\eps})\Big)$ belongs 
to the domain of $\mcD_2$,  
is bounded in $H^1(M_2)$ and satisfies $\lim_{\eps\to 0}\|\psi_{2,\eps}-\phi_{2,\eps}\|=0$
and
\begin{equation}\label{D2}\lim_{\eps\to 0}\|D_2(\psi_{2,\eps}-\phi_{2,\eps})\|=O(\sqrt\eps)
\end{equation}
\end{pro}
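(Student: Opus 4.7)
The plan is to verify the four claims in order: that $\psi_{2,\eps}\in\dom(\mcD_2)$, that $\|\phi_{2,\eps}-\psi_{2,\eps}\|\to 0$, that $\|D_2(\phi_{2,\eps}-\psi_{2,\eps})\|=O(\sqrt\eps)$, and finally that $\psi_{2,\eps}$ is uniformly bounded in $H^1(M_2(1))$. The APS boundary condition is immediate from $\chi(1)=1$: at $r=1$ one has $U(\psi_{2,\eps})(1)=\sigma_{2,\eps}(1)-\Pi_{<0}\sigma_{2,\eps}(1)=\Pi_{>0}\sigma_{2,\eps}(1)$, so $\Pi_{<0}\circ U(\psi_{2,\eps})$ vanishes on $\partial M_2(1)$, and $\psi_{2,\eps}\in\dom(\mcD_2)$.

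The core step will be a pointwise-in-$r$ estimate of $\|\Pi_{<0}\sigma_{2,\eps}(r)\|_{L^2(\Sn)}$, uniform on the collar $r\in[1/2,1]$. Decomposing $\sigma_{2,\eps}=\sum_\gamma \sigma^\gamma_{2,\eps}$ along the eigenspaces of $A$, the identity $\partial_\rho(\rho^\gamma\sigma^\gamma)=\rho^\gamma(\partial_\rho+\gamma/\rho)\sigma^\gamma$ integrated from $r$ to $1$ yields
\begin{equation*}
\sigma^\gamma_{2,\eps}(r)=r^{-\gamma}\sigma^\gamma_{2,\eps}(1)-r^{-\gamma}\int_r^1\rho^\gamma\Bigl(\partial_\rho+\frac{\gamma}{\rho}\Bigr)\sigma^\gamma_{2,\eps}(\rho)\,d\rho.
\end{equation*}
For $\gamma<0$ and $r\in[1/2,1]$ both $r^{-2\gamma}\le 1$ and $r^{-2\gamma}\int_r^1\rho^{2\gamma}d\rho\le 1-r$ hold \emph{uniformly in $\gamma$}; Cauchy--Schwarz together with orthogonality of the eigenspaces of $A$ then give
\begin{equation*}
\|\Pi_{<0}\sigma_{2,\eps}(r)\|^2\le 2\|\Pi_{<0}\sigma_{2,\eps}(1)\|^2+2(1-r)\int_r^1\Bigl\|\Bigl(\partial_\rho+\frac{A}{\rho}\Bigr)\sigma_{2,\eps}\Bigr\|^2 d\rho.
\end{equation*}

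Next I would feed in two independent bounds. The compatibility relation \eqref{recol0} and Proposition \ref{bord} give $\|\Pi_{<0}\sigma_{2,\eps}(1)\|^2=\eps\|\Pi_{<0}\sigma_{1,\eps}(\eps)\|^2=O(\eps^2)$, and the eigenvalue equation \eqref{eq2} together with the decomposition \eqref{quadform} of $q$ forces $\int_{M_2(1)}|D_2\phi_{2,\eps}|^2\le\eps^2\lambda_\eps=O(\eps^2)$. These produce $\|\Pi_{<0}\sigma_{2,\eps}(r)\|^2=O(\eps^2)$ uniformly on $[1/2,1]$. Integrating against $\chi^2$ gives $\|\phi_{2,\eps}-\psi_{2,\eps}\|=O(\eps)\to 0$. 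Expanding $(\partial_r+A/r)(\chi\,\Pi_{<0}\sigma_{2,\eps})=\chi'\,\Pi_{<0}\sigma_{2,\eps}+\chi\,\Pi_{<0}(\partial_r+A/r)\sigma_{2,\eps}$ and controlling the two summands by the pointwise estimate and by the $O(\eps^2)$ bound on $\|D_2\phi_{2,\eps}\|^2$ respectively yields $\|D_2(\phi_{2,\eps}-\psi_{2,\eps})\|=O(\eps)$, in particular the claimed $O(\sqrt\eps)$. Finally, since $\psi_{2,\eps}\in\dom(\mcD_2)$, the ellipticity of Proposition \ref{prop:D_2-elliptic} gives $\|\psi_{2,\eps}\|_{H^1}\le C(\|\psi_{2,\eps}\|_{L^2}+\|\mcD_2\psi_{2,\eps}\|_{L^2})=O(1)$, using $\|\phi_{2,\eps}\|_{L^2}\le 1$, $\|D_2\phi_{2,\eps}\|_{L^2}=O(\eps)$ and the smallness of the correction terms just established.

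The main obstacle is the uniformity of the pointwise estimate in the spectral parameter $\gamma<0$: without it, the $\gamma$-components could not be summed against the boundary and $D_2$ data. This uniformity rests on the sign condition $\gamma<0$ (strengthened to $\gamma\le-n/2$ by \eqref{inf}) together with $r\le 1$, which simultaneously tame the factors $r^{-2\gamma}$ and $r^{-2\gamma}\int_r^1\rho^{2\gamma}d\rho$ arising from the integrating-factor representation. It is precisely the fact that $0\notin\Spec(A)$, and more specifically the gap $|\gamma|\ge n/2$, that makes the APS projector $\Pi_{<0}$ the natural choice for defining $\mcD_2$.
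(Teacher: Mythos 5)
Your argument is correct, and its overall architecture (domain membership from $\chi(1)=1$, a collar estimate on $\Pi_{<0}\sigma_{2,\eps}$, the Leibniz expansion of $D_2(\chi\,\Pi_{<0}\sigma_{2,\eps})$, and ellipticity of $\mcD_2$ for the $H^1$ bound) matches the paper's. The one genuinely different ingredient is how you obtain the collar estimate. The paper writes $|\Pi_{<0}\sigma_2(r)|^2$ as $|\Pi_{<0}\sigma_2(1/2)|^2$ plus $2\int_{1/2}^r\la\Pi_{<0}\sigma_2',\Pi_{<0}\sigma_2\ra$, controls the anchor value at $r=1/2$ by the sign of the boundary term in \eqref{fq0} together with $-A\circ\Pi_{<0}\ge n/2$, and bounds the integral by Cauchy--Schwarz against $\|\sigma_2\|_{L_2}\le 1$; this yields $|\Pi_{<0}\sigma_2(r)|^2=O(\eps)$ (the estimate \eqref{L2}), hence $\|\psi_{2,\eps}-\phi_{2,\eps}\|=O(\sqrt\eps)$. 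You instead anchor at the \emph{outer} boundary $r=1$, importing Proposition \ref{bord} through the compatibility relation \eqref{recol0} to get $\|\Pi_{<0}\sigma_2(1)\|^2=O(\eps^2)$, and propagate inward with the integrating factor $r^\gamma$; since the inhomogeneous term is controlled by $\|D_2\phi_{2,\eps}\|^2=O(\eps^2)$, you obtain the sharper $\|\Pi_{<0}\sigma_2(r)\|^2=O(\eps^2)$ uniformly, hence $O(\eps)$ rates in both \eqref{D2} and the $L_2$ difference. Both derivations are valid (your uniformity check $r^{-2\gamma}\int_r^1\rho^{2\gamma}d\rho\le 1-r$ does require $|\gamma|\ge 1$, which \eqref{inf} supplies since $n\ge 2$); the paper's is self-contained within the $M_2$ side and avoids invoking Proposition \ref{bord} here, while yours buys a better convergence rate at the cost of routing the estimate through the $M_1$ boundary control.
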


As a consequence of this result, there exists a subsequence of $\phi_{2,\eps},$ 
which converge in $L_2$ to an harmonic form satisfying the boundary conditions of 
$\mcD_2.$
\begin{proof}We write in the following $\sigma_{2,\eps}=\sigma_{2}.$ 
It is clear that $\psi_{2,\eps}$ belongs to the domain of $\mcD_2$,
and is a bounded family for the operator norm. Thus, by ellipticity it is
also a bounded family in $H^1(M_2).$ Now
$$\|\psi_{2,\eps}-\phi_{2,\eps}\|^2\leq \int_{3/4}^1|\Pi_{<0}\sigma_2(r)|^2 dr
$$
but as a consequence of \eqref{fq0}
\begin{equation}\label{L2}|\Pi_{<0}\sigma_2(r)|^2=
2\int_{1/2}^r\la\Pi_{<0}\sigma'_2(t),\Pi_{<0}\sigma_2(t)\ra dt
+|\Pi_{<0}\sigma_2(1/2)|^2\leq 2\eps\Lambda +\eps^2\frac{2}{n}\Lambda
\end{equation}
using the inequality of Cauchy-Schwarz, the fact that the $L_2$-norm of 
$\phi_\eps$ is 1 and that $(-A\circ\Pi_{<0})\geq \frac{n}{2} .$
For the second estimate:
$$D_2(\phi_{2,\eps}-\psi_{2,\eps})=D_2U^\ast\Big(\Pi_{<0}(\chi\sigma_{2,\eps})\Big)
=\chi'U^\ast\Pi_{<0}(\sigma_{2,\eps})+\chi D_2U^\ast\Pi_{<0}(\sigma_{2,\eps})
$$
and the norm of the first term is controled by $\int_{3/4}^1|\Pi_{<0}\sigma_2(r)|^2 dr$  
which is $O(\eps)$ by the estimate(\ref{L2}) and the norm of the second term by
$\|D_2(\phi_2)\|$  which is $O(\eps)$ because $q_\eps(\phi_\eps)$ is
uniformly bounded (remark that $D_2$ preserves the orthogonal decomposition following 
$\Pi_{<0}$ and $\Pi_{>0}$ on the cone).
\end{proof}

\begin{cor}The family $\Pi_{>0}\sigma_2(1)$ is bounded in $H^{1/2}(\Sn)$
as the boundary value of $\psi_{2,\eps}.$
\end{cor}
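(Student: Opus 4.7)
The plan is to identify the boundary trace of $\psi_{2,\eps}$ with $\Pi_{>0}\sigma_2(1)$ and then to invoke the standard Sobolev trace theorem. First I would observe that on the cone $r \in [7/8, 1]$, the cut-off $\chi$ is identically $1$, so the trace of $U^\ast(\Pi_{<0}(\chi \sigma_{2,\eps}))$ at $r=1$ is exactly $U^\ast(\Pi_{<0}\sigma_2(1))$. Moreover, by the spectral bound (\ref{inf}), every eigenvalue $\gamma$ of $A$ satisfies $|\gamma| \geq n/2$; in particular $0 \notin \spec(A)$, so $\Pi_{<0} + \Pi_{>0} = \id$ on the fiber at $r=1$. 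Hence the boundary value of $\psi_{2,\eps} = \phi_{2,\eps} - U^\ast(\Pi_{<0}(\chi\sigma_{2,\eps}))$ corresponds under $U$ to $\sigma_2(1) - \Pi_{<0}\sigma_2(1) = \Pi_{>0}\sigma_2(1)$.

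Next, Proposition \ref{phi2} asserts that $\psi_{2,\eps}$ is a uniformly bounded family in $H^1(M_2(1))$. By the standard trace theorem, the boundary map $H^1(M_2(1)) \to H^{1/2}(\partial M_2(1)) = H^{1/2}(\Sn)$ is continuous, so the trace of $\psi_{2,\eps}$ on $\Sn$ is bounded in $H^{1/2}(\Sn)$. Finally, since the radial weights $r^{-(n/2-p+1)}$ and $r^{-(n/2-p)}$ used in the definition of $U$ all equal $1$ at $r = 1$, the map $U$ restricts to a bundle isomorphism on the boundary, and the $H^{1/2}$-boundedness transfers to $\Pi_{>0}\sigma_2(1)$, giving the claim.

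There is no real obstacle here: the corollary is an immediate consequence of Proposition \ref{phi2} together with the trace theorem, once one makes the small algebraic observation that the boundary contribution of the subtracted term $U^\ast(\Pi_{<0}(\chi\sigma_{2,\eps}))$ exactly cancels the $\Pi_{<0}$-component of $\sigma_2(1)$. The only point requiring a slight check is the identification of the boundary trace of $\psi_{2,\eps}$ in the $(\beta,\alpha)$-decomposition, which is handled by $\chi(1) = 1$ and the spectral gap $0 \notin \spec(A)$ established in Subsection 4.2.
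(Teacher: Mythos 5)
Your proposal is correct and follows exactly the route the paper intends: the corollary is stated without proof precisely because, as you observe, the trace of $\psi_{2,\eps}$ at $r=1$ is $U^\ast\bigl(\Pi_{>0}\sigma_2(1)\bigr)$ (since $\chi(1)=1$ and $\Pi_{<0}+\Pi_{>0}=\id$), and the $H^1$-boundedness from Proposition \ref{phi2} combined with the trace theorem gives the $H^{1/2}(\Sn)$ bound. Your filling-in of the details is accurate and adds nothing that departs from the paper's implicit argument.
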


We now define a better prolongation of $\Pi_{>0}\sigma_2(1)$ on $M_1(\eps).$
More generally  let
\begin{align}
P_\eps:\Pi_{>0}\Big(H^{1/2}(\Sn)\Big)&\to H^{1}(\mcC_{\eps,1})\\
\sigma=\sum_{\gamma\in \text{Spec}(A),\gamma>0}\sigma_\gamma&\mapsto
P_\eps(\sigma)=
\sum_{\gamma\in \text{Spec}(A),\gamma>0}\eps^{\gamma-1/2}r^{-\gamma}\sigma_\gamma.
\end{align}
We remark that there exists a constant $C$ such that
\begin{equation}\label{contL2}
\|P_\eps(\sigma)\|^2_{L_2(M_1(\eps))}\leq C \sum|\sigma_\gamma|^2=C 
\|\Pi_{>0}\sigma_2(1)\|^2_{L_2(\Sn)}
\end{equation}
and also that, if $\psi_2\in\text{Dom}\mcD_2$ and with the same cut-off function 
${\xi_1},$ which has value 1 for $0\leq r\leq 1/2$ and 0 for $r\geq 1,$ then
$\Big({\xi_1} P_\eps({\psi_2}_{|\Sn}),\psi_2\Big)$  defines through the isometries $U$
an element of $H^1(M_\eps).$
Let 
$$\tilde\psi_1:={\xi_1} P_\eps({\psi_2}_{|\Sn}).
$$
We now decompose $\phi_{1,\eps}$ as follows.
Let $${\xi_1}\phi_{1,\eps}={\xi_1}(\phi^+_\eps+\phi^-_\eps)$$ according to the decomposition of 
$\sigma_1$ along the positive or negative spectrum of $A$ on the cone. Then $\tilde\psi_1$
and $\phi^+_\eps$ have the same values on the boundary so the difference
${\xi_1}\phi^+_\eps-\tilde\psi_1$ can be viewed in $H^1(M_1)$ by a prolongation by 0 on the
ball, while the boundary value of $\phi^-_\eps$ is small. We introduce for this term the
cut-off function taken in \cite{ACP}
\begin{equation*}
    {\xi}_\eps(r)=
    \begin{cases}
          1   & \text{if $r \ge 2\sqrt{\eps}$,}\\
          \dfrac{\log(2\eps)-\log r}{\log(\sqrt{\eps})} 
              & \text{if $r \in [2\eps,2\sqrt{\eps}]$,}\\
          0   & \text{if $r \le 2\eps$.}
    \end{cases}
 \end{equation*}
\begin{lem}\label{phi-}$\lim_{\eps\to 0}\|(1-{\xi}_\eps){\xi_1}\phi^-_\eps\|_{L_2}=0.$
\end{lem}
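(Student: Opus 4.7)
The plan is to exploit that $(1-\xi_\eps)\xi_1 \phi^-_\eps$ is supported in the shell $\eps \leq r \leq 2\sqrt\eps$ (since $(1-\xi_\eps)$ vanishes for $r \geq 2\sqrt\eps$, while $\phi^-_\eps$ lives only on $r \geq \eps$), and then to produce a pointwise estimate of the form $\xi_1(r)^2|\sigma_1^\gamma(r)|^2 = O(r)$ for each $\gamma<0$ with a constant that is summable over the spectrum of $A$. Combined with the fact that $\int_\eps^{2\sqrt\eps} r\, dr = O(\eps)$, this will give an $O(\eps)$ bound for the square of the $L_2$-norm.

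For the pointwise bound I would use the same trick as in the proof of Proposition~\ref{bord}: the identity $(\partial_r + \gamma/r)(\xi_1 \sigma_1^\gamma) = r^{-\gamma}\partial_r(r^\gamma \xi_1 \sigma_1^\gamma)$, combined with $\xi_1(1)=0$, gives, for each $r\in[\eps,1]$,
\begin{equation*}
r^\gamma \xi_1(r)\sigma_1^\gamma(r) \;=\; -\int_r^1 t^\gamma \bigl(\partial_t + \gamma/t\bigr)(\xi_1 \sigma_1^\gamma)(t)\, dt.
\end{equation*}
Cauchy--Schwarz, together with the computation $\int_r^1 t^{2\gamma}dt \leq r^{2\gamma+1}/|2\gamma+1|$ (valid since $2\gamma+1 \leq -(n-1) < 0$ by \eqref{inf}), then yields
\begin{equation*}
\xi_1(r)^2 |\sigma_1^\gamma(r)|^2 \;\leq\; \frac{r}{|2\gamma+1|}\, \Lambda^\gamma, \qquad \Lambda^\gamma := \int_\eps^1 \bigl|(\partial_t + \gamma/t)(\xi_1 \sigma_1^\gamma)\bigr|^2 dt,
\end{equation*}
where the uniform bound $\sum_\gamma \Lambda^\gamma \leq \Lambda$ is precisely the a priori estimate already noted in the subsection on boundary control.

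Integrating on $[\eps, 2\sqrt\eps]$, summing over $\gamma<0$, and using $|2\gamma+1|\geq n-1$ then gives
\begin{equation*}
\|(1-\xi_\eps)\xi_1 \phi^-_\eps\|_{L_2}^2 \;\leq\; \sum_{\gamma<0} \frac{\Lambda^\gamma}{|2\gamma+1|} \int_\eps^{2\sqrt\eps} r\, dr \;\leq\; \frac{2\Lambda}{n-1}\,\eps \;\xrightarrow[\eps\to 0]{}\; 0.
\end{equation*}
The only delicate point is recognising that the desired $O(\sqrt r)$ pointwise decay of $\sigma_1^\gamma$ is obtained by integrating from the \emph{outer} endpoint $r=1$, where $\xi_1$ kills the boundary term, rather than from the inner endpoint $r=\eps$, where Proposition~\ref{bord} alone would only control the boundary value and not the whole profile on the shell. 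The negativity of $\gamma$ is what makes the outer integration converge with the right sign and gives the favourable factor $r^{2\gamma+1}/|2\gamma+1|$.
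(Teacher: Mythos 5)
Your proof is correct and follows essentially the route the paper intends: the paper disposes of this lemma in one line by invoking the estimate behind Proposition~\ref{bord}, and your argument is precisely the pointwise-in-$r$ version of that boundary-control computation (giving $\xi_1(r)^2|\sigma_1^\gamma(r)|^2\leq r\,\Lambda^\gamma/|2\gamma+1|$ with $\sum_\gamma\Lambda^\gamma\leq\Lambda$), integrated over the shell $[\eps,2\sqrt\eps]$ where $1-\xi_\eps$ is supported. The details you supply, including the choice to integrate from the outer endpoint where $\xi_1$ vanishes, are sound.
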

This is a consequence of the estimate of the Proposition \ref{bord}. 
\begin{pro}\label{psi}The forms $\psi_{1,\eps}=(1-\xi_1)\phi_{1,\eps}+({\xi_1}\phi^+_\eps-\tilde\psi_1)+
\xi_\eps{\xi_1}\phi^-_\eps$ belong to $H^1(M_1)$ and define a bounded family. 
% and $\lim_{\eps\to 0}\|\psi_{1,\eps}-\phi_1(\eps)\|=0.$
\end{pro}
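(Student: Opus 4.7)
The proof has two tasks: (a) show that each of the three summands extends by zero across $\partial B(p_0,\eps)$ to an element of $H^1(M_1)$, and (b) establish a uniform $H^1(M_1)$ bound in $\eps$.

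For (a), I would examine the supports. The first summand $(1-\xi_1)\phi_{1,\eps}$ is supported where $\xi_1<1$, i.e.\ in $\{r\ge 1/2\}$, hence extends trivially. The third summand $\xi_\eps\xi_1\phi^-_\eps$ vanishes on $\{r\le 2\eps\}$ since $\xi_\eps(r)=0$ there, so its extension by zero is clearly in $H^1$. The essential check concerns the second summand $\xi_1\phi^+_\eps-\tilde\psi_1$: using the compatibility condition $\sigma_2(1)=\eps^{1/2}\sigma_1(\eps)$ from \eqref{recol0} together with the defining formula for $P_\eps$, a one-line computation gives $P_\eps(\sigma_2^+(1))(\eps)=\eps^{-1/2}\sigma_2^+(1)=\sigma_1^+(\eps)$, so the trace of the difference at $r=\eps$ vanishes and the extension by zero belongs to $H^1$.

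For (b), the first summand is bounded by the Proposition preceding Section 4. For the other two, the tool is the quadratic form identity
$$ \int_\eps^1 \bigl|(\partial_r+\tfrac{A}{r})f\bigr|^2\,dr
   = \int_\eps^1 \Bigl[|f'|^2+\tfrac{1}{r^2}\langle f,(A+A^2)f\rangle\Bigr] dr
    - \tfrac{1}{\eps}\langle f(\eps),Af(\eps)\rangle + \langle f(1),Af(1)\rangle $$
applied with $f$ equal to the appropriate $U$-image of each summand. For the second summand, set $f=\xi_1(\sigma^+_\eps-P_\eps(\sigma_2^+(1)))$: since $P_\eps$ lies in the pointwise kernel of $\partial_r+A/r$ on each eigenspace of $A$, one has
$(\partial_r+A/r)f=\xi_1'(\sigma^+_\eps-P_\eps(\sigma_2^+(1)))+\xi_1(\partial_r+A/r)\sigma^+_\eps;$
the first piece is bounded because $\xi_1'$ is supported on $[1/2,1]$ (where interior elliptic regularity gives bounds on $\phi_{1,\eps}$, and where $|P_\eps(\sigma_\gamma)|\le(2\eps)^\gamma\eps^{-1/2}|\sigma_\gamma|$ is in fact $O(\eps)$), and the second piece is bounded by $q(\phi_\eps)^{1/2}$. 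Both boundary terms in the identity vanish (at $r=\eps$ by (a), at $r=1$ because $\xi_1(1)=0$), so we extract simultaneously control of $\|f'\|_{L^2}$ and of $\int r^{-2}\langle f,A(A+1)f\rangle dr$. Since on the range of $\Pi_{>0}$ the operator $A(A+1)$ is bounded below by a positive constant depending on $n$, and $A$ contains the Gauss-Bonnet operator $D_0$ on $\Sn$, this yields the radial and tangential $H^1$ control required.

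For the third summand $h=\xi_\eps\xi_1\sigma^-_\eps$, the same identity applies and the boundary term at $r=\eps$ again vanishes since $\xi_\eps(\eps)=0$. One computes $(\partial_r+A/r)h=\xi_\eps'\xi_1\sigma^-+\xi_\eps\xi_1'\sigma^-+\xi_\eps\xi_1(\partial_r+A/r)\sigma^-$. The last two terms are handled as before (the middle one lives on $[1/2,1]$ where $\sigma$ is uniformly controlled, the last is dominated by $q(\phi_\eps)^{1/2}$). The main obstacle is the first term: $\xi_\eps'$ has size $\sim(r|\log\eps|)^{-1}$ and is supported on $[2\eps,2\sqrt\eps]$ where $1/r$ is large. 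I would control it via
$$ \|\xi_\eps'\xi_1\sigma^-\|_{L^2}^2 \;\le\; \frac{C}{|\log\eps|^2}\int_\eps^1 \frac{|\sigma^-|^2}{r^2}\,dr, $$
and show the integral on the right is uniformly bounded as follows: writing the quadratic form identity for $\Pi_{<0}(\xi_1\sigma_\eps)$ and using that the boundary term $-\eps^{-1}\langle\sigma^-(\eps),A\sigma^-(\eps)\rangle$ is \emph{non-negative} (since $-A\circ\Pi_{<0}\ge n/2$), one deduces $\int r^{-2}\langle\cdot,A(A+1)\cdot\rangle dr\le\Lambda$; the lower bound on $A(A+1)$ on $\Pi_{<0}$ (again using $|\gamma|\ge n/2$) then yields $\int r^{-2}|\sigma^-|^2dr\le C$. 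With the quadratic form identity and the vanishing of the boundary term, this in turn gives the desired $H^1$ bound on $h$, completing the proof.
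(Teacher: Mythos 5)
Your overall strategy is the paper's: treat the three summands separately, observe that the traces match at $r=\eps$ (via $P_\eps(\sigma_2^+(1))(\eps)=\eps^{-1/2}\sigma_2^+(1)=\sigma_1^+(\eps)$, which is exactly the compatibility condition \eqref{recol0}) so that extension by zero lands in $H^1$, use that $P_\eps$ lies in $\Ker(\partial_r+\tfrac{A}{r})$ to reduce the second summand to the bound \eqref{contL2} plus the quadratic-form bound, and handle the third summand by absorbing the commutator term $|d\xi_\eps|\,\xi_1\phi^-_\eps$. The conversion of graph-norm bounds into $H^1$ bounds via the identity with the $\tfrac{1}{r^2}\iprod{\cdot}{A(A+1)\cdot}$ term is what the paper compresses into ``the $q$-norm is equivalent to the $H^1$-norm.'' Up to these points of presentation your argument coincides with the paper's.

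There is, however, one step that fails as written: for the third summand you claim the uniform bound $\int_\eps^1 r^{-2}|\sigma^-|^2\,dr\le C$, deduced from $\int_\eps^1 r^{-2}\iprod{\sigma^-}{A(A+1)\sigma^-}\,dr\le\Lambda$ together with ``the lower bound on $A(A+1)$ on $\Im\Pi_{<0}$.'' On $\Im\Pi_{<0}$ the eigenvalues of $A(A+1)$ are $|\gamma|(|\gamma|-1)$ with $|\gamma|\ge\tfrac n2$, so the lower bound is $\tfrac n2(\tfrac n2-1)$, which is $0$ when $n=2$ (the paper notes explicitly that $A(A+1)$ is only non-negative, positive for $n\ge3$; and $\gamma=-1$ is genuinely attained on $\Sphere^2$, e.g.\ for the lowest coexact eigenvalue one gets $\gamma=\tfrac12-\tfrac32=-1$). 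Since the paper allows $m=n+1\ge3$, i.e.\ $n\ge2$, your intermediate claim is unjustified precisely in the lowest dimension; and it is not merely a question of a missing argument, since on the $\gamma=-1$ eigenspace one can only extract $\int_\eps^1 r^{-2}|\sigma^{\gamma}|^2\,dr=O(|\log\eps|)$ from the available information.

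The repair is the paper's estimate \eqref{val-}: applying the quadratic-form identity on $[r,1]$ and using $-A\circ\Pi_{<0}\ge\tfrac n2$ gives the \emph{pointwise} bound $\tfrac1r\iprod{\sigma^-(r)}{|A|\sigma^-(r)}\le\Lambda$, hence $|\sigma^-(r)|^2\le\tfrac{2\Lambda}{n}r$. Then
$$\||d\xi_\eps|\,\xi_1\phi^-_\eps\|^2\le\frac{8\Lambda}{n\log^2\eps}\int_{2\eps}^{2\sqrt\eps}\frac{dr}{r}=O\Bigl(\frac{1}{|\log\eps|}\Bigr),$$
so the $|\log\eps|$ loss in the integral is beaten by the $|\log\eps|^{-2}$ coming from $|d\xi_\eps|^2$, and the conclusion stands in all dimensions $n\ge2$. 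With this substitution your proof is complete and agrees with the paper's.
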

\begin{proof}We will show that each term is bounded. For the first one
it is already done in Proposition \ref{McG}. For the second one, we remark that 
\begin{equation}\label{feps}
(\partial_r+\frac{A}{r})(\phi^+_\eps-\tilde\psi_1)=
(\partial_r+\frac{A}{r})(\phi^+_\eps)+\partial_r({\xi_1})P_\eps({\psi_2}_{|\Sn})
:=f_\eps
\end{equation}
and $f_\eps$ is uniformly bounded in $L_2(M_1)$ because of (\ref{contL2}).
This estimate (\ref{contL2}) shows also that the $L_2$-norm of $(\phi^+_\eps-\tilde\psi_1)$
is bounded. Thus the family $({\xi_1}\phi^+_\eps-\tilde\psi_1)$ is bounded for the
$q$-norm in $H^1(M_1)$ which is equivalent to the $H^1$-norm.\\
For the third one we use the estimate due to the expression of the quadratic
form. Expriming that $\int_{\mcC_{r,1}}|D_1({\xi_1}\phi^-)|^2$ is bounded by $\Lambda$
gives that
\begin{equation}\label{val-}
\frac{1}{r} \bigiprod {\sigma^-_1(r)}{|A|\sigma^-_1(r)}\leq\Lambda
\end{equation}
by the same argument as used for the Proposition \ref{bord}. Now 
\begin{equation*}
\|D_1(\xi_\eps{\xi_1}\phi^-_\eps)\|\leq\|\xi_\eps D_1({\xi_1}\phi^-_\eps)\|+
\||d\xi_\eps| {\xi_1}\phi^-_\eps\|\leq\|D_1({\xi_1}\phi^-_\eps)\|+
\||d\xi_\eps| {\xi_1}\phi^-_\eps\|
\end{equation*}
the first term is bounded and, with $|A|\geq\frac{n}{2}$ and the estimate (\ref{val-}), we have
\begin{align*}
\||d\xi_\eps| {\xi_1}\phi^-_\eps\|^2 
&\leq\frac{8\Lambda}{n\log^2\eps}\int_\eps^{\sqrt\eps}\frac{dr}{r}\\ 
&\leq \frac{4\Lambda}{n|\log\eps|}.
\end{align*}
This complete the proof.\end{proof}
In fact the decomposition used here is almost orthogonal:
\begin{lem}\label{lem1}
$$<(\phi^+_\eps-\tilde\psi_1),\tilde\psi_1>==O(\sqrt\eps).
$$
\end{lem}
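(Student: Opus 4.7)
The plan is to unfold the inner product through the isometry $U$ and exploit that $P_\eps$ lies in the kernel of the first-order operator $\partial_r+A/r$. Set $h:=\sigma^+_{1,\eps}-P_\eps$ on the cone; the compatibility condition \eqref{recol0} together with the definition of $P_\eps$ yield $h(\eps)=0$. Regrouping gives
\[
\la\phi^+_\eps-\tilde\psi_1,\tilde\psi_1\ra
= \int_\eps^1 \xi_1\,\la h,P_\eps\ra\,dr + \int_\eps^1 \xi_1(1-\xi_1)|P_\eps|^2\,dr.
\]
The second integral is supported on $[1/2,1]$, where modewise $|P_\eps^\gamma(r)|\le(2\eps)^{\gamma-1/2}|\sigma_{2,\gamma}(1)|$; since $\gamma\ge n/2$, the summation produces an $O(\eps^{n-1})$ contribution.

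For the first integral, project onto the eigenspaces of $A$. Because $(\partial_r+\gamma/r)P_\eps^\gamma=0$, the quantity $g^\gamma:=(\partial_r+\gamma/r)h^\gamma$ coincides with $(\partial_r+\gamma/r)\sigma^{+,\gamma}_{1,\eps}$. Integrating $\partial_s(s^\gamma h^\gamma)=s^\gamma g^\gamma$ from $\eps$ to $r$ and using $h^\gamma(\eps)=0$, Cauchy--Schwarz yields
\[
|h^\gamma(r)|\le\frac{r^{1/2}}{\sqrt{2\gamma+1}}\,\|g^\gamma\|_{L^2(\eps,1)}.
\]
Substituting into $|\int_\eps^1\xi_1 h^\gamma P_\eps^\gamma\,dr|$ and evaluating $\eps^{\gamma-1/2}\int_\eps^1 r^{1/2-\gamma}\,dr$, the powers of $\eps$ balance so that the per-mode contribution is at worst $O(\sqrt\eps)$, the extremal case being $\gamma=n/2$. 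A final Cauchy--Schwarz over $\gamma$ uses the two global bounds $\sum_\gamma\|g^\gamma\|^2_{L^2(\eps,1)}\le\|D_1\phi_\eps\|^2\le q(\phi_\eps)=O(1)$ and $\sum_\gamma|\sigma_{2,\gamma}(1)|^2=\|\Pi_{>0}\sigma_2(1)\|^2_{L^2(\Sn)}=O(1)$ (the latter from the $H^{1/2}$-boundedness following \pref{phi2}), producing the desired $O(\sqrt\eps)$ estimate.

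The main obstacle is producing a workable pointwise bound on $h^\gamma$: a naive estimate via $\|h^{\gamma'}\|_{L^2}$ is fatal because $\|P_\eps^{\gamma'}\|_{L^2}$ blows up like $\eps^{-1}$, so $\|h^{\gamma'}\|_{L^2}$ is unbounded as $\eps\to 0$. The trick of replacing $\partial_r$ by $\partial_r+\gamma/r$ is essential: $P_\eps^\gamma$ lies precisely in $\ker(\partial_r+\gamma/r)$, so $g^\gamma$ inherits its $L^2$ bound directly from the bounded quadratic form $q(\phi_\eps)$.
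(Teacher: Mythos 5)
Your proof is correct and follows essentially the same route as the paper's: both arguments rest on the facts that $P_\eps^\gamma\in\Ker(\partial_r+\gamma/r)$ and that $(\sigma_1^+-P_\eps)(\eps)=0$ by \eqref{recol0}, then integrate $\partial_s(s^\gamma\,\cdot\,)$ from $\eps$, apply Cauchy--Schwarz mode by mode so that the $\eps^{\gamma-1/2}$ in $P_\eps$ balances the growth of $\int_\eps^1 r^{\,1/2-\gamma}dr$, and finish with Cauchy--Schwarz over $\gamma$ using the boundedness of $q(\phi_\eps)$ and of $\Pi_{>0}\sigma_2(1)$ in $L_2(\Sn)$. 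The only difference is organizational: you split off the cut-off contribution $(1-\xi_1)P_\eps$ as a separate $O(\eps^{n-1})$ term and use a pointwise bound on $h^\gamma$, whereas the paper absorbs $\xi_1'P_\eps$ into the source term $f_\eps$ of \eqref{feps} and estimates the resulting double integral directly.
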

\subsubsection*{proof of lemma \ref{lem1}}\tir 
If we decompose the terms under the eigenspaces of $A$ we see that only
the positive eigenvalues are involved and, with $f_\eps=\sum_{\gamma>0}f^\gamma$
and $(\phi^+_\eps-\tilde\psi_1)=\sum_{\gamma>0}\phi_0^\gamma,$ the equation 
(\ref{feps}) and the fact that $(\phi^+_\eps-\tilde\psi_1)(\eps)=0$ give
$$\phi_0^\gamma(r)=r^{-\gamma}\int_\eps^r \rho^\gamma f^\gamma(\rho) d\rho.
$$
Then for each positive eigenvalue $\gamma$ of $A$
\begin{align*}
<(\phi_0^\gamma,\tilde\psi_1^\gamma> &=
\eps^{\gamma-1/2}\int_\eps^1 r^{-2\gamma}\int_
\eps^r \rho^\gamma <\sigma_\gamma,f^\gamma(\rho)>_{L_2(\Sn)} d\rho\\
&=\eps^{\gamma-1/2}\int_\eps^1\frac{r^{-2\gamma+1}}{2\gamma-1}r^\gamma
<\sigma_\gamma,f^\gamma(r)>_{L_2(\Sn)}dr+\\
&\hspace{5cm}\frac{\eps^{\gamma-1/2}}{2\gamma-1}\int_
\eps^1 \rho^\gamma <\sigma_\gamma,f^\gamma(\rho)>_{L_2(\Sn)} d\rho\\
&\leq\eps^{\gamma-1/2}\int_\eps^1\frac{r^{-\gamma+1}}{2\gamma-1}
<\sigma_\gamma,f^\gamma(r)>_{L_2(\Sn)}dr+\\
&\hspace{5cm} \frac{\eps^{\gamma-1/2}}{(2\gamma-1)\sqrt{2\gamma+1}}
\|\sigma_\gamma\|\|f^\gamma\|_{L_2(\mcC_{\eps,1})}\\
&\leq C\eps^{\gamma-1/2}\|\sigma_\gamma\|\frac{\eps^{(-2\gamma+3)/2}}{(2\gamma-1)(2\gamma-1)
\sqrt{2\gamma-3}}\|f^\gamma\|_{L_2(\mcC_{\eps,1})}+\\
&\hspace{5cm}\frac{\eps^{\gamma-1/2}}{(2\gamma-1)\sqrt{2\gamma+1}}
\|\sigma_\gamma\|\|f^\gamma\|_{L_2(\mcC_{\eps,1})}\\
&\leq C\sqrt{\eps}\|\sigma_\gamma\|\|f^\gamma\|_{L_2(\mcC_{\eps,1})}.
\end{align*}

This estimate gives the lemma.\\
Remark: For $\gamma>1$, and so for $n>2$, this estimate is better.
%%%%%%%%%%%%
\section{proof of theorem \ref{B}}
%%%%%%%%%%%%
\begin{lem}If $\lambda\neq 0,$ then $\lambda_\eps\neq 0$ for all $\eps$ and
$$\lim_{\eps\to 0}(L_2)\tilde\psi_{1,\eps}=0$$
and also
$$\lim_{\eps\to 0}(L_2)\psi_{2,\eps}=0
$$
as well as in $q$-norm. 
\end{lem}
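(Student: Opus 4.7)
The plan is to prove the three parts in sequence, the bulk of the work concentrated in showing $\psi_{2,\eps}\to 0$.

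For the first part, $\lambda_\eps\neq 0$ follows directly from Proposition~\ref{McG}: every positive eigenvalue of $\Delta_\eps$ is uniformly bounded below by $\lambda_0>0$, so the hypothesis $\lambda_\eps\to\lambda\neq 0$ forces $\lambda_\eps>0$ (in fact $\lambda_\eps\geq\lambda_0$) for $\eps$ small.

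For the second part, start from the eigenvalue equation: $q_\eps(\phi_\eps)=\lambda_\eps\|\phi_\eps\|^2=\lambda_\eps$ and~\eqref{quadform} give $\|D_2\phi_{2,\eps}\|_{L_2}^2\leq\eps^2\lambda_\eps$, so combined with~\eqref{D2} one obtains $\|D_2\psi_{2,\eps}\|_{L_2}=O(\sqrt\eps)\to 0$. Since Proposition~\ref{phi2} provides $(\psi_{2,\eps})$ bounded in $H^1(M_2(1))$, Rellich extracts a subsequence converging weakly in $H^1$ and strongly in $L_2$ to some $\psi_\infty$, and closedness of $\mcD_2$ (Proposition~\ref{prop:D_2-elliptic}) places $\psi_\infty\in\ker\mcD_2$. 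To rule out $\psi_\infty\neq 0$, use that $\phi_\eps\perp_{L_2}\ker\Delta_\eps$ since $\lambda_\eps>0$. For each $\omega\in\ker\mcD_2$ (finite-dimensional by the Corollary to Proposition~\ref{prop:D_2-elliptic}), construct a test form $h^\omega_\eps$ on $M_\eps$ whose $M_2$-part equals $\omega$ and whose $M_1$-part equals $\xi_1 U^\ast P_\eps(\omega|_{\Sn})$; the interface compatibility~\eqref{recol0} is automatic by the definition of $P_\eps$. Since $\omega\in\ker D_2$ and $P_\eps(\omega|_{\Sn})$ lies in $\ker D_1$ on the cone, $\Delta_\eps h^\omega_\eps$ is supported on $\{\xi_1'\neq 0\}$, and the spectral bound~\eqref{inf} shows $\|\Delta_\eps h^\omega_\eps\|_{L_2}\to 0$. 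The uniform spectral gap of Proposition~\ref{McG} then gives $\|h^\omega_\eps - P_{\ker\Delta_\eps}h^\omega_\eps\|_{L_2}\leq\lambda_0^{-1}\|\Delta_\eps h^\omega_\eps\|_{L_2}\to 0$, so orthogonality yields $\langle\phi_\eps,h^\omega_\eps\rangle\to 0$. One then decomposes this pairing into an $M_2$-piece $\langle\phi_{2,\eps},\omega\rangle_{L_2(M_2(1))}$ and an $M_1$-piece that vanishes, using the rapid decay $P_\eps(\omega|_{\Sn})\sim\eps^{\gamma-1/2}r^{-\gamma}$ on the cone together with the $H^1(M_1)$-boundedness of $\psi_{1,\eps}$ from Proposition~\ref{psi}. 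Passing to the limit gives $\langle\psi_\infty,\omega\rangle=0$ for every $\omega\in\ker\mcD_2$, so $\psi_\infty=0$; subsequence-independence upgrades this to $\psi_{2,\eps}\to 0$ in $L_2$, and the $q$-norm convergence $\|D_2\psi_{2,\eps}\|\to 0$ was already established.

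For the third part, $\tilde\psi_{1,\eps}=\xi_1 U^\ast P_\eps(\psi_{2,\eps}|_{\Sn})$: weak convergence $\psi_{2,\eps}\rightharpoonup 0$ in $H^1$, combined with trace continuity and Rellich on $\Sn$, yields $\psi_{2,\eps}|_{\Sn}\to 0$ strongly in $L_2(\Sn)$, so~\eqref{contL2} gives $\|\tilde\psi_{1,\eps}\|_{L_2}\to 0$. Since $P_\eps$ maps into $\ker D_1$ on the cone, $D_1\tilde\psi_{1,\eps}$ is supported where $\xi_1'\neq 0$; using~\eqref{inf} a direct estimate yields $\|D_1\tilde\psi_{1,\eps}\|_{L_2}^2\leq C(2\eps)^{n-1}\|\psi_{2,\eps}|_{\Sn}\|_{L_2(\Sn)}^2\to 0$, giving $q$-norm convergence. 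The main obstacle is the orthogonality argument of the second paragraph: the construction of the quasi-harmonic test forms $h^\omega_\eps$ and the decoupling of the $M_1$ and $M_2$ contributions in $\langle\phi_\eps,h^\omega_\eps\rangle$, which relies essentially on~\eqref{inf} and the regularity of $\psi_{1,\eps}$ on $M_1$.
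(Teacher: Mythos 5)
Your overall strategy is the same as the paper's (transplant elements of $\Ker\mcD_2$ to $M_\eps$ via $P_\eps$, exploit the orthogonality of $\phi_\eps$ to $\Ker\Delta_\eps$ through the uniform gap of Proposition \ref{McG}, and treat the part of $\psi_{2,\eps}$ orthogonal to $\Ker\mcD_2$ by coercivity), but there is one genuine gap in the key step. You claim that in the pairing $\la\phi_\eps,h^\omega_\eps\ra$ the $M_1$-piece $\la\phi_{1,\eps},\xi_1 U^\ast P_\eps(\omega|_{\Sn})\ra_{L_2(M_1(\eps))}$ vanishes, invoking the decay of $P_\eps(\omega|_{\Sn})$ and the $H^1(M_1)$-boundedness of $\psi_{1,\eps}$. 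But $\psi_{1,\eps}$ is not $\phi_{1,\eps}$: by Proposition \ref{psi} one has $\phi_{1,\eps}=\psi_{1,\eps}+\tilde\psi_{1,\eps}+(1-\xi_\eps)\xi_1\phi^-_\eps$, and the middle term $\tilde\psi_{1,\eps}=\xi_1 U^\ast P_\eps(\psi_{2,\eps}|_{\Sn})$ is concentrated near $r=\eps$ in exactly the same way as your test form. A direct computation on the cone gives
$$\la\tilde\psi_{1,\eps},\xi_1 U^\ast P_\eps(\omega|_{\Sn})\ra=\sum_{\gamma>0}\Big(\int_\eps^1\xi_1^2\,\eps^{2\gamma-1}r^{-2\gamma}dr\Big)\la(\sigma_{2,\eps}(1))_\gamma,\omega_\gamma\ra\sim\sum_{\gamma>0}\tfrac{1}{2\gamma-1}\la(\sigma_{2,\eps}(1))_\gamma,\omega_\gamma\ra,$$
which is $O(1)$, not $o(1)$: it converges to a boundary pairing of the traces of $\psi_\infty$ and $\omega$, and a nonzero element of $\Ker\mcD_2$ has nonzero trace (otherwise it would vanish on the cone $[1/2,1]$ and hence everywhere). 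At this stage of the proof you cannot assume $\tilde\psi_{1,\eps}$ is small --- that is part of what the lemma asserts --- so the step as written is circular.

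The gap is repairable, and the repair is essentially what the paper does. The surviving limit identity is $B(\psi_\infty,\omega)+\la\psi_\infty,\omega\ra_{L_2(M_2(1))}=0$ for all $\omega\in\Ker\mcD_2$, where $B$ is the positive semidefinite boundary form above; taking $\omega=\psi_\infty$ makes both terms nonnegative and forces $\psi_\infty=0$. The paper organizes this by testing directly against the form built from $\psi_{2,\eps}$ itself (split into its $\Ker\mcD_2$ and orthogonal components) and using Lemma \ref{lem1}, i.e.\ $\la\phi^+_\eps-\tilde\psi_1,\tilde\psi_1\ra=O(\sqrt\eps)$, to identify the $M_1$-piece as $\|\tilde\psi_{1,\eps}\|^2+O(\sqrt\eps)$; the sum $\|\tilde\psi_{1,\eps}\|^2+\|\psi_{2,\eps}\|^2$ then tends to $0$, which also yields the first $L_2$ limit without your separate trace argument. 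A secondary remark: your gap estimate uses the operator inequality $\|h-P_{\Ker\Delta_\eps}h\|\le\lambda_0^{-1}\|\Delta_\eps h\|$, which requires $h^\omega_\eps\in\Dom(\Delta_\eps)$, i.e.\ the first-order matching \eqref{recol1}; this does hold because $P_\eps(\omega|_{\Sn})$ and $\omega$ both solve $(\partial_r+\tfrac1rA)\sigma=0$ near the interface, but the paper sidesteps the issue by working with the quadratic form, $\operatorname{dist}(h,\Ker\Delta_\eps)^2\le\lambda_0^{-1}q(h)$, which only needs $h\in\Dom(q)$.
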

\begin{proof}We know, by the Proposition \ref{McG}, that there is a universal lower
bound for positive eigenvalues on $M(\eps)$, so if $\lambda=\lim\lambda_\eps$ is 
positive, it means that all the $\lambda_\eps$ are also positive! 
We know that $\psi_{2,\eps}$ is in the domain of $\mcD_2$, we decompose
$$\psi_{2,\eps}=\psi^0_{2,\eps}+\bar\psi_{2,\eps}
$$
along $\Ker\mcD_2$ and its orthogonal. Each part is bounded in $H^1(M_2(1))$ and can be
prolongated on the cone using $P_\eps$. 

Let $\tilde\psi^0_{1,\eps}={\xi_1} P_\eps({\psi^0_2}_{|\Sn}),$ 
$\bar\psi_{1,\eps}={\xi_1} P_\eps({\bar\psi}_{2|\Sn})$ and 
$$\psi_\eps=(\tilde\psi^0_{1,\eps}+\bar\psi_{1,\eps},\psi_{2,\eps}).
$$
Now
$$\psi^0_\eps=(\tilde\psi^0_{1,\eps},\psi^0_{2,\eps})\in \text{dom}(q).
$$
The $L_2$-norm of $\psi^0_\eps$ is bounded and 
\begin{align*}
q(\psi^0_\eps)&=\int_\eps^1|\xi_1'(r)P_\eps(\sigma^0_2)|^2\\
&\leq C\int_{1/2}^1|P_\eps({\psi^0_2}_{|\Sn})|^2\\
&\leq O(\eps^{n-1})
\end{align*}
due to the expression of $P_\eps$ the fact that spec$(|A|)\geq\frac n2$ and
the uniform boundedness of $P_\eps$.
Because $n\geq 2$ and  Proposition \ref{McG} is true, we conclude that the distance of 
$\psi^0_\eps$ to $\Ker\Delta_\eps$ is $O(\eps).$
But we know that $\lambda_\eps\neq 0$, so $\phi_\eps$ is orthogonal to
$\Ker\Delta_\eps$ and, with the previous result
$$<\phi_\eps,\psi^0_\eps>=O(\sqrt\eps).
$$

On the other hand we have that 
$$\int|\mcD_2\bar\psi_{2,\eps}|^2=O(\eps)\Rightarrow \|\bar\psi_{2,\eps}\|_{L_2(M_2(1))}=
O(\sqrt\eps)$$
and finally $\|\bar\psi_{2,\eps}\|_{H^1(M_2(1))}=O(\sqrt\eps)$ by ellipticity
so $\|\bar\psi_{1,\eps}\|_{L_2(M_1(\eps))}=O(\sqrt\eps)$ by uniform continuity of $P_\eps$.
and we have also
$$<\phi_\eps,\psi_\eps>=O(\sqrt\eps).
$$

Now we use Proposition \ref{phi2} and Lemma \ref{lem1}, the conclusion is
$$\lim_{\eps\to 0}\|\tilde\psi_{1,\eps}\|^2+\|\psi_{2,\eps}\|^2=0.
$$

\end{proof}

As a consequence of this result and \pref{phi2}, we obtain

\begin{cor}\label{0phi2}$\lim_{\eps\to 0}(L_2)\phi_{2,\eps}=0.$
\end{cor}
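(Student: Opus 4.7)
The plan is to obtain this as an immediate consequence of the preceding lemma and \pref{phi2} via a triangle-inequality argument. The key observation is that $\phi_{2,\eps}$ has already been split, in the construction preceding \pref{phi2}, into two pieces whose $L_2$ norms have each been controlled separately.

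First I would write the decomposition
\begin{equation*}
\phi_{2,\eps} = \psi_{2,\eps} + \bigl(\phi_{2,\eps} - \psi_{2,\eps}\bigr),
\end{equation*}
where $\psi_{2,\eps} = \phi_{2,\eps} - U^{\ast}\bigl(\Pi_{<0}(\chi\sigma_{2,\eps})\bigr)$ is the modification from \pref{phi2} lying in $\Dom(\mcD_2)$. By \pref{phi2} the second summand satisfies $\|\phi_{2,\eps} - \psi_{2,\eps}\|_{L_2} \to 0$ as $\eps \to 0$, this being precisely one of the two conclusions of that proposition (and traceable back to the boundary estimate in \pref{bord}, which forces $\Pi_{<0}\sigma_{2}$ to be small throughout the cone via integration from $r=1/2$).

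Next I would invoke the immediately preceding lemma, which (under the hypothesis $\lambda \neq 0$ needed for \pref{B}) asserts in particular that $\|\psi_{2,\eps}\|_{L_2} \to 0$. Combining both estimates by the triangle inequality yields
\begin{equation*}
\|\phi_{2,\eps}\|_{L_2(M_2(1))} \le \|\psi_{2,\eps}\|_{L_2} + \|\phi_{2,\eps} - \psi_{2,\eps}\|_{L_2} \longrightarrow 0,
\end{equation*}
which is exactly the statement of the corollary.

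There is really no obstacle here: the work has been done upstream, where the delicate steps were (a) proving that the trace $\Pi_{<0}\sigma_{1,\eps}(\eps)$ is of order $\sqrt{\eps}$ using the APS-type boundary term in the quadratic form \eqref{fq1}, and (b) establishing in the preceding lemma that the orthogonality of $\phi_\eps$ to $\Ker \Delta_\eps$ together with the smallness of $q(\psi_\eps^0)$ forces the $M_2$-part of $\psi_\eps$ to vanish in the limit. The present corollary just assembles these facts.
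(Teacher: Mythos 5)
Your argument is exactly the one the paper intends: the corollary is stated there as an immediate consequence of the preceding lemma (which gives $\|\psi_{2,\eps}\|_{L_2}\to 0$ under the standing hypothesis $\lambda\neq 0$) and of Proposition~\ref{phi2} (which gives $\|\phi_{2,\eps}-\psi_{2,\eps}\|_{L_2}\to 0$), combined by the triangle inequality. Your write-up just makes explicit what the paper leaves implicit, and correctly identifies where the real work was done upstream.
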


Recall now that $\psi_{1,\eps}=\phi_{1,\eps}-\tilde\psi_{1,\eps}-(1-{\xi}_\eps)\xi_1\phi^-_\eps$
and that we know, by the last Lemma and Lemma \ref{phi-}, that the two last terms 
converge to 0.
\begin{cor}
We can extract from $\psi_{1,\eps}$ a subsequence which converge in $L_2$
and weakly in $H^1,$ and any such subsequence defines at the limit a form
$\phi\in H^1(M_1)$ such that
$$\|\phi\|_{L_2}=1 \text{ and } \Delta\phi=\lambda\phi \text{ weakly.}
$$
\end{cor}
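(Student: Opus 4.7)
\medskip

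\noindent\emph{Plan of proof.} First, by \pref{psi} the family $\psi_{1,\eps}$ is bounded in $H^1(M_1)$; since $M_1$ is a compact Riemannian manifold, Rellich's theorem together with weak compactness of the unit ball in $H^1$ yields a subsequence $\psi_{1,\eps_j}$ converging weakly in $H^1(M_1)$ and strongly in $L_2(M_1)$ to some $\phi\in H^1(M_1)$.

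Next I would verify $\|\phi\|_{L_2(M_1)}=1$. Writing $\phi_{1,\eps}=\psi_{1,\eps}+\tilde\psi_{1,\eps}+(1-\xi_\eps)\xi_1\phi^-_\eps$ on $M_1(\eps)$ (extended by $0$ across $B(p_0,\eps)$), the two correction terms tend to $0$ in $L_2$: the first by the Lemma preceding \cref{0phi2} and the second by \lref{phi-}. Hence
$$\|\psi_{1,\eps}\|_{L_2(M_1)}^2 = \|\phi_{1,\eps}\|_{L_2(M_1(\eps))}^2 + o(1).$$
Combined with $\|\phi_\eps\|_{L_2(M_\eps)}^2 = \|\phi_{1,\eps}\|^2 + \|\phi_{2,\eps}\|^2 = 1$ and $\|\phi_{2,\eps}\|_{L_2}\to 0$ from \cref{0phi2}, we get $\|\psi_{1,\eps}\|_{L_2}\to 1$, hence $\|\phi\|_{L_2}=1$ by $L_2$-convergence.

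For the eigenvalue equation, I would test against a smooth form $\eta$ compactly supported in $M_1\setminus\{p_0\}$. For $\eps$ small, $\supp\eta\subset M_1(\eps)\cap\{r\geq 2\sqrt\eps\}$, so $\eta$ is a legitimate test form on $M_\eps$ (trivially extended), and $(1-\xi_\eps)\xi_1\phi^-_\eps$ vanishes on $\supp\eta$. The eigenvalue equation $\Delta_\eps\phi_\eps=\lambda_\eps\phi_\eps$ then gives, after integration by parts against the smooth $\eta$,
$$\langle \phi_{1,\eps},\Delta\eta\rangle = \lambda_\eps \langle \phi_{1,\eps},\eta\rangle.$$
Decomposing $\phi_{1,\eps}=\psi_{1,\eps}+\tilde\psi_{1,\eps}$ on $\supp\eta$, the $\tilde\psi_{1,\eps}$-contributions vanish in the limit because $\tilde\psi_{1,\eps}\to 0$ in $L_2$ and $\Delta\eta$, $\eta$ are bounded in $L_2$; the $\psi_{1,\eps}$-contributions pass to the limit by strong $L_2$-convergence $\psi_{1,\eps_j}\to\phi$. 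Using $\lambda_\eps\to\lambda$, we obtain $\langle\phi,\Delta\eta\rangle=\lambda\langle\phi,\eta\rangle$ for every such $\eta$, i.e.\ the weak equation $\Delta\phi=\lambda\phi$ on $M_1\setminus\{p_0\}$.

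The remaining point is to extend the identity from test forms supported away from $p_0$ to all of $H^1(M_1)$, which is where I expect the main subtlety to lie. Since $m=n+1\geq 3$, the single point $\{p_0\}$ has $H^1$-capacity zero in $M_1$, so smooth forms compactly supported in $M_1\setminus\{p_0\}$ are dense in $H^1(M_1)$. Rewriting the identity in the symmetric form
$$\langle d\phi,d\eta\rangle + \langle d^*\phi,d^*\eta\rangle = \lambda\langle\phi,\eta\rangle,$$
which is continuous in $\eta\in H^1(M_1)$, this capacity-zero density argument promotes the equation to hold for all $\eta\in H^1(M_1)$, completing the proof that $\Delta\phi=\lambda\phi$ weakly on $M_1$.
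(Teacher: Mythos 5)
Your proposal is correct and is essentially the argument the paper intends: the corollary is stated there without an explicit proof, as an immediate consequence of the $H^1(M_1)$-boundedness in Proposition~\ref{psi}, the vanishing of $\tilde\psi_{1,\eps}$ and $(1-\xi_\eps)\xi_1\phi^-_\eps$ from the preceding Lemma and Lemma~\ref{phi-}, the vanishing of $\phi_{2,\eps}$ from Corollary~\ref{0phi2}, and Rellich compactness. Your extra step --- passing from test forms supported away from $p_0$ to all of $H^1(M_1)$ via the zero $H^1$-capacity of a point in dimension $m\geq 3$ --- is the right way to close the only gap the paper leaves implicit.
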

%%%%%%%%%%%%%%
\section{proof of theorem \ref{C}}
%%%%%%%%%%%%%%
\subsection{multiplicity of 0}
The dimension of the kernel of $\Delta_\eps$ is given by the cohomology
of $M$ which can be calculated with the Mayer-Vietoris sequence associated
to the covering $U_1,U_2$ introduced at the beginning, see \pref{McG}. 
If we remember also that 
$H^p(M_j-B,\R)\sim H^p(M_j,\R)$ for $p<m,$ we obtain that
$H^p(M,\R)\sim H^p(M_1,\R)\oplus H^p(M_2,\R)$ for $1\leq p\leq (m-1)$
while $H^{0,m}(M,\R)\sim H^{0,m}(M_1,\R)\sim H^{0,m}(M_2,\R).$

The transplantation of the harmonic forms of $M_1$ in $M$ has been described
in \cite{AC}. With the previous calculation, we have good candidates for
transplantation of the cohomology of $M_2$: for each $\sigma_2\in\Ker\mcD_2$
with $L_2$-norm equal to 1, let
$$\tilde\psi_\eps=(\tilde\psi_1,\psi_2)=U^\ast
\Big(\xi_1 P_\eps({\sigma_2}_{|\Sn}),\sigma_2\Big).
$$
Now let $\phi_\eps\in\Ker\Delta_\eps.$ We apply to $\phi_\eps$ the preceding 
estimates: there exists a subsequence which gives at the limit 
$\psi_1\in\Ker\Delta_1$ and $\psi_2\in\Ker\mcD_2;$ and only one of these
two terms can be zero. The conclusion is that all the harmonic forms of
$M_\eps$ can be approched by forms like $\tilde\psi_\eps$ or 
$\chi_\eps \phi_1,$ with $\phi_1\in \Ker\Delta_1.$
As a consequence on has
\begin{cor}\label{coho2}For $1\leq p\leq (m-1)$ the two spaces $H^p(M_2,\R)$ and
$\Ker\mcD_2$ are isomorphic.
\end{cor}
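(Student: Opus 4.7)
The plan is to combine the preceding convergence analysis with a dimension count based on Mayer-Vietoris. On one side, the corollary following \pref{prop:D_2-elliptic} already states that $\Ker\mcD_2$ injects into $\bigoplus_p H^p(M_2(1);\R)$, and since $M_2(1)=M_2-B(p_0,1)$, removing an open ball of codimension zero does not affect cohomology in degree $1\leq p\leq m-1$, so we obtain $\dim\Ker\mcD_2\cap\Lambda^p\leq \dim H^p(M_2;\R)$ in that range. All the work is to show the reverse inequality via approximation.

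First I would fix a degree $p$ with $1\leq p\leq m-1$ and pick any orthonormal basis $(\phi_\eps^{(j)})_{j=1,\dots,N_\eps}$ of $\Ker\Delta_\eps$ restricted to $p$-forms. By the approximation procedure developed above (the construction of $\psi_{1,\eps}$ and $\psi_{2,\eps}$ starting from an eigenform), applied to each $\phi_\eps^{(j)}$ and after extracting a diagonal subsequence, each $\phi_\eps^{(j)}$ decomposes, up to an $L_2$-error going to $0$, as the sum of a transplanted harmonic form coming from an element of $\Ker\Delta_1\cap\Lambda^p$ and a form of the type $\tilde\psi_\eps$ built from an element of $\Ker\mcD_2\cap\Lambda^p$ (the limit $\lambda=0$ case of the convergence proof, together with the transplantation construction used for the cohomology of $M_2$).

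Next I would check that these two families of approximations are asymptotically orthogonal and asymptotically orthonormal: \lref{lem1} and the $O(\sqrt\eps)$-type estimates in \pref{phi2} give that cross pairings and overlaps are $O(\sqrt\eps)$, so the family is asymptotically linearly independent. This yields
\begin{equation*}
\dim\bigl(\Ker\Delta_\eps\cap\Lambda^p\bigr)\;\leq\;\dim\bigl(\Ker\Delta_1\cap\Lambda^p\bigr)+\dim\bigl(\Ker\mcD_2\cap\Lambda^p\bigr).
\end{equation*}
The Mayer-Vietoris computation recalled at the beginning of the section gives $\dim\Ker\Delta_\eps\cap\Lambda^p=\dim H^p(M_1;\R)+\dim H^p(M_2;\R)$, and Hodge theory on $M_1$ gives $\dim\Ker\Delta_1\cap\Lambda^p=\dim H^p(M_1;\R)$; subtracting yields $\dim H^p(M_2;\R)\leq\dim\Ker\mcD_2\cap\Lambda^p$.

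Combining the two inequalities, the injection $\Ker\mcD_2\cap\Lambda^p\hookrightarrow H^p(M_2(1);\R)\cong H^p(M_2;\R)$ must be an isomorphism, proving the corollary. The main obstacle, which is really the content of everything preceding this corollary, is the asymptotic orthogonality/linear independence step: one must be sure that the two types of approximate kernel elements (transplanted from $\Ker\Delta_1$ versus built by $P_\eps$ from $\Ker\mcD_2$) do not overlap in the limit, and that each of them is individually an approximately orthonormal family, which is exactly what the estimates \eqref{contL2}, \eqref{D2}, and \lref{lem1} were designed to supply.
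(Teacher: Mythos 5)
Your argument is correct and follows essentially the same route as the paper: one inequality comes from the injection $\Ker\mcD_2\hookrightarrow H^p(M_2(1))\cong H^p(M_2)$ established after \pref{prop:D_2-elliptic}, and the reverse inequality comes from the Mayer--Vietoris computation of $\dim\Ker\Delta_\eps$ together with the fact that every harmonic form on $M_\eps$ limits to a nonzero pair in $\Ker\Delta_1\times\Ker\mcD_2$. You merely spell out the dimension count that the paper leaves implicit in its ``as a consequence.''
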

\subsection{convergence of the positive spectrum} The proof is made by induction.
We show first that $\lim\lambda_1(\eps)=\lambda_1$:
\begin{proof}We know by the Proposition \ref{Taka} that $\limsup\lambda_1(\eps)\leq\lambda_1$
and by Proposition \ref{B} that $\liminf\lambda_1(\eps)$ is in the positive spectrum of
$\Delta_1,$ and as a consequence $\liminf\lambda_1(\eps)\geq\lambda_1.$
\end{proof}
\noindent Now suppose that for all $j,\,1\leq j\leq k$ one has $\lim\lambda_j(\eps)=\lambda_j,$
we have to show that $\lim\lambda_{k+1}(\eps)=\lambda_{k+1}.$
\begin{proof}We know by Proposition \ref{Taka} that $\limsup\lambda_{k+1}(\eps)\leq\lambda_{k+1};$
let $\phi^{(1)}_\eps,\dots,\phi^{(k+1)}_\eps$ be an orthonormal family of eigenforms on $M(\eps)$:
$$\Delta_\eps\phi^{(j)}_{\eps}=\lambda_j(\eps)\phi^{(j)}_{\eps}
$$
and choose a sequence $\eps_l\to 0$ such that 
$$\lim_{l\to\infty}\lambda_{k+1}(\eps_l)=\liminf\lambda_{k+1}(\eps).$$
We apply to each $\phi^{(j)}_{\eps}$ the same decomposition as in Proposition \ref{psi},
this gives a family $\psi^{(1)}_{\eps},\dots,\psi^{(k+1)}_{\eps}$ bounded in $H^1(M_1)$ and
such that for each indice $j$ 
$$\lim_{\eps\to 0}\|\phi^{(j)}_{1,\eps}-\psi^{(j)}_{\eps}\|=0
$$
while, as in Corollary \ref{0phi2}
$$\lim_{\eps\to 0}(L_2)\phi^{(j)}_{2,\eps}=0.
$$
So, by extraction of a subsequence,we can suppose that 
$\psi^{(1)}_{\eps_l},\dots,\psi^{(k+1)}_{\eps_l}$  converge in $L_2(M_1)$ and weakly
in $H^1(M_1),$ the limit $\phi^{(1)},\dots,\phi^{(k+1)}$ is orthonormal and satisfies
$$ \forall j, \, 1\leq j\leq k \Delta_1\phi^{(j)}=\lambda_j\phi^{(j)} \hbox{ and }
\Delta_1\phi^{(k+1)}=\liminf\lambda_{k+1}(\eps)\phi^{(k+1)}.
$$
This shows that $\liminf\lambda_{k+1}(\eps)\geq\lambda_{k+1}$ and finishes the proof.
\end{proof}
%%%%%%%%%%%%%%%

%%%%%%%%%%%%%%%%%%%%%%%%%%%% Bibliography %%%%%%%%%%%%%%%%%%

\end{document}